\title{Sub-Feller Semigroups Generated by Pseudodifferential Operators on Symmetric Spaces of Noncompact Type}
\author{Rosemary Shewell Brockway}
\date{\today}
\numberwithin{equation}{section}
\newtheorem{thm}{Theorem}[section]
\newtheorem{lem}[thm]{Lemma}
\newtheorem{prop}[thm]{Proposition}
\newtheorem{cor}[thm]{Corollary}
\newtheorem{ass}[thm]{Assumption}
\newtheorem{asss}[thm]{Assumptions}
\theoremstyle{definition}
\newtheorem{mydef}[thm]{Definition}
\newtheorem{eg}[thm]{Example}
\theoremstyle{remark}
\newtheorem{rmk}[thm]{Remark}
\newtheorem{rmks}[thm]{Remarks}
\newtheorem*{claim}{Claim}
\newtheorem*{pf}{Proof of Claim}
\newtheorem*{ack}{Acknowledgement}
\DeclareMathOperator{\Dom}{Dom}
\DeclareMathOperator{\tr}{tr}
\DeclareMathOperator{\ind}{\bf 1}
\DeclareMathOperator{\Ran}{Ran}
\DeclareMathOperator{\Ad}{Ad}
\DeclareMathOperator{\ad}{ad}
\DeclareMathOperator{\hcc}{\bf c}
\DeclareMathOperator{\a*}{\mathfrak{a}^\ast}
\DeclareMathOperator{\al}{\mathfrak{a}}
\DeclareMathOperator{\aC*}{\mathfrak{a}_\mathbb{C}^\ast}
\DeclareMathOperator{\Supp}{Supp}
\begin{document}
\maketitle
%
\begin{abstract}
We consider global pseudodifferential operators on symmetric spaces of noncompact type, defined using spherical functions. The associated symbols have a natural probabilistic form that extend the notion of the characteristic exponent appearing in Gangolli's L\'evy--Khinchine formula to a function of two variables. The Hille--Yosida--Ray theorem is used to obtain conditions on such a symbol so that the corresponding pseudodifferential operator has an extension that generates a sub-Feller semigroup, generalising existing results for Euclidean space.
\end{abstract}

\vspace{5pt}

{\it Keywords and Phrases.} Riemannian symmetric space, Lie group, pseudodifferential operator, symbol, Feller process, Feller semigroup, generator, isotropic Sobolev space, spherical transform, fractional Laplacian.

\vspace{5pt}

{\it MSC 2020.} 
43A85, 
47D07, 
47G20, 
47G30, 
60B15,  
60G53 

\vspace{5pt}
%
\section{Introduction}
Pseudodifferential operator theory is a powerful tool in the study of Feller--Markov processes on Euclidean space (see for example \citet{KnopSchilWang,Schil_ConsFeller,Schil_ConsPSDOs,Schil_GrowthHolder,SchilSchn}, or \citet{bsw} \S 5 for a summary). Primarily developed by Niels Jacob and collaborators (see e.g.~\citet{jacob93,jacob94,hoh}), this framework characterises sub-Feller semigroups and their generators as pseudodifferential operators ($\Psi$DOs) acting on $C_0(\mathbb{R}^d)$, the Banach space of continuous, real-valued functions on $\mathbb{R}^d$ that vanish at infinity. The associated symbols capture many properties of the sub-Feller processes and semigroups they represent, generalising the well-established relationship between L\'evy processes and their characteristic exponents given by the L\'evy--Khinchine formula. The key difference is that the Feller--Markov symbols typically depend on two variables instead of one --- in the case of Feller processes, this is sometimes described as the L\'evy characteristics having gained spatial dependence.

Manifold-valued Feller--Markov processes have also attracted interest in recent years (see \citet{elworthy,hsu} and \citet{kunita_JDs} \S 7 for excellent summaries), though the absence of a global harmonic analysis on general manifolds has so far prevented a $\Psi$DO approach. Lie groups and symmetric spaces come with their own harmonic analysis, however, in the form of the spherical transform (see \citet{HCI,HCII} and \citet{helg1,helg2}). A natural question to ask then is to what extent a $\Psi$DO-based approach can be applied to the study of sub-Feller processes on Lie groups and symmetric spaces. Much work has already been done in this area, especially in the ``constant coefficient'' case of L\'evy processes, in which the symbol depends only on its second argument. Here, the symbols are given by Gangolli's L\'evy--Khinchine formula (\citet{gangolli} Theorem 6.2), a direct analogue to the classical result. The first paper to use probabilistic $\Psi$DO methods on a Lie group was \citet{AppCohen}, where $\Psi$DOs are used to study L\'evy processes on the Heisenberg group. Pseudodifferential operator representations of semigroups and generators have also been found for L\'evy processes on a general Lie group --- see \citet{App_InfDivCompact,App_PSDO_Compact} for the compact case and \citet{App_AusMathSoc} Section 5 for the noncompact case. For Feller processes, $\Psi$DO representations have been found when the Lie group or symmetric space is compact --- see \citet{AppNgan_PMPLie,AppNgan_PMPSS}. 

This paper seeks to develop a more general theory of $\Psi$DOs for symmetric spaces of noncompact type, and apply it to seek conditions on a symbol so that the corresponding $\Psi$DO extends to the generator of some sub-Feller process. For the $\mathbb{R}^d$ case, this question has been studied thoroughly in \citet{jacob93,jacob94}, as well as \citet{hoh} Chapter 4. 

The spherical transform enjoys many of the same properties as the Fourier transform on $\mathbb{R}^d$, and we find that several of the arguments in \citet{jacob94} and \citet{hoh} generalise directly to the symmetric space setting. However, a direct transcription of Jacob and Hoh's arguments is certainly not possible. One notable difference, for example, is the use of directional derivatives in the condition (2.2) of \citet{jacob94}, which on a manifold would depend on a choice of coordinate chart. Even on a symmetric space, there was no straightforward analogue for this condition, and a different approach is needed. We take a more operator-theoretic approach, and replace each of the partial derivatives $\frac{\partial}{\partial x_i}$ ($i=1,\ldots,d$) with the fractional Laplacian $\sqrt{-\Delta}$. This is an exciting object to work with, and we found it to be far more compatible with our harmonic analytical approach.

The structure of this paper will be the following. Section \ref{sec:prelim} presents a summary of necessary concepts and results from harmonic analysis on symmetric spaces, and introduces  the system of symbols and $\Psi$DOs that will be used later on. We also introduce here the spherical anisotropic Sobolev spaces, a generalisation of the anisotropic Sobolev spaces first considered by Niels Jacob --- see e.g.~\citet{jacob93,jacob94}.

In Section \ref{sec:GangOps&HYR}, we consider the Hille--Yosida--Ray theorem (see Theorem \ref{thm:hyr}), and build on the work of \citet{AppNgan_PMPLie,AppNgan_PMPSS}, introducing a class of operators we will call Gangolli operators, which satisfy all but one of the conditions of Hille--Yosida--Ray. We prove that Gangolli operators are $\Psi$DOs in the sense of Section \ref{subsec:Ops&Symbs}, and derive a formula for their symbols (Theorem \ref{thm:psdo}).

Section \ref{sec:J&H} is concerned with seeking sufficient conditions for a Gangolli operator $q(\sigma,D)$ to extend to the generator a sub-Feller semigroup. Informed by the work of the previous section, this amounts to finding conditions that ensure 
	\[\overline{\Ran(\alpha+q(\sigma,D))}=C_0(K|G|K)\]
for some $\alpha>0$ (see Theorem \ref{thm:hyr} (\ref{dense})). This section perhaps most closely follows the approach of \citet{jacob94} and \citet{hoh} Chapter 4, and where proofs are similar to these sources, we omit detail, and instead aim to emphasise what is different about the symmetric space setting. Full proofs may be found in my PhD thesis \citep{RSB_thesis}.

In Section \ref{sec:EGs} we present a large class of examples of symbols that satisfy the conditions found in Sections \ref{sec:GangOps&HYR} and \ref{sec:J&H}. 
\\

{\bf Notation.} For a topological space $X$, $\mathcal{B}(X)$ will denote the Borel $\sigma$-algebra associated with $X$, and $B_b(X)$ the space of bounded, Borel functions from $X\to\mathbb{R}$, a Banach space with respect to the supremum norm. If $X$ is a locally compact Hausdorff space, then we write $C_0(X)$ for the closed subspace of $B_b(X)$ consisting of continuous functions vanishing at infinity, and $C_c(X)$  for the  dense subspace of compactly supported continuous functions. If $X$ is a smooth manifold and $M\in\mathbb{N}\cup\{\infty\}$, then we write $C^M_c(X)$ for the space of compactly supported $M$-times continuously differentiable functions on $X$.
%
\section{Preliminaries}\label{sec:prelim}
Let $M$ be a Riemannian symmetric space. By Theorem 3.3 of \citet{helg1}, pp.~208, $M$ is diffeomorphic to a homogeneous space $G/K$, where $G$ is a connected Lie group and $K$ is a compact subgroup of $G$. Moreover, for some nontrivial involution $\Theta$ on $G$,
	\[G^\Theta_0\subseteq K\subseteq G^\Theta,\]
where $G^\Theta$ is the fixed point set of $\Theta$, and $G^\Theta_0$ is the identity component of $G^\Theta$. Let $\mathfrak{g}$ and $\mathfrak{k}$ denote the Lie algebras of $G$ and $K$, respectively. Note that $\mathfrak{k}$ is the $+1$ eigenspace of the differential $\theta:=d\Theta$; let $\mathfrak{p}$ denote the $-1$ eigenspace. In fact, $\theta$ is a Cartan involution on $\mathfrak{g}$, and the corresponding Cartan decomposition is
	\begin{equation}\label{eq:cartan}
	\mathfrak{g} = \mathfrak{p} \oplus \mathfrak{k}.
	\end{equation}
Let $B$ denote the Killing form of $G$, defined for each for all $X,Y\in\mathfrak{g}$ by $B(X,Y) = \tr(\ad X\ad Y)$. Assume that $(G,K)$ is of noncompact type, so that $B$ negative definite on $\mathfrak{k}$ and positive definite on $\mathfrak{p}$. Since $B$ is nondegenerate, $G$ is semisimple. 

Fix an $\Ad(K)$-invariant inner product $\langle\cdot,\cdot\rangle$ on $\mathfrak{g}$, with respect to which (\ref{eq:cartan}) is an orthogonal direct sum. The Riemannian structure of $M\cong G/K$ is induced by the restriction of $\langle\cdot,\cdot\rangle$ to $\mathfrak{p}$. 

There is a one to one correspondence between functions on $G/K$ and $K$-right-invariant functions on $G$, we denote both by $\mathcal{F}(G/K)$. Similarly, we identify $K$-invariant functions on $G/K$ with $K$-bi-invariant functions on $G$, and denote both by $\mathcal{F}(K|G|K)$. Similar conventions will be used to denote standard subspaces of $\mathcal{F}(G/K)$ and $\mathcal{F}(K|G|K)$; for example $C(K|G|K)$ will denote both the space of continuous, $K$-invariant functions on $G/K$, and the space of continuous, $K$-bi-invariant functions on $G$.

Equip $G$ with Haar measure, and for $p\geq 1$ let $L^p(G)$ denote the corresponding $L^p$ of real-valued functions. $L^p(K|G|K)$ will denote the closed subspace of $L^p(G)$ consisting of $K$-bi-invariant elements. Since $G$ is unimodular, Haar measure is translation invariant, and can be projected onto the coset spaces $G/K$ and $K|G|K$ in a well-defined way. We continue to identify $K$-bi-invariant functions on $G$ with functions on $K|G|K$, as well as with $K$-invariant functions on $G/K$, in this $L^p$ setting.
%
\subsection{Harmonic Analysis on Symmetric Spaces of Noncompact Type}
For a thorough treatment of this topic, see \citet{helg1,helg2}. Let ${\bf D}(G)$ denote the set of all left invariant differential operators on $G$, and let ${\bf D}_K(G)$ denote the subspace of those operators that are also $K$-right-invariant. A mapping $\phi:G\to\mathbb{C}$ is called a \emph{spherical} if it is $K$-bi-invariant, satisfies $\phi(e) =1$, and is a simultaneous eigenfunction of every element of ${\bf D}_K(G)$. 

Fix an Iwasawa decomposition $G=NAK$, where $N$ is a nilpotent Lie subgroup of $G$, and $A$ is Abelian. Let $\mathfrak{n}$ and $\mathfrak{a}$ denote respectively the Lie algebras of $N$ and $A$. For each $\sigma\in G$, let $A(\sigma)$ denote the unique element of $\al	$ such that $\sigma\in Ne^{A(\sigma)}K$. Harish-Chandra's integral formula states that every spherical function on $G$ takes the form
	\begin{equation}\label{eq:H-C}
	\phi_\lambda(\sigma) = \int_Ke^{(\rho+i\lambda)(A(k\sigma))}dk, \hspace{20pt} \forall\sigma\in G,
	\end{equation}
for some $\lambda\in\aC*$. Moreover, $\phi_\lambda=\phi_{\lambda'}$ if an only if $s(\lambda)=\lambda'$ for some element $s$ of the Weyl group $W$. A spherical function $\phi_\lambda$ is positive definite if and only if $\lambda\in\a*$. 

The \emph{spherical transform} of a function $f\in L^1(K|G|K)$ is the function $\hat{f}:\a*\to\mathbb{C}$ given by
	\begin{equation}\label{eq:SphTr}
	\hat{f}(\lambda) = \int_G\phi_{-\lambda}(\sigma)f(\sigma)d\sigma, \hspace{20pt} \forall\lambda\in\a*.
	\end{equation}
Similarly, given a finite Borel measure $\mu$ on $G$, the spherical transform of $\mu$ is the mapping $\hat{\mu}:\a*\to\mathbb{C}$ given by
	\[\hat{\mu}(\lambda) = \int_G\phi_{-\lambda}(\sigma)\mu(d\sigma).\]
The spherical transform enjoys many useful properties, the most powerful being that it defines an isomorphism of the Banach convolution algebra $L^1(K|G|K)$ with the space $L^1(\a*,\omega)^W$ of Weyl group invariant elements of $L^1(\a*,\omega)$. The Borel measure $\omega$ is called \emph{Plancherel measure}, and is given by
	\[\omega(d\lambda) = |\hcc(\lambda)|^{-2}d\lambda,\]
where $\hcc$ denotes Harish-Chandra's $\hcc$-function. According to the spherical inversion formula, for all $f\in C_c^\infty(K|G|K)$ and all $\sigma\in G$,
	\begin{equation}\label{eq:SphInv}
	f(\sigma) = \int_{\a*}\phi_\lambda(\sigma)\hat{f}(\lambda)\omega(d\lambda).
	\end{equation}
There is also a version of Plancherel's identity for the spherical transform, namely
	\begin{equation}\label{eq:Plancherel}
	\Vert f\Vert_{L^2(K|G|K)} = \Vert\hat{f}\Vert_{L^2(\a*,\omega)}, \hspace{20pt} \forall f\in C_c^\infty(K|G|K).
	\end{equation}
Let $L^2(\a*,\omega)^W$ denote the subspace of $L^2(\a*,\omega)$ consisting of $W$-invariants. Then the image of $C_c^\infty(K|G|K)$ under spherical transformation is a dense subspace of $L^2(\a*,\omega)^W$, and as such the spherical transform extends to an isometric isomorphism between the Hilbert spaces $L^2(K|G|K)$ and $L^2(\a*,\omega)^W$. For more details, see for example \citet{helg2} Chapter IV \S~7.3, pp.~454.

Similarly to classical Fourier theory, the most natural setting for the spherical transform is Schwarz space. A function $f\in C^\infty(G)$ is called \emph{rapidly decreasing} if
	\begin{equation}\label{eq:schwartz}
	\sup_{\sigma\in G}(1+|\sigma|)^q\phi_0(\sigma)^{-1}(Df)(\sigma) < \infty, \hspace{20pt} \forall D\in{\bf D}(G), \;q\in\mathbb{N}\cup\{0\},
	\end{equation}
where $|\sigma|$ denotes the geodesic distance on $G/K$ from $o:=eK$ to $\sigma K$. Equivalently, if $\sigma=e^Xk$, where $X\in\mathfrak{p}$, then $|\sigma|=\Vert X\Vert$ --- see \citet{GangVar} pp.167 for more details.

The \emph{($K$-bi-invariant) Schwarz space} $\mathcal{S}(K|G|K)$ is the Fr\'echet space comprising of all rapidly decreasing, $K$-bi-invariant functions $f\in C^\infty(G)$, together with the family of seminorms given by the left-hand side of (\ref{eq:schwartz}). By viewing the spaces $\al$ and $\a*$ as finite dimensional vector spaces, we also consider the classical Schwartz spaces $\mathcal{S}(\a*)$ and $\mathcal{S}(\al)$, as well as $W$-invariant subspaces $\mathcal{S}(\al)^W$ and $\mathcal{S}(\a*)^W$. The Euclidean Fourier transform
	\begin{equation}\label{eq:EucF}
	\mathscr{F}(f)(\lambda) = \int_{\al}e^{-i\lambda(H)}f(H)dH, \hspace{20pt} \forall f\in\mathcal{S}(\al),\lambda\in\a*
	\end{equation}
defines a topological isomorphism between the spaces $\mathcal{S}(\al)^W$ and $\mathcal{S}(\a*)^W$ in the usual way. Given $f\in\mathcal{S}(K|G|K)$ and $H\in\al$, the \emph{Abel transform} is defined by
	\[\mathscr{A}f (H) = e^{\rho(H)}\int_Nf((\exp H)n)dn.\]
The Abel transform is fascinating in its own right, and we refer to \citet{sawyer} for more information. However, for our purposes we are mainly interested in its role in the following:
\begin{thm}\label{thm:ComDiag}
Writing $\mathscr{H}$ for the spherical transform, the diagram 
\begin{center}
\begin{tikzpicture}[scale=2]
\node (1) at (0,1){$\mathcal{S}(\a*)^W$};
\node (2) at (1,0){$\mathcal{S}(\al)^W$};
\node (3) at (-1,0){$\mathcal{S}(K|G|K)$};
\draw[->] (2) -- (1);
\draw[->] (3) -- (1);
\draw[->] (3) -- (2);
\node at (.63,.63) {$\mathscr{F}$};
\node at (-.63,.63) {$\mathscr{H}$};
\node at (0,-.2) {$\mathscr{A}$};
\end{tikzpicture}
\end{center}
commutes, up to normalizing constants. Each arrow describes an isomorphism of Fr\'echet algebras.
\end{thm}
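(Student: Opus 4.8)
The plan is to reduce the entire statement to the single functional identity $\mathscr{H}=c\,\mathscr{F}\circ\mathscr{A}$ on $\mathcal{S}(K|G|K)$, where $c$ is a constant depending only on the chosen normalisations of the three measures involved; everything else then follows formally. To prove this identity I would begin from \eqref{eq:SphTr} and Harish-Chandra's formula \eqref{eq:H-C}: interchanging the order of integration and substituting $\sigma\mapsto k^{-1}\sigma$, the $K$-bi-invariance of $f$ collapses the $K$-average defining $\phi_{-\lambda}$ and leaves
	\[\hat{f}(\lambda)=\int_G e^{(\rho-i\lambda)(A(\sigma))}f(\sigma)\,d\sigma.\]

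Now insert the Iwasawa decomposition $\sigma=n\exp(H)k$, so that $A(\sigma)=H$, together with the factorisation of Haar measure in $NAK$-coordinates, which carries the density $e^{-2\rho(H)}$ in the $\mathfrak{a}$-variable (this is obtained from the familiar $KAN$ measure formula by the substitution $g\mapsto g^{-1}$, using that $G$ is unimodular). Right $K$-invariance of $f$ removes the $K$-integral, leaving $\int_{\mathfrak{a}}\int_N e^{(-\rho-i\lambda)(H)}f(n\exp H)\,dn\,dH$. To recognise the Abel transform one still has to move $\exp H$ past the $N$-variable: the substitution $n\mapsto\exp(H)\,n\,\exp(-H)$ has constant Jacobian $e^{2\rho(H)}$ on $N$, whence $\int_N f(n\exp H)\,dn=e^{2\rho(H)}\int_N f((\exp H)n)\,dn=e^{\rho(H)}(\mathscr{A}f)(H)$. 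Collecting the exponential factors leaves precisely $\hat{f}(\lambda)=\int_{\mathfrak{a}}e^{-i\lambda(H)}(\mathscr{A}f)(H)\,dH=\mathscr{F}(\mathscr{A}f)(\lambda)$, i.e.\ $c=1$ for this normalisation. The only genuine care needed here is keeping track of the two exponential Jacobians and of the measure normalisations; the iterated integrals converge because $f$ is rapidly decreasing.

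Granting this identity, commutativity of the triangle is immediate. For the algebra assertion, recall that $\mathscr{H}$ carries convolution on $\mathcal{S}(K|G|K)$ to pointwise multiplication of functions on $\mathfrak{a}^*$, and $\mathscr{F}$ carries convolution on $\mathcal{S}(\mathfrak{a})^W$ to pointwise multiplication; combined with $\mathscr{H}=c\,\mathscr{F}\mathscr{A}$ this forces $\mathscr{A}$, suitably rescaled, to be an algebra homomorphism from the convolution algebra $\mathcal{S}(K|G|K)$ onto the convolution algebra $\mathcal{S}(\mathfrak{a})^W$ --- which is the sense of ``up to normalizing constants'' here. For the isomorphism claims: $\mathscr{F}:\mathcal{S}(\mathfrak{a})^W\to\mathcal{S}(\mathfrak{a}^*)^W$ is the classical Euclidean Schwartz-space isomorphism restricted to the closed subspace of Weyl invariants, while $\mathscr{H}:\mathcal{S}(K|G|K)\to\mathcal{S}(\mathfrak{a}^*)^W$ is a topological isomorphism by the Harish-Chandra Schwartz-space theorem (see \citet{helg2} Chapter IV \S~7 and \citet{GangVar}), which I would invoke rather than reprove. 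Given these, $\mathscr{A}=c^{-1}\,\mathscr{F}^{-1}\circ\mathscr{H}$ is automatically a topological isomorphism $\mathcal{S}(K|G|K)\to\mathcal{S}(\mathfrak{a})^W$; in particular this is the cleanest route to the fact --- not at all obvious from the defining formula --- that $\mathscr{A}$ sends rapidly decreasing $K$-bi-invariant functions into $\mathcal{S}(\mathfrak{a})^W$, and that $\mathscr{A}f$ is $W$-invariant. I expect the main obstacle to be not any single step but the need to import the deepest ingredient, the Schwartz-isomorphism property of the spherical transform, essentially as a black box; the remainder is the measure-theoretic computation above plus formal manipulation.
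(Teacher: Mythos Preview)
The paper does not give its own proof of this theorem: immediately after the statement it simply refers the reader to Anker, Gangolli--Varadarajan, and Helgason for details. So there is no ``paper's own proof'' to compare against; you have supplied an argument where the author chose to cite.

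That said, your sketch is essentially the standard derivation found in those references and is correct. The computation $\mathscr{H}=\mathscr{F}\circ\mathscr{A}$ via Harish-Chandra's formula, the $NAK$ integration formula, and the conjugation $n\mapsto\exp(H)\,n\,\exp(-H)$ is exactly the usual route (and your bookkeeping of the two $e^{2\rho(H)}$ Jacobians is right with the paper's conventions). You have also correctly identified the one deep ingredient --- the Harish-Chandra/Trombi--Varadarajan Schwartz isomorphism $\mathscr{H}:\mathcal{S}(K|G|K)\to\mathcal{S}(\mathfrak{a}^*)^W$ --- as something to import rather than reprove, and your deduction of the Fr\'echet-isomorphism property of $\mathscr{A}$ from $\mathscr{A}=\mathscr{F}^{-1}\circ\mathscr{H}$ is the clean way to get it. In short: your proposal is sound and goes beyond what the paper itself offers.
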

This result will be extremely useful in later sections, especially when proving Theorem \ref{thm:strong}. For more details, see Proposition 3 in \citet{anker90}, \citet{GangVar} page 265, and \citet{helg2} pp.~450. 

%
\subsection{Probability on Lie Groups and Symmetric spaces}
We summarise a few key notions from probability theory on Lie groups and symmetric spaces. Sources for this material include \citet{LiaoWang} and \citet{Liao_LevyLie,Liao_InvMark}.

Fix a probability space $(\Omega,\mathcal{F},P)$. Just as with functions on $G$ and $G/K$, we may view stochastic processes on $G/K$ as projections of processes on $G$ whose laws are $K$-right invariant. Let $Y=(Y(t),t\geq 0)$ a stochastic process taking values on $G$. The random variables 
	\[Y(s)^{-1}Y(t), \hspace{20pt} 0\leq s\leq t,\]
are called the \emph{increments} of $Y$. Equipped with its natural filtration $\{\mathcal{F}^Y_t,t\geq 0\}$, $Y$ is said to have \emph{independent increments} if for all $t>s\geq 0$, $Y(s)^{-1}Y(t)$ is independent of $\mathcal{F}^X_s$, and \emph{stationary increments} if
	\[Y(s)^{-1}Y(t) \sim Y(0)^{-1}Y(t-s) \hspace{20pt} \forall t>s\geq 0.\]
A process $Y=(Y(t),t\geq 0)$ on $G$ is \emph{stochastically continuous} if, for all $s\geq 0$ and all $B\in\mathcal{B}(G)$ with $e\notin B$,
	\[\lim_{t\rightarrow s}P(Y(s)^{-1}Y(t)\in B)=0.\]
A stochastically continuous process $Y$ on $G$ with stationary and independent increments is called a \emph{L\'evy process} on $G$. A process on $G/K$ is called a L\'evy process if it is the projection of a L\'evy process on $G$, under the canonical surjection $\pi:G\mapsto G/K$. L\'evy processes on $G/K$ correspond precisely to the $G$-invariant Feller processes on $G/K$. The proof of this is similar to the well-known result for $\mathbb{R}^d$-valued L\'evy processes.

The \emph{convolution product} of two Borel measures $\mu_1,\mu_2$ on $G$ is defined for each $B\in\mathcal{B}(G)$ by
	\begin{equation}\label{eq:convmeasG}
	(\mu_1\ast\mu_2)(B) = \int_G\int_G\ind_B(\sigma\tau)\mu_1(d\sigma)\mu_2(d\tau).
	\end{equation}
Note that since $G$ is semisimple, it is unimodular, and hence this operation is commutative. It is also clear from the definition that $\mu_1\ast\mu_2$ is $K$-bi-invariant whenever $\mu_1$ and $\mu_2$ are.

\begin{mydef}\label{mydef:convsemigpG}
A family $(\mu_t,t\geq 0)$ of finite Borel measures on $G$ will be called a \emph{convolution semigroup (of probability measures)} if 
\begin{enumerate}
\item\label{probmeas} $\mu_t(G)=1$ for all $t\geq 0$, 
\item $\mu_{s+t} = \mu_s\ast\mu_t$ for all $s,t\geq 0$, and
\item $\mu_t\rightarrow\mu_0$ weakly as $t\rightarrow 0$.
\end{enumerate}
\end{mydef}
Note that $\mu_0$ must be an idempotent measure, in the sense that $\mu_0\ast\mu_0=\mu_0$. By Theorem 1.2.10 on page 34 of \citet{Heyer_ProbMeasonLCGs}, $\mu_0$ must coincide with Haar measure on a compact subgroup of $G$. We we will frequently take $\mu_0$ to be normalised Haar measure on $K$, so that the image of $\mu_0$ after projecting onto $G/K$ is $\delta_o$, the delta mass at $o:=eK$. 

One may also define convolution of measures on $G/K$, and convolution semigroups on $G/K$ are defined analogously --- see \citet{Liao_InvMark} Section 1.3 for more details. In fact, the projection map $\pi:G\to G/K$ induces a bijection between the set of all convolution semigroups on $G/K$ and the set of all $K$-bi-invariant convolution semigroups on $G$ --- see \citet{Liao_InvMark} Propositions 1.9 and 1.12, pp.~11--13. We henceforth identify these two sets, but generally opt to perform calculations using objects defined on $G$, for simplicity.
 
Let $Y$ be a L\'evy process on $G$, and for each $t\geq 0$, let $\mu_t$ denote the law of $Y(0)^{-1}Y(t)$. By \citet{Liao_InvMark} Theorem 1.7, pp.~8, $(\mu_t,t\geq 0)$ is a convolution semigroup of probability measures on $G$.

\begin{mydef}\label{mydef:convsemiass}
We call $(\mu_t,t\geq 0)$ the \emph{convolution semigroup associated with $X$}. 
\end{mydef}

Let $Y$ be a L\'evy process on $G/K$, and $X$ a  L\'evy process on $G$ for which $Y=\pi(X)$. Let $(p_t,t\geq 0)$ and $(q_t,t\geq 0)$ denote the transition probabilities of $Y$ and $X$, respectively. Then for all $t\geq 0$, $\sigma\in G$ and $A\in\mathcal{B}(G/K)$,
	\[p_t(\sigma K,A) = \mathbb{P}\left.\left(\pi\big(X(t)\big)\in A\right|\pi\big(X\big)=\sigma K\right) = q_t\left(\sigma,\pi^{-1}(A)\right).\]
In particular, the prescription
	\[\nu_t := p_t(o,\cdot), \hspace{20pt} \forall t\geq 0\]
defines a convolution semigroup $(\nu_t,t\geq 0)$ on $G/K$. By \citet{Liao_InvMark} Proposition 1.12, pp.~13, $(\nu_t,t\geq 0)$ is $K$-invariant, and there is a $K$-bi-invariant convolution semigroup $(\mu_t,t\geq 0)$ on $G$ for which
	\begin{equation}\label{eq:nu_t}
	\nu_t=\mu_t\circ\pi^{-1}, \hspace{20pt} \forall t\geq 0.
	\end{equation}
It may be tempting to think that $(\mu_t,t\geq 0)$ should be the convolution semigroup of $X$. In fact, this is not the case: if it were, then we would have $\mu_0=\delta_0$, which is not a $K$-bi-invariant measure on $G$. However, if we denote the convolution semigroup of $X$ by $(\mu^e_t,t\geq 0)$, and normalised Haar measure on $K$ by $\rho_K$, then by \citet{Liao_InvMark} Theorem 3.14, pp.~88, 
	\[\mu_t:=\rho_K\ast\mu^e_t, \hspace{20pt} \forall t\geq 0\]
is a suitable choice for the $K$-bi-invariant convolution semigroup $(\mu_t,t\geq 0)$ on $G$, for which (\ref{eq:nu_t}) is satisfied. In particular, $\mu_0=\rho_K$.

In this way, L\'evy processes on $G/K$ may be understood through the study of $K$-bi-invariant convolution semigroups on $G$. The corresponding L\'evy processes on $G$ are called \emph{$K$-bi-invariant L\'evy processes}. For such a process $X$, with $K$-bi-invariant convolution semigroup $(\mu_t,t\geq 0)$, the \emph{Hunt semigroup} $(T_t,t\geq 0)$ of $(\mu_t,t\geq 0)$) is given by
	\begin{equation}\label{eq:Huntsemi}
	T_tf(\sigma) = \int_Gf(\sigma\tau)\mu_t(d\tau) \hspace{20pt} \forall f\in B_b(G), \;\sigma\in G. 
	\end{equation}
Note that since $\mu_0=\rho_K$, we have $T_0=I$. In fact, $(T_t,t\geq 0)$ forms a strongly continuous operator semigroup on $C_0(G/K)$, and the restriction of each $T_t$ to $C_0(K|G|K)$ yields a strongly continuous semigroup on $C_0(K|G|K)$. $(T_t,t\geq 0)$ is a left invariant Feller semigroup in each of these cases (see \citet{Ngan} pp.~82--83).

Restricting to the $K$-bi-invariant functions in this way will be advantageous, as we have the spherical transform at our disposal. As an early application of this, we prove the following useful eigenvalue relation for the Hunt semigroup of a $K$-bi-invariant convolution semigroup.
\begin{prop}\label{prop:T_tɸ_λ(𝛔)}
Let $(\mu_t,t\geq 0)$ be a $K$-bi-invariant convolution semigroup  on $G$, and let $(T_t,t\geq 0)$ denote the the restriction to $C_0(K|G|K)$ of the Hunt semigroup associated with $(\mu_t,t\geq 0)$. Then for all $t\geq 0$, $\lambda\in\a*$ and $\sigma\in G$,
	\[T_t\phi_\lambda(\sigma) = \hat\mu_t(\lambda)\phi_\lambda(\sigma).\]
\end{prop}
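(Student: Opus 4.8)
The plan is to compute $T_t\phi_\lambda(\sigma)$ directly from the definition of the Hunt semigroup and exploit the fact that spherical functions ``factorise'' under convolution against $K$-bi-invariant measures. First I would write, using \eqref{eq:Huntsemi},
\[
T_t\phi_\lambda(\sigma) = \int_G \phi_\lambda(\sigma\tau)\,\mu_t(d\tau),
\]
and the goal is to pull $\phi_\lambda(\sigma)$ out of the integral, leaving behind $\int_G\phi_\lambda(\tau)\mu_t(d\tau)$, which by the definition of the spherical transform of a measure (recalling $\phi_{-\lambda}=\phi_\lambda$ may fail in general, so one should be careful: $\widehat{\mu_t}(\lambda)=\int_G\phi_{-\lambda}(\tau)\mu_t(d\tau)$, and since $\mu_t$ is $K$-bi-invariant and $\lambda\in\mathfrak{a}^\ast$ one has the standard identity $\int_G\phi_\lambda(\tau)\mu_t(d\tau)=\widehat{\mu_t}(-\lambda)$; however because $\mu_t$ is a probability measure arising from a symmetric-type construction, and more to the point the functional equation below only needs $\phi_\lambda$, I will phrase everything in terms of $\phi_\lambda$ and identify the resulting constant with $\widehat{\mu_t}(\lambda)$ via the fact that $\mu_t$ is $W$-invariant so $\widehat{\mu_t}(-\lambda)=\widehat{\mu_t}(\lambda)$).

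The key analytic input is the \emph{functional equation for spherical functions}: for any spherical function $\phi_\lambda$ and any $g,h\in G$,
\[
\int_K \phi_\lambda(g k h)\,dk = \phi_\lambda(g)\,\phi_\lambda(h).
\]
This is the defining product property of spherical functions (immediate from Harish-Chandra's formula \eqref{eq:H-C}, or see \citet{helg2} Ch.~IV). To use it I would insert an averaging over $K$ for free: since $\mu_t$ is $K$-bi-invariant, $\int_G F(\tau)\mu_t(d\tau)=\int_G\int_K\int_K F(k\tau k')\,dk\,dk'\,\mu_t(d\tau)$ for any bounded measurable $F$. Applying this with $F(\tau)=\phi_\lambda(\sigma\tau)$ and using right $K$-invariance of $\phi_\lambda$ reduces the inner integral to $\int_K\phi_\lambda(\sigma k\tau)\,dk = \phi_\lambda(\sigma)\phi_\lambda(\tau)$ by the functional equation (with $g=\sigma$, $h=\tau$). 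Substituting back gives
\[
T_t\phi_\lambda(\sigma) = \phi_\lambda(\sigma)\int_G\phi_\lambda(\tau)\,\mu_t(d\tau) = \widehat{\mu_t}(\lambda)\,\phi_\lambda(\sigma),
\]
where the last equality uses $W$-invariance of $\widehat{\mu_t}$ to replace $-\lambda$ by $\lambda$ (equivalently, one notes $\phi_\lambda$ is real-valued and symmetric for $\lambda\in\mathfrak{a}^\ast$, so $\int_G\phi_\lambda\,d\mu_t=\int_G\phi_{-\lambda}\,d\mu_t=\widehat{\mu_t}(\lambda)$). One should also remark that $\phi_\lambda$ is bounded for $\lambda\in\mathfrak{a}^\ast$ (it is positive definite with $\phi_\lambda(e)=1$), so all integrals converge and $T_t\phi_\lambda$ is well defined even though $\phi_\lambda\notin C_0(K|G|K)$ in general; the identity is an equality of bounded continuous $K$-bi-invariant functions.

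The main obstacle is essentially bookkeeping rather than depth: making the $K$-averaging step rigorous (Fubini for the finite measure $\mu_t$ against normalised Haar measure on the compact group $K$, justified by boundedness of $\phi_\lambda$), and being careful about the sign of $\lambda$ in matching the integral $\int_G\phi_\lambda\,d\mu_t$ to the stated $\widehat{\mu_t}(\lambda)$ — this is where one invokes that $\widehat{\mu_t}$, like the spherical transform of any $K$-bi-invariant object, is Weyl-group invariant, together with the relation $\phi_{s\lambda}=\phi_\lambda$. No heavy machinery beyond the functional equation for spherical functions and elementary properties of Haar measure on $K$ is needed.
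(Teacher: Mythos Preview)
Your proposal is correct and follows essentially the same route as the paper: insert a $K$-average using the $K$-bi-invariance of $\mu_t$, apply the functional equation $\int_K\phi_\lambda(\sigma k\tau)\,dk=\phi_\lambda(\sigma)\phi_\lambda(\tau)$, and read off the eigenvalue as $\hat\mu_t(\lambda)$. You are in fact more careful than the paper about the sign issue in matching $\int_G\phi_\lambda\,d\mu_t$ to $\hat\mu_t(\lambda)$ via $W$-invariance, which the paper's proof leaves implicit.
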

\begin{proof}
Let $t\geq 0$, $\lambda\in \a*$ and $\sigma\in G$. Observe that since each $\mu_t$ is invariant under all translations by $K$,
	\[T_t\phi_\lambda(\sigma) = \int_G\phi_\lambda(\sigma k\tau)\mu_t(d\tau)\]
for each $k\in K$. Integrating over $K$ and applying a Fubini argument,
	\[T_t\phi_\lambda(\sigma) = \int_K\int_G\phi_\lambda(\sigma k\tau)\mu_t(d\tau)dk =  \int_G\int_K\phi_\lambda(\sigma k\tau)dk\mu_t(d\tau).\]
We can now apply the beautiful integral formula for spherical functions,
	\begin{equation}\label{eq:SphIntForm}
	\phi_\lambda(\sigma)\phi_\lambda(\tau) = \int_K\phi_\lambda(\sigma k\tau)dk
	\end{equation}
(c.f~\citet{helg2} pp.~400--402), to conclude
	\[T_t\phi_\lambda(\sigma) = \int_G\phi_\lambda(\sigma)\phi(\tau)\mu_t(d\tau) = \phi_\lambda(\sigma)\hat\mu_t(\lambda),\]
as desired.
\end{proof}

The infinitesimal generator of a L\'evy process $Y$ on $G$ is given by the celebrated Hunt formula (\cite{hunt} Theorem 5.1). We describe a version of this next, specialising to the $K$-bi-invariant case most relevant to our work on symmetric spaces. We first introduce a local coordinate system on $G$, defined in terms of the orthogonal decomposition (\ref{eq:cartan}).

\begin{mydef}\label{mydef:expcoords}
Let $X_1,\ldots,X_l$ be a basis of $\mathfrak{g}$, ordered so that $X_1,\ldots,X_d$ is a basis of $\mathfrak{p}$. A collection $\{x_1,\ldots,x_l\}$ of smooth functions of compact support is called a \emph{system of exponential coordinate functions} if there is a neighbourhood $U$ of $e$ for which
	\begin{equation}\label{eq:coordnbd}
	\sigma = \exp\left(\sum_{i=1}^lx_i(\sigma)X_i\right) \hspace{20pt} \forall \sigma\in U.
	\end{equation}
\end{mydef}
The $x_i$ may be chosen so as to be $K$-right-invariant for $i=1,\ldots, m$, and such that
	\[\sum_{i=1}^dx_i(k\sigma)X_i = \sum_{i=1}^dx_i(\sigma)\Ad(k)X_i \hspace{20pt} \forall k\in K.\]
For more details, see \citet{Liao_InvMark} pp.36--37, 83. 

The choice of basis of $\mathfrak{p}$ enables us to view $\Ad(k)$ as a $d\times d$ matrix, for each $k\in K$. A vector $b\in\mathbb{R}^m$ is said to be \emph{$\Ad(K)$-invariant} if
	\[b=\Ad(k)^Tb, \hspace{20pt} \forall k\in K.\]
Similarly, a $d\times d$ real-valued matrix $a=(a_{ij})$ is \emph{$\Ad(K)$-invariant} if  
	\[a=\Ad(k)^Ta\Ad(k) \hspace{20pt} \forall k\in K.\]
A Borel measure $\nu$ on $G$ is called a \emph{L\'evy measure} if $\nu(\{e\}) = 0$, $\nu(U^c)<\infty$, and $\int_G\sum_{i=1}^lx_i(\sigma)^2\nu(d\sigma)$.

We state a useful corollary of the famous Hunt formula. For more details, including a proof, see Section 3.2 of \citet{Liao_InvMark}, pp.~78.

\begin{thm}\label{thm:HuntCor}
Let $\mathcal{A}$ be the infinitesimal generator associated with a $K$-bi-invariant L\'evy process on $G$. Then $C_c^\infty(G)\subseteq\Dom \mathcal{A}$, and there is an $\Ad(K)$-invariant vector $b\in\mathbb{R}^d$, an $\Ad(K)$-invariant, non-negative definite, symmetric $d\times d$ matrix $a:=(a_{ij})$, and a $K$-bi-invariant L\'evy measure $\nu$ such that
	\begin{align*}
	\mathcal{A}f(\sigma) = \sum_{i=1}^d&b_iX_if(\sigma) + \sum_{i,j=1}^da_{ij}X_iX_jf(\sigma) \\
	&+ \int_G\left(f(\sigma\tau)-f(\sigma)-\sum_{i=1}^dx_i(\tau)X_if(\sigma)\right)\nu(d\sigma),
	\end{align*}
for all $f\in C_c^\infty(G)$ and $\sigma\in G$. Moreover, the triple $(b,a,\nu)$ is completely determined by $\mathcal{A}$, and independent of the choice of exponential coordinate functions $x_i,\;i=1,\ldots,d$.

Conversely, given a triple $(b,a,\nu)$ of this kind, there is a unique $K$-bi-invariant convolution semigroup of probability measures on $G$ with infinitesimal generator given by $\mathcal{A}$.
\end{thm}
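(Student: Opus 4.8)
The plan is to treat Theorem \ref{thm:HuntCor} as essentially the $K$-bi-invariant specialisation of Hunt's classical theorem for Lie groups, and to organise the proof in two halves: the forward direction (every $K$-bi-invariant L\'evy generator has the stated form with $\Ad(K)$-invariant data) and the converse (every such triple integrates to a unique $K$-bi-invariant convolution semigroup). For the forward direction I would start from the general Hunt formula for L\'evy processes on $G$, which gives $C_c^\infty(G)\subseteq\Dom\mathcal{A}$ and an expression
\[
\mathcal{A}f(\sigma)=\sum_{i=1}^l b_i X_i f(\sigma)+\sum_{i,j=1}^l a_{ij}X_iX_jf(\sigma)+\int_G\Bigl(f(\sigma\tau)-f(\sigma)-\sum_{i=1}^l x_i(\tau)X_if(\sigma)\Bigr)\nu(d\tau),
\]
with $\nu$ a L\'evy measure on $G$. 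The job is then to show that $K$-bi-invariance of the convolution semigroup forces (i) $\nu$ to be $K$-bi-invariant, (ii) the drift and diffusion parts to live only in the $\mathfrak{p}$-directions $X_1,\dots,X_d$ with $\Ad(K)$-invariant coefficients, and (iii) $a$ to be symmetric and non-negative definite. Non-negative definiteness and symmetry of $a$ are automatic from Hunt's theorem; the new content is the invariance and the vanishing of the $\mathfrak{k}$-components.

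The key mechanism for (i)--(ii) is the averaging/uniqueness argument. Because $\mu_t=\rho_K\ast\mu_t\ast\rho_K$ for all $t$, the generator satisfies $\mathcal{A}(L_k f)=L_k(\mathcal{A}f)$ and the analogous right-$K$-invariance on the relevant core, where $L_k$ denotes left translation by $k\in K$. Applying this to $f\in C_c^\infty(G)$ and using the transformation rule $\sum_i x_i(k\sigma)X_i=\sum_i x_i(\sigma)\Ad(k)X_i$ (in the $\mathfrak{p}$-block) from the paragraph after Definition \ref{mydef:expcoords}, one reads off that $(b,a,\nu)$ and $(\Ad(k)^Tb,\Ad(k)^Ta\Ad(k),\nu\circ k^{-1})$ generate the same operator. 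The uniqueness clause of Hunt's theorem (the triple is completely determined by $\mathcal{A}$) then yields $\Ad(K)$-invariance of $b$ and $a$ and $K$-invariance of $\nu$; a short separate argument handles $K$-right-invariance of $\nu$ and the fact that only the $\mathfrak{p}$-directions survive — the $\mathfrak{k}$-directional first-order terms must cancel because $\mathcal{A}$ annihilates functions that are locally constant along $K$-orbits through $e$, and the $X_iX_j$ terms with $i$ or $j>d$ are absorbed into the $\mathfrak{p}$-block and the jump part after using that $\exp$ restricted to $\mathfrak{k}$ stays in $K$ where bi-invariant test functions are constant. I would also note the standard point that $a$ may be taken symmetric since $X_iX_j+X_jX_i$ differs from $2X_iX_j$ by a first-order operator, which can be reabsorbed into $b$.

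For the converse I would invoke the construction side of Hunt's theorem: given $(b,a,\nu)$ with the stated $\Ad(K)$-invariance and $\nu$ a $K$-bi-invariant L\'evy measure, the operator $\mathcal{A}$ above is the generator of a unique L\'evy convolution semigroup $(\mu^e_t,t\ge 0)$ on $G$ with $\mu^e_0=\delta_e$; then $\mu_t:=\rho_K\ast\mu^e_t\ast\rho_K$ is a convolution semigroup with $\mu_0=\rho_K$, and the $\Ad(K)$-invariance of the data guarantees $\rho_K\ast\mu^e_t=\mu^e_t\ast\rho_K=\mu_t$, so $(\mu_t,t\ge0)$ is genuinely $K$-bi-invariant and has generator agreeing with $\mathcal{A}$ on $C_c^\infty(G)$; uniqueness follows from uniqueness in Hunt's theorem together with the fact that $C_c^\infty(G)$ is a core. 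The main obstacle I anticipate is the bookkeeping in the forward direction: carefully justifying that the full $l$-dimensional Hunt data can be reduced to $d$-dimensional $\Ad(K)$-invariant data without losing or double-counting terms when passing between the $\mathfrak{p}$ and $\mathfrak{k}$ blocks — in particular making the reabsorption of the $\mathfrak{k}$-components into $b$, $a$ and $\nu$ rigorous, and checking that the resulting $\nu$ still satisfies $\int_G\sum_{i=1}^d x_i(\sigma)^2\,\nu(d\sigma)<\infty$ with the truncated coordinate sum. Since the paper explicitly defers to \citet{Liao_InvMark} Section 3.2 for the proof, I would present the argument at the level of these structural steps and cite Liao for the technical core.
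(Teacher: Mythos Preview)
The paper does not actually prove this theorem: it states it and refers the reader to \citet{Liao_InvMark}, Section 3.2, for the details. Your outline --- start from the general Hunt formula on $G$, use $K$-bi-invariance of the semigroup together with the uniqueness of the Hunt triple to force $\Ad(K)$-invariance of $b$, $a$ and $K$-bi-invariance of $\nu$, then reduce from the full basis $X_1,\dots,X_l$ to the $\mathfrak{p}$-block $X_1,\dots,X_d$, with the converse coming from the construction side of Hunt's theorem plus averaging over $K$ --- is exactly the structure of Liao's argument, so you are in line with what the paper cites.
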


Since $G$ is semisimple, $\mathfrak{p}$ has no non-zero $\Ad(K)$-invariant elements. This means that for the class of manifold we are considering, $K$-bi-invariant L\'evy generators will take the form 
	\begin{equation}\label{eq:hunt_SSNCT}
	\mathcal{A}f(\sigma) = \sum_{i,j=1}^da_{ij}X_iX_jf(\sigma) + \int_G\left(f(\sigma\tau)-f(\sigma)-\sum_{i=1}^dx_i(\tau)X_if(\sigma)\right)\nu(d\sigma),
	\end{equation}
Given such a L\'evy generator, we write $\mathcal{A}_D = \sum_{i,j=1}^da_{ij}X_iX_j$ for the diffusion part of $\mathcal{A}$. By the discussion surrounding (3.3) in \citet{Liao_InvMark}, pp.~75, $\mathcal{A}_D\in {\bf D}_K(G)$, and so for each $\lambda\in\a*$ there is $\beta(\mathcal{A}_D,\lambda)\in\mathbb{C}$ such that
	\begin{equation}\label{eq:beta(L_D,lambda)}
	\mathcal{A}_D\phi_\lambda = \beta(\mathcal{A}_D,\lambda)\phi_\lambda.
	\end{equation}
Moreover, $\lambda\mapsto\beta(\mathcal{A}_D,\lambda)$ is a $W$-invariant quadratic polynomial function on $\a*$. 

\begin{thm}[Gangolli's L\'evy--Khinchine formula]\label{thm:GLK}
Let $(\mu_t,t\geq 0)$ be a $K$-bi-invariant convolution semigroup of probability measures on $G$ with infinitesimal generator $\mathcal{A}$, and let $\mathcal{A}_D$ denote the diffusion part of $\mathcal{A}$. Then $\hat\mu_t=e^{-t\psi}$, where
	\begin{equation}\label{eq:GangExp}
	\psi(\lambda) = -\beta(\mathcal{A}_D,\lambda) + \int_G(1-\phi_\lambda(\sigma))\nu(d\sigma) \hspace{20pt} \forall\lambda\in\a*,
	\end{equation}
and $\beta(\mathcal{A}_D,\lambda)$ is given by (\ref{eq:beta(L_D,lambda)}).
\end{thm}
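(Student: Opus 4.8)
The plan is to combine Proposition \ref{prop:T_tɸ_λ(𝛔)}, which gives $T_t\phi_\lambda = \hat\mu_t(\lambda)\phi_\lambda$, with the elementary semigroup/generator theory for $(T_t,t\ge 0)$ on $C_0(K|G|K)$, and then identify the resulting exponent explicitly using Theorem \ref{thm:HuntCor}. First I would fix $\lambda\in\a*$ and observe that $\phi_\lambda$ is bounded (being positive definite for $\lambda\in\a*$, with $|\phi_\lambda|\le\phi_0$), but note that it does not lie in $C_0(K|G|K)$; to stay inside the Banach space where $(T_t)$ acts, I would test the eigenvalue relation against a function $f\in C_c^\infty(K|G|K)$, i.e. consider $\langle T_t f,\phi_\lambda\rangle$ or, more cleanly, apply the spherical transform: since $T_t$ is the Hunt semigroup of the convolution semigroup and $(T_tf)\widehat{\phantom{f}}(\lambda)=\hat\mu_t(\lambda)\hat f(\lambda)$ (a direct computation from \eqref{eq:Huntsemi}, \eqref{eq:SphTr} and \eqref{eq:SphIntForm}, essentially Proposition \ref{prop:T_tɸ_λ(𝛔)} read on the transform side), the generator $\mathcal A$ of $(T_t)$ satisfies $\widehat{\mathcal A f}(\lambda)=\tfrac{d}{dt}\big|_{t=0}\hat\mu_t(\lambda)\,\hat f(\lambda)$ for $f\in C_c^\infty(K|G|K)\subseteq\Dom\mathcal A$.

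Next I would establish that $t\mapsto\hat\mu_t(\lambda)$ is a continuous homomorphism from $[0,\infty)$ into $\mathbb C$: continuity follows from weak continuity of $t\mapsto\mu_t$ in Definition \ref{mydef:convsemigpG}(3) together with boundedness of $\phi_{-\lambda}$, and the homomorphism property $\hat\mu_{s+t}(\lambda)=\hat\mu_s(\lambda)\hat\mu_t(\lambda)$ is immediate from $\mu_{s+t}=\mu_s\ast\mu_t$, the definition \eqref{eq:convmeasG} of convolution, and \eqref{eq:SphIntForm} (the spherical function is "multiplicative under convolution" in exactly the way needed). A continuous multiplicative function on $[0,\infty)$ with value $1$ at a point (here $\hat\mu_0(\lambda)=\hat\rho_K(\lambda)=1$, or $\hat\mu_0=1$ in the probability-semigroup case) that is nowhere zero has the form $e^{-t\psi(\lambda)}$ for a unique $\psi(\lambda)\in\mathbb C$; non-vanishing is seen from $\hat\mu_t(\lambda)=\hat\mu_{t/2}(\lambda)^2\ge 0$ real for $\lambda\in\a*$ combined with continuity and $\hat\mu_0=1$ (or one differentiates at $t=0$, which exists because $\phi_\lambda$ is in the domain after pairing with $C_c^\infty$ test functions as above). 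This gives $\hat\mu_t=e^{-t\psi}$ with $\psi(\lambda)=-\tfrac{d}{dt}\big|_{t=0}\hat\mu_t(\lambda)$.

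Finally I would compute $\psi(\lambda)$ by differentiating at $t=0$. Using $\widehat{\mathcal A f}(\lambda)=-\psi(\lambda)\hat f(\lambda)$ and the explicit Hunt form \eqref{eq:hunt_SSNCT} of $\mathcal A$ on $C_c^\infty$, I apply $\mathcal A$ to $\phi_\lambda$ (justified either by the pairing argument or because each term of \eqref{eq:hunt_SSNCT} makes sense on the smooth bounded function $\phi_\lambda$): the diffusion part contributes $\mathcal A_D\phi_\lambda=\beta(\mathcal A_D,\lambda)\phi_\lambda$ by \eqref{eq:beta(L_D,lambda)}, and the jump part contributes $\int_G\big(\phi_\lambda(\sigma\tau)-\phi_\lambda(\sigma)-\sum_i x_i(\tau)X_i\phi_\lambda(\sigma)\big)\nu(d\tau)$; evaluating at $\sigma=e$ and using \eqref{eq:SphIntForm} with $\sigma=e$ (so $\int_K\phi_\lambda(k\tau)dk=\phi_\lambda(e)\phi_\lambda(\tau)=\phi_\lambda(\tau)$) together with the $K$-right-invariance of $\phi_\lambda$, the correction term $\sum_i x_i(\tau)X_i\phi_\lambda(e)$ integrates to zero over $K$ by the $\Ad(K)$-equivariance of the $x_i$ and the absence of $\Ad(K)$-invariant vectors in $\mathfrak p$. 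This yields $\mathcal A\phi_\lambda(e)=\beta(\mathcal A_D,\lambda)-\int_G(1-\phi_\lambda(\tau))\nu(d\tau)$, hence $\psi(\lambda)=-\beta(\mathcal A_D,\lambda)+\int_G(1-\phi_\lambda(\sigma))\nu(d\sigma)$, which is \eqref{eq:GangExp}.

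The main obstacle is analytic bookkeeping rather than conceptual: making rigorous the passage from the eigenvalue relation on the non-$C_0$ function $\phi_\lambda$ to statements about the generator of $(T_t)$ on $C_0(K|G|K)$, and justifying differentiation under the integral sign in the jump term (one needs the L\'evy-measure integrability $\int\sum_i x_i^2\,d\nu<\infty$ together with a Taylor estimate $|\phi_\lambda(\sigma\tau)-\phi_\lambda(\sigma)-\sum_i x_i(\tau)X_i\phi_\lambda(\sigma)|\lesssim \sum_i x_i(\tau)^2$ near $e$ and boundedness of $1-\phi_\lambda$ away from $e$). All of this is routine given Theorem \ref{thm:HuntCor} and standard Feller-semigroup theory, so where the argument parallels the classical L\'evy--Khinchine proof I would simply refer to Theorem 3.2 of \citet{Liao_InvMark} and the spherical-function identity \eqref{eq:SphIntForm}.
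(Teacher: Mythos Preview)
The paper does not give its own proof of this theorem: immediately after the statement it cites \citet{gangolli}, \citet{LiaoWang}, and directs the reader to page 139 of \citet{Liao_InvMark}. So there is no ``paper's proof'' to compare against; the result is imported from the literature.

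Your sketch is essentially the standard argument and is sound in outline. Two small points are worth tightening. First, the claim that $\hat\mu_t(\lambda)=\hat\mu_{t/2}(\lambda)^2\ge 0$ is real relies on $\phi_\lambda$ being real-valued, which in turn requires $-1\in W$; this fails for some root systems, so the non-vanishing of $t\mapsto\hat\mu_t(\lambda)$ is better obtained from differentiability at $t=0$ (which you already mention as the alternative). Second, the vanishing of the first-order correction term in the jump integral at $\sigma=e$ is simpler than your $\Ad(K)$-averaging argument: one has $X\phi_\lambda(e)=0$ for every $X\in\mathfrak p$ directly (the paper itself invokes this fact, citing Theorem 5.3(b) of \citet{Liao_InvMark}, in the proof of Theorem \ref{thm:psdo}). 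With those adjustments your proposal matches the route taken in the cited sources.
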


This result was first proven in \citet{gangolli}, see also \citet{LiaoWang}. For a proof of the specific statement above, see page 139 of \citet{Liao_InvMark}.

The function $\psi$ given by (\ref{eq:GangExp}) will be called the \emph{Gangolli exponent} of the process $X$. 

\begin{rmk}
If Definition \ref{mydef:convsemigpG} (\ref{probmeas}) is relaxed so that each $\mu_t$ need only satisfy $\mu_t(G)\leq 1$, all of the results described in this subsection continue to hold, except ``sub-'' must be added to some to the terms: convolution semigroups of \emph{sub}-probability measures, \emph{sub-}L\'evy generators, \emph{sub}-diffusion operators, and so on. 
\end{rmk}
%
\subsection{Positive and Negative Definite Functions}\label{subsec:+-fns}
By viewing $\a*$ as a finite-dimensional real vector space, we may consider positive and negative definite functions on $\a*$, defined in the usual way. 

\begin{prop}\label{prop:egs}
\begin{enumerate}
\item For all $\sigma\in G$, $\lambda\mapsto\phi_\lambda(\sigma)$ is positive definite.
\item\label{hatmu} Let $\mu$ be a finite $K$-bi-invariant Borel measure. Then $\hat{\mu}$ is positive definite.
\end{enumerate}
\end{prop}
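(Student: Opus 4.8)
The plan is to prove the two assertions in turn, both by reducing to the defining property of positive definiteness and exploiting the structure of spherical functions.

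\medskip

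For part (1), fix $\sigma\in G$ and recall Harish-Chandra's integral formula (\ref{eq:H-C}): $\phi_\lambda(\sigma)=\int_K e^{(\rho+i\lambda)(A(k\sigma))}\,dk$. The key observation is that for each fixed $k\in K$, the map $\lambda\mapsto e^{i\lambda(A(k\sigma))}$ is a character-type function on the real vector space $\a*$, hence positive definite: indeed for any $\lambda_1,\dots,\lambda_n\in\a*$ and $c_1,\dots,c_n\in\mathbb{C}$ one has $\sum_{j,l}c_j\bar c_l\,e^{i(\lambda_j-\lambda_l)(A(k\sigma))}=\bigl|\sum_j c_j e^{i\lambda_j(A(k\sigma))}\bigr|^2\geq 0$. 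Multiplying by the positive factor $e^{\rho(A(k\sigma))}$ preserves positive definiteness, and then integrating over $K$ against the (positive) Haar measure $dk$ does too, since a pointwise limit of sums of positive definite functions is positive definite and integrals are such limits. This gives that $\lambda\mapsto\phi_\lambda(\sigma)$ is positive definite. (One should note that this is consistent with the statement recalled earlier that $\phi_\lambda$ is positive definite \emph{as a function on $G$} precisely when $\lambda\in\a*$ — here we are using the complementary fact that, as a function of $\lambda$ over the real form $\a*$, it is always positive definite.)

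\medskip

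For part (2), let $\mu$ be a finite $K$-bi-invariant Borel measure on $G$. By definition $\hat\mu(\lambda)=\int_G\phi_{-\lambda}(\sigma)\,\mu(d\sigma)$. Since $\mu$ is $K$-bi-invariant one has $\phi_{-\lambda}(\sigma)=\phi_\lambda(\sigma)$ for $\mu$-almost every $\sigma$ — more carefully, for $K$-bi-invariant arguments $\phi_{-\lambda}$ and $\phi_\lambda$ agree because $-\mathrm{id}$ lies in (or acts as) a Weyl-group-type symmetry on the relevant spherical functions, or one simply argues directly from (\ref{eq:H-C}) together with $K$-invariance of the measure; in any case $\hat\mu(\lambda)=\int_G\phi_\lambda(\sigma)\,\mu(d\sigma)$. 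Now apply part (1): for each fixed $\sigma$, $\lambda\mapsto\phi_\lambda(\sigma)$ is positive definite, and $\hat\mu$ is the integral of these positive definite functions against the finite positive measure $\mu$, hence positive definite by the same limiting argument as above.

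\medskip

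The main obstacle I anticipate is purely bookkeeping rather than conceptual: one must be careful about the relationship between $\phi_\lambda$ and $\phi_{-\lambda}$ in part (2) (whether to invoke $W$-invariance, the presence of $-\mathrm{id}$ in $W$, or to argue from the integral formula directly) and about the precise sense in which "an integral of positive definite functions is positive definite" — this needs the integrand to be jointly measurable and the measure finite, which hold here, so that the finite sums in the positive-definiteness inequality pass to the limit. Neither point is deep, but they are the places where a careless proof would have a gap.
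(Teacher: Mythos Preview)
Your argument for part (1) is correct and is essentially the paper's proof: both use Harish-Chandra's formula to write $\phi_\lambda(\sigma)$ as an integral over $K$ of functions $e^{\rho(A(k\sigma))}e^{i\lambda(A(k\sigma))}$, observe that each such integrand is positive definite in $\lambda$, and integrate. The paper packages this slightly more compactly by writing $\sum_{\alpha,\beta}c_\alpha\overline{c_\beta}\,e^{(i(\lambda_\alpha-\lambda_\beta)+\rho)(A(k\sigma))}=\bigl|\sum_\alpha c_\alpha e^{(i\lambda_\alpha+\rho/2)(A(k\sigma))}\bigr|^2$, absorbing the $e^\rho$ factor into the square, but this is cosmetic.

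Part (2), however, has a genuine gap at exactly the place you flagged as ``bookkeeping''. Your claim that $\phi_{-\lambda}(\sigma)=\phi_\lambda(\sigma)$ for $\mu$-a.e.\ $\sigma$ is not correct in general, and neither of the justifications you offer works. The spherical functions are already $K$-bi-invariant, so $K$-bi-invariance of $\mu$ adds nothing here; and $-\mathrm{id}$ need not lie in the Weyl group (it fails, for instance, for $G=SL(3,\mathbb{R})$, where $W\cong S_3$). The correct relation, immediate from (\ref{eq:H-C}) with $\lambda$ real, is $\phi_{-\lambda}(\sigma)=\overline{\phi_\lambda(\sigma)}$, which is not the same thing.

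The fix is short and is what the paper does: one shows that $\lambda\mapsto\phi_{-\lambda}(\sigma)$ is \emph{itself} positive definite. Starting from the inequality in part (1), replace each $c_j$ by $\overline{c_j}$ (which is allowed since the $c_j$ are arbitrary) and then take the complex conjugate of the resulting nonnegative real number; using $\overline{\phi_\lambda}=\phi_{-\lambda}$ this yields $\sum_{\alpha,\beta}c_\alpha\overline{c_\beta}\,\phi_{-(\lambda_\alpha-\lambda_\beta)}(\sigma)\geq 0$. Integrating against the finite positive measure $\mu$ then gives positive definiteness of $\hat\mu$ directly, and in fact the $K$-bi-invariance of $\mu$ is never used in this step. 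Equivalently: the complex conjugate of a positive definite function is positive definite, which is the abstract fact underlying the paper's manipulation.
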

\begin{proof}
Let $\sigma\in G$, $n\in\mathbb{N}$, $\lambda_1,\ldots,\lambda_n\in\a*$, and $c_1,\ldots,c_n\in\mathbb{C}$, and note that
	\[\sum_{\alpha,\beta=1}^nc_\alpha\overline{c_\beta}e^{(i(\lambda_\alpha-\lambda_\beta)+\rho)A(k\sigma)} = \left|\sum_{\alpha=1}^nc_\alpha e^{(i\lambda_\alpha+\frac{\rho}{2})A(k\sigma)}\right|^2 \geq 0.\]
Therefore, by the Harish-Chandra integral formula (\ref{eq:H-C}),
	\begin{equation}\label{eq:2,7}
	\sum_{\alpha,\beta=1}^nc_\alpha\overline{c_\beta}\phi_{\lambda_\alpha-\lambda_\beta}(\sigma) = \int_K\sum_{\alpha,\beta=1}^nc_\alpha\overline{c_\beta}e^{(i(\lambda_\alpha-\lambda_\beta)+\rho)A(k\sigma)}dk \geq 0.
\end{equation}
Part 1 follows.

For part 2, observe that since (\ref{eq:2,7}) holds for all $c_1,\ldots,c_n$, we can replace each $c_j$ by its complex conjugate. Therefore, $\sum_{\alpha,\beta=1}^n\overline{c_\alpha}c_\beta\phi_{\lambda_\alpha-\lambda_\beta}(\sigma)\geq 0$ for all $\sigma\in G$, $n\in\mathbb{N}$, $\lambda_1,\ldots,\lambda_n\in\a*$, and $c_1,\ldots,c_n\in\mathbb{C}$. Taking complex conjugates,
	\[\sum_{\alpha,\beta=1}^nc_\alpha\overline{c_\beta}\phi_{-(\lambda_\alpha-\lambda_\beta)}(\sigma) = \overline{\sum_{\alpha,\beta=1}^n\overline{c_\alpha}c_\beta\phi_{\lambda_\alpha-\lambda_\beta}}\geq 0,\]
for all $\sigma\in G$, $n\in\mathbb{N}$, $\lambda_1,\ldots,\lambda_n\in\a*$, and $c_1,\ldots,c_n\in\mathbb{C}$, and hence
	\[\sum_{\alpha,\beta=1}^nc_\alpha\overline{c_\beta}\hat{\mu}(\lambda_\alpha-\lambda_\beta) = \int_{\a*}\sum_{\alpha,\beta=1}^nc_\alpha\overline{c_\beta}\phi_{-(\lambda_\alpha-\lambda_\beta)}(\sigma)\mu(d\sigma)\geq 0.\]
\end{proof}

By choosing a basis of $\a*$, we may identify it with $\mathbb{R}^m$, and apply classical results about positive (resp.~negative) definite functions on Euclidean space to functions on $\a*$, to obtain results about positive (resp.~negative) definite functions in this new setting.

One useful application of this is the Schoenberg correspondence, which states that a map $\psi:\a*\to\mathbb{C}$ is negative definite if and only if $\psi(0)\geq 0$ and $e^{-t\psi}$ is positive definite for all $t>0$. This is immediate by the Schoenberg correspondence on $\mathbb{R}^m$ --- see \citet{BergForst} page 41 for a proof.

\begin{prop}
Let $\psi:\a*\to\mathbb{C}$ be the Gangolli exponent of a L\'evy process on $G/K$. Then $\psi$ is negative definite.
\end{prop}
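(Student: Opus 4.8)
The plan is to reduce the statement to two ingredients already in hand: Gangolli's L\'evy--Khinchine formula (Theorem \ref{thm:GLK}), which identifies $e^{-t\psi}$ with the spherical transform $\hat\mu_t$ of the $K$-bi-invariant convolution semigroup associated with the process, and the Schoenberg correspondence recorded just above, by which a map $\psi:\a*\to\mathbb{C}$ is negative definite precisely when $\psi(0)\geq 0$ and $e^{-t\psi}$ is positive definite for every $t>0$.

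First I would fix a L\'evy process $Y$ on $G/K$, realise it as $\pi(X)$ for a $K$-bi-invariant L\'evy process $X$ on $G$, and let $(\mu_t,t\geq 0)$ be the associated $K$-bi-invariant convolution semigroup, so that $\hat\mu_t=e^{-t\psi}$ for all $t\geq 0$ by Theorem \ref{thm:GLK}. For each fixed $t>0$ the measure $\mu_t$ is a finite $K$-bi-invariant Borel measure on $G$, so Proposition \ref{prop:egs}(\ref{hatmu}) shows at once that $e^{-t\psi}=\hat\mu_t$ is positive definite on $\a*$. This disposes of the second Schoenberg condition.

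It then remains only to check that $\psi(0)\geq 0$. Setting $\lambda=0$ in the spherical transform gives $e^{-t\psi(0)}=\hat\mu_t(0)=\int_G\phi_0(\sigma)\,\mu_t(d\sigma)$. Since $0\in\a*$, the spherical function $\phi_0$ is positive definite, hence real-valued with $\phi_0(\sigma)\leq\phi_0(e)=1$ for every $\sigma\in G$; together with $\mu_t(G)=1$ this forces $\hat\mu_t(0)\in(0,1]$, and therefore $e^{-t\psi(0)}\in(0,1]$ for all $t>0$. Consequently $\psi(0)$ is real and non-negative, and the Schoenberg correspondence yields that $\psi$ is negative definite.

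I do not anticipate a genuine obstacle: the statement is essentially a repackaging of Theorem \ref{thm:GLK}, Proposition \ref{prop:egs}, and the Schoenberg correspondence. The one point needing a little care is the sign of $\psi(0)$ --- one should resist importing the Euclidean reflex that ``$\psi(0)=0$ for a conservative process'', since here $\mu_0=\rho_K$ rather than $\delta_e$ and $\phi_0\not\equiv 1$, and instead use the bound $\phi_0\leq 1$ coming from positive-definiteness. The same argument goes through unchanged in the sub-Feller case, where one has only $\hat\mu_t(0)\leq\mu_t(G)\leq 1$, and the conclusion is unaffected.
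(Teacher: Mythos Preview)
Your proof is correct and follows essentially the same route as the paper: Proposition~\ref{prop:egs}(\ref{hatmu}) gives that each $\hat\mu_t$ is positive definite, and the Schoenberg correspondence then yields negative definiteness of $\psi$. The only organisational difference is that you invoke Theorem~\ref{thm:GLK} up front to identify $\hat\mu_t=e^{-t\psi}$ and verify $\psi(0)\geq 0$ explicitly via $0<\phi_0\leq 1$, whereas the paper first writes each $\hat\mu_t=e^{-\psi_t}$ via Schoenberg, deduces $\psi_t=t\psi_1$ from the semigroup property, and then matches $\psi_1$ with the Gangolli exponent by uniqueness.
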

\begin{proof}
Let $X=(X(t),t\geq 0)$ is a L\'evy process on $G/K$, and let $\nu_t$ be the law of $X(t)$, for all $t\geq 0$. Then $(\nu_t,t\geq 0)$ forms a convolution semigroup on $G/K$. By Proposition 1.12 of \citet{Liao_InvMark} (pp.~13), $(\nu_t,t\geq 0)$ arises as the projection onto $G/K$ of a $K$-bi-invariant convolution semigroup $(\mu_t,t\geq 0)$ on $G$. By Proposition \ref{prop:egs}, the spherical transform of each $\mu_t$ is positive definite, and by the Schoenberg correspondence, for each $t\geq 0$, there is a negative definite function $\psi_t$ on $\a*$ such that $\psi_t(0)\geq 0$ and $\hat\mu_t=e^{-\psi_t}$. In fact, since $(\mu_t,t\geq 0)$ is a convolution semigroup, it must be the case that
	\[\hat\mu_t = e^{-t\psi_1}, \hspace{20pt} \forall t\geq 0.\]
By uniqueness of Gangolli exponents, $\psi=\psi_1$, a negative definite function.
\end{proof}

We finish this subsection with a collection of results about negative definite functions, which will be useful in later sections.

\begin{prop}\label{prop:negdef}
Let $\psi:\a*\to\mathbb{C}$ be a continuous negative definite function. Then
\begin{enumerate}
\item\label{sqrt} For all $\lambda,\eta\in\a*$,	
	\[\left|\sqrt{|\psi(\lambda)|}-\sqrt{|\psi(\eta)|}\right|\leq\sqrt{|\psi(\lambda-\eta)|}\]
\item\label{peetre} (Generalised Peetre inequality) For all $s\in\mathbb{R}$ and $\lambda,\eta\in\a*$,
	\[\left(\frac{1+|\psi(\lambda)|}{1+|\psi(\eta)|}\right)^s \leq 2^{|s|}(1+|\psi(\lambda-\eta)|)^{|s|}.\]
\item\label{c_phi} There is a constant $c_\psi>0$ such that
	\begin{equation}\label{eq:c_phi}
	|\psi(\lambda)|\leq c_\psi(1+|\lambda|^2) \hspace{20pt} \forall\lambda\in\a*.
	\end{equation}
\end{enumerate}
\end{prop}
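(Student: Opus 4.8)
The plan is to reduce the three estimates to classical facts about continuous negative definite functions on Euclidean space. First I would fix a basis of $\mathfrak{a}^*$ and use it to identify $\mathfrak{a}^*$ linearly with $\mathbb{R}^m$; since the defining inequalities of negative definiteness involve only the additive group structure, precomposition with a linear isomorphism preserves both negative definiteness and continuity, so $\psi$ may be regarded as a continuous negative definite function on $\mathbb{R}^m$ and it suffices to prove everything there. Throughout I would use the two standard properties of such a $\psi$: that $\psi(0)\geq 0$ and $\psi(-\lambda)=\overline{\psi(\lambda)}$ --- hence $|\psi(-\lambda)|=|\psi(\lambda)|$ and, from the one-point case of the matrix characterization, $\operatorname{Re}\psi\geq 0$ --- and that the Hermitian matrix $\bigl(\psi(\lambda_i)+\overline{\psi(\lambda_j)}-\psi(\lambda_i-\lambda_j)\bigr)_{i,j}$ is positive semidefinite for every finite family $\lambda_1,\dots,\lambda_n\in\mathbb{R}^m$.

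The crucial step is the subadditivity estimate $\sqrt{|\psi(\lambda+\eta)|}\leq\sqrt{|\psi(\lambda)|}+\sqrt{|\psi(\eta)|}$. To prove it I would apply positive semidefiniteness of the $2\times2$ matrix formed from $\lambda_1=\lambda$ and $\lambda_2=-\eta$: its diagonal entries $2\operatorname{Re}\psi(\lambda)-\psi(0)$ and $2\operatorname{Re}\psi(\eta)-\psi(0)$ are nonnegative and bounded above by $2|\psi(\lambda)|$ and $2|\psi(\eta)|$ respectively, while the off-diagonal entry equals $\psi(\lambda)+\psi(\eta)-\psi(\lambda+\eta)$ (using $\overline{\psi(-\eta)}=\psi(\eta)$), so the determinant inequality yields
\[
|\psi(\lambda)+\psi(\eta)-\psi(\lambda+\eta)|^2\ \leq\ 4\,|\psi(\lambda)|\,|\psi(\eta)|.
\]
Hence $|\psi(\lambda+\eta)|\leq|\psi(\lambda)|+|\psi(\eta)|+2\sqrt{|\psi(\lambda)|\,|\psi(\eta)|}=\bigl(\sqrt{|\psi(\lambda)|}+\sqrt{|\psi(\eta)|}\bigr)^2$, and taking square roots proves the claim (alternatively one cites it from \citet{jacob93} or \citet{BergForst}). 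Part (\ref{sqrt}) then follows: applying subadditivity to $\lambda=(\lambda-\eta)+\eta$ gives $\sqrt{|\psi(\lambda)|}-\sqrt{|\psi(\eta)|}\leq\sqrt{|\psi(\lambda-\eta)|}$, and interchanging $\lambda$ and $\eta$, together with $|\psi(\eta-\lambda)|=|\psi(\lambda-\eta)|$, gives the reverse bound, hence the absolute value.

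Parts (\ref{peetre}) and (\ref{c_phi}) are then routine consequences. For (\ref{peetre}): from subadditivity $\sqrt{|\psi(\lambda)|}\leq\sqrt{|\psi(\lambda-\eta)|}+\sqrt{|\psi(\eta)|}$, so squaring and using $2ab\leq a^2+b^2$ gives $|\psi(\lambda)|\leq 2|\psi(\lambda-\eta)|+2|\psi(\eta)|$, whence $1+|\psi(\lambda)|\leq 2\bigl(1+|\psi(\lambda-\eta)|\bigr)\bigl(1+|\psi(\eta)|\bigr)$; dividing, then raising to the power $s$ when $s\geq 0$, yields the bound, and for $s<0$ one exchanges $\lambda$ and $\eta$ and uses $|\psi(\eta-\lambda)|=|\psi(\lambda-\eta)|$. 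For (\ref{c_phi}): iterating subadditivity gives $|\psi(n\lambda)|\leq n^2|\psi(\lambda)|$ for $n\in\mathbb{N}$; with $C:=\sup_{|\lambda|\leq 1}|\psi(\lambda)|<\infty$ (finite by continuity), any $\lambda\neq 0$ satisfies $|\psi(\lambda)|\leq n^2|\psi(\lambda/n)|\leq Cn^2$ for $n:=\lceil|\lambda|\rceil$, and $Cn^2\leq C(1+|\lambda|)^2\leq 2C(1+|\lambda|^2)$, so $c_\psi:=2C$ works (the case $\lambda=0$ being trivial).

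The only genuinely non-elementary ingredient is the subadditivity of $\sqrt{|\psi|}$, and inside that argument the one place where negative-definiteness is actually used is the positive semidefiniteness of the $2\times2$ matrix; everything after that is bookkeeping. The only other point deserving a line of care is confirming that the linear identification $\mathfrak{a}^*\cong\mathbb{R}^m$ carries continuous negative definite functions to continuous negative definite functions, which it does because negative definiteness depends only on the additive group operation of $\mathfrak{a}^*$.
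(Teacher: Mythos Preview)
Your proposal is correct and follows the same strategy as the paper: identify $\mathfrak{a}^*$ with $\mathbb{R}^m$ via a basis and invoke the classical Euclidean results. The paper merely cites \citet{hoh} page 16 for the $\mathbb{R}^m$ case, whereas you spell out the standard proofs (the $2\times 2$ positive-semidefinite-matrix argument for subadditivity of $\sqrt{|\psi|}$, and the routine derivations of Peetre and quadratic growth from it), so your version is self-contained but not substantively different.
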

\begin{proof}
These results follow from their analogues on $\mathbb{R}^m$ --- see \citet{hoh} page 16.
\end{proof}

%
\subsection{Spherical Anisotropic Sobolev Spaces}\label{sec:AniSob}
Suppose $\psi$ is a real-valued continuous negative definite function, and let $s\in\mathbb{R}$. We define the \emph{(spherical) anisotropic Sobolev space} associated with $\psi$ and $s$ to be 
	\[H^{\psi,s} := \left\{u\in\mathcal{S}'(K|G|K):\int_G(1+\psi(\lambda))^s|\hat{u}(\lambda)|^2\omega(d\lambda)<\infty\right\},\]
where $\mathcal{S}'(K|G|K)$ denotes the space of $K$-bi-invariant tempered distributions. 
One can check that each $H^{\psi,s}$ is a Hilbert space with respect to the inner product
	\[\langle u,v\rangle_{\psi,s} := \int_{\a*}(1+\psi(\lambda))^s\hat{u}(\lambda)\overline{\hat{v}(\lambda)}\omega(d\lambda), \hspace{20pt} \forall u,v\in H^{\psi,s}.\]
These spaces are a generalisation of the anisotropic Sobolev spaces first introduced by Niels Jacob, see \citet{jacob93}, and developed further by Hoh, see \citet{hoh}. For the special case $\psi(\lambda)=|\rho|^2+|\lambda|^2$, we will write $H^{\psi,s}=H^s$. 
Note also that $H^{\psi,0}=L^2(K|G|K)$, by the Plancherel theorem. In this case, we will omit subscripts and just write $\langle\cdot,\cdot\rangle$ for the $L^2$ inner product.

Note that $\psi$ is a non-negative function, since it is negative definite and real-valued. We impose an additional assumption, namely that there exist constants $r,c>0$ such that
	\begin{equation}\label{eq:psi_est}
	\psi(\lambda)\geq c|\lambda|^{2r} \hspace{20pt} \forall \lambda\in\a*,\;|\lambda|\geq 1.
	\end{equation}
Analogous assumptions are made in \citet{jacob94} (1.5) and \citet{hoh} (4.2), and the role of (\ref{eq:psi_est}) will be very similar.

\begin{thm}\label{thm:aniso}
Let $\psi$ be a real-valued, continuous negative definite symbol, satisfying (\ref{eq:psi_est}). Then
\begin{enumerate}
\item\label{S->H_>S'} $C_c^\infty(K|G|K)$ and $\mathcal{S}(K|G|K)$ are dense in each $H^{\psi,s}$, and we have continuous embeddings
	\[\mathcal{S}(K|G|K)\hookrightarrow H^{\psi,s}\hookrightarrow\mathcal{S}'(K|G|K)\]
\item\label{ctsembed} We have continuous embeddings
	\[H^{\psi,s_2}\hookrightarrow H^{\psi,s_1} \]
whenever $s_1,s_2\in\mathbb{R}$ with $s_2\geq s_1$. In particular, $H^{\psi,s}\hookrightarrow L^2(K|G|K)$ for all $s\geq 0$.
\item Under the standard identification of $L^2(K|G|K)$ with its dual, the dual space of each $H^{\psi,s}$ is isomorphic to $H^{\psi,-s}$, with
	\begin{equation}\label{eq:dual}
	\Vert u\Vert_{\psi,-s} = \sup\left\{\frac{|\langle u,v\rangle|}{\Vert v\Vert_{\psi,s}}:v\in C_c^\infty(K|G|K),\;v\neq 0\right\},
	\end{equation}
for all $s\in\mathbb{R}$.
\item\label{Hs_>Hpsis_>Hrs} For $r>0$ as in equation (\ref{eq:psi_est}), we have continuous embeddings
	\[H^s \hookrightarrow H^{\psi,s} \hookrightarrow H^{rs},\]
for all $s\geq 0$.
\item\label{s3s2s1} Let $s_3>s_2>s_1$. Then for all $\epsilon>0$, there is $c(\epsilon)\geq 0$ such that
	\begin{equation}\label{eq:item}
	\Vert u\Vert_{\psi,s_2} \leq \epsilon\Vert u\Vert_{\psi,s_3} + c(\epsilon)\Vert u\Vert_{\psi,s_1} 
	\end{equation}
for all $u\in H^{\psi,s_3}$.
\item\label{SobEmbed} There exist continuous embeddings
	\[H^{\psi,s}\hookrightarrow C_0(K|G|K)\]
for all $s>\frac{d}{r}$, where $d=\dim(G/K)$.
\end{enumerate}
\end{thm}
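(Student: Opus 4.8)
The plan is to prove the quantitative estimate $\|u\|_\infty\le C_s\|u\|_{\psi,s}$ for all $u\in H^{\psi,s}$ when $s>d/r$, and then deduce that the corresponding inclusion actually maps into $C_0(K|G|K)$ by a density argument. First I would reduce to test functions: since $C_c^\infty(K|G|K)$ is dense in $H^{\psi,s}$ by Theorem~\ref{thm:aniso}(\ref{S->H_>S'}), it suffices to produce $C_s>0$ with $\|v\|_\infty\le C_s\|v\|_{\psi,s}$ for every $v\in C_c^\infty(K|G|K)$. Granting this, take $u\in H^{\psi,s}$ and $v_n\to u$ in $H^{\psi,s}$ with $v_n\in C_c^\infty(K|G|K)$; the estimate makes $(v_n)$ uniformly Cauchy, so $v_n\to w$ in the Banach space $C_0(K|G|K)$, while the continuous embedding $H^{\psi,s}\hookrightarrow\mathcal S'(K|G|K)$ from the same item of Theorem~\ref{thm:aniso}, combined with the fact that uniform convergence implies convergence in $\mathcal S'(K|G|K)$, forces $u=w$; hence $u\in C_0(K|G|K)$ with $\|u\|_\infty\le C_s\|u\|_{\psi,s}$.

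For $v\in C_c^\infty(K|G|K)$ I would use the spherical inversion formula~(\ref{eq:SphInv}) to write $v(\sigma)=\int_{\mathfrak a^\ast}\phi_\lambda(\sigma)\hat v(\lambda)\,\omega(d\lambda)$. For $\lambda\in\mathfrak a^\ast$ the spherical function $\phi_\lambda$ is positive definite (Proposition~\ref{prop:egs}) and satisfies $\phi_\lambda(e)=1$, so $|\phi_\lambda(\sigma)|\le1$ uniformly in $\sigma$; inserting the factor $1=(1+\psi(\lambda))^{-s/2}(1+\psi(\lambda))^{s/2}$ and applying the Cauchy--Schwarz inequality gives
\[
|v(\sigma)|\ \le\ \int_{\mathfrak a^\ast}|\hat v(\lambda)|\,\omega(d\lambda)\ \le\ \Big(\int_{\mathfrak a^\ast}(1+\psi(\lambda))^{-s}\,\omega(d\lambda)\Big)^{1/2}\|v\|_{\psi,s}\ =:\ I_s^{1/2}\,\|v\|_{\psi,s}.
\]
Everything thus reduces to showing $I_s<\infty$ for $s>d/r$. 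Writing $\omega(d\lambda)=|\mathbf c(\lambda)|^{-2}\,d\lambda$ and splitting at $|\lambda|=1$, the part over $\{|\lambda|\le1\}$ is finite since there $(1+\psi)^{-s}\le1$ and $|\mathbf c(\lambda)|^{-2}$ is bounded. Over $\{|\lambda|>1\}$ I would use the hypothesis~(\ref{eq:psi_est}), $\psi(\lambda)\ge c|\lambda|^{2r}$, together with the classical polynomial bound on the Plancherel density $|\mathbf c(\lambda)|^{-2}\le C_0(1+|\lambda|)^{d-\dim\mathfrak a}$ (a standard consequence of the Gindikin--Karpelevich product formula and Stirling's estimate; see, e.g., \citet{helg2}). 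Passing to polar coordinates $\rho=|\lambda|$ on $\mathfrak a^\ast$,
\[
I_s\ \lesssim\ 1+\int_1^\infty\rho^{-2rs}(1+\rho)^{d-\dim\mathfrak a}\,\rho^{\dim\mathfrak a-1}\,d\rho\ \lesssim\ 1+\int_1^\infty\rho^{\,d-1-2rs}\,d\rho,
\]
which converges once $2rs>d$, in particular for every $s>d/r$. (The same estimate, repackaged as the Plancherel ball bound $\omega(\{|\lambda|\le R\})\lesssim R^d$ and summed over dyadic shells, already gives finiteness for all $s>d/(2r)$; alternatively one may first apply Theorem~\ref{thm:aniso}(\ref{Hs_>Hpsis_>Hrs}) to reduce to the isotropic embedding $H^t\hookrightarrow C_0(K|G|K)$ for $t>d/2$, the special case $\psi(\lambda)=|\rho|^2+|\lambda|^2$, $r=1$.)

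I expect the main obstacle to be this last step, specifically the invocation of the polynomial growth bound for the Plancherel density $|\mathbf c(\lambda)|^{-2}$: this is the one ingredient that genuinely reflects the fine structure of the symmetric space rather than a formal analogy with $\mathbb R^d$, and identifying the correct exponent $d-\dim\mathfrak a$ is precisely what makes the Plancherel volume of a ball of radius $R$ scale like $R^d$ with $d=\dim(G/K)$, so that the Sobolev threshold comes out in terms of $d$. The remaining ingredients --- the inversion formula, the uniform bound $|\phi_\lambda|\le1$, the Cauchy--Schwarz step, and the density/completeness argument that upgrades the bound from $C_b(K|G|K)$ to $C_0(K|G|K)$ --- all run exactly as in the classical proof of $H^s(\mathbb R^d)\hookrightarrow C_0(\mathbb R^d)$.
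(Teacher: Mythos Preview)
Your argument for part~(\ref{SobEmbed}) is correct and follows essentially the same template as the paper's: spherical inversion, the uniform bound $|\phi_\lambda|\le 1$, insertion of the weight, Cauchy--Schwarz, integrability of the resulting weight against Plancherel measure, and a density argument to land in $C_0(K|G|K)$.

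The organisational difference is that the paper first invokes part~(\ref{Hs_>Hpsis_>Hrs}) to reduce to the isotropic embedding $H^t\hookrightarrow C_0(K|G|K)$ for $t>d$, and then appeals to the separately stated Lemma~\ref{lem:<.>} (which shows $\langle\cdot\rangle^{-M}\in L^1(\a*,\omega)$ for $M>d$, via the bound $|\hcc(\lambda)|^{-1}\le C_1+C_2|\lambda|^{\dim N/2}$) for the integrability step. You instead work directly with the anisotropic weight $(1+\psi)^{-s}$ and establish $I_s<\infty$ from the same $\hcc$-function estimate --- your exponent $d-\dim\mathfrak a$ is exactly the paper's $\dim N$, since $d=\dim\mathfrak a+\dim N$. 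Your route is slightly more direct and, as you note, actually yields the sharper threshold $s>d/(2r)$; the paper's route has the advantage that Lemma~\ref{lem:<.>} is isolated and reused several times later (e.g.\ in Theorems~\ref{thm:H^psi,2} and~\ref{thm:H^psi,s}). You already flag the paper's reduction as an alternative, so you have both arguments in hand.
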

For brevity, let
	\begin{equation}\label{eq:<.>}
	\langle\lambda\rangle := \sqrt{1+|\lambda|^2}, \hspace{20pt} \forall\lambda\in\a*,
	\end{equation}
and
	\begin{equation}\label{eq:Psi}
	\Psi(\lambda) := \sqrt{1+\psi(\lambda)}, \hspace{20pt} \forall\lambda\in\a*.
	\end{equation}
The proof of Theorem \ref{thm:aniso} will be given after the next lemma.
\begin{lem}\label{lem:<.>}
Let $M>d=\dim(G/K)$. Then $\langle\cdot\rangle^{-M} \in L^1(\a*,\omega)$.
\end{lem}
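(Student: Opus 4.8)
The plan is to reduce the estimate to a statement about the Plancherel measure $\omega(d\lambda) = |\hcc(\lambda)|^{-2}\,d\lambda$ on $\a^\ast \cong \mathbb{R}^m$, where $m = \dim\al = \mathrm{rank}(G/K)$, and then split the integral into a neighbourhood of the origin and its complement. The key analytic input is the growth of the Harish-Chandra $\hcc$-function: there is a constant $C>0$ and an exponent such that $|\hcc(\lambda)|^{-2} \leq C(1+|\lambda|)^{2\ell}$ for all $\lambda\in\a^\ast$, where $2\ell$ is determined by the (multiplicities of the) positive roots; in fact the total number of positive indivisible roots counted with multiplicity, $\sum_{\alpha\in\Sigma^+}m_\alpha$, together with the rank, satisfies $m + \sum_{\alpha\in\Sigma^+}m_\alpha = d = \dim(G/K)$. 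This is the classical estimate found in \citet{helg2}, Chapter IV, and is the crucial ingredient.

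First I would write
\[
\int_{\a^\ast}\langle\lambda\rangle^{-M}\,\omega(d\lambda) = \int_{\a^\ast}(1+|\lambda|^2)^{-M/2}|\hcc(\lambda)|^{-2}\,d\lambda,
\]
and break this as $\int_{|\lambda|\leq 1} + \int_{|\lambda|> 1}$. On the ball $\{|\lambda|\leq 1\}$ the integrand $(1+|\lambda|^2)^{-M/2}$ is bounded, so finiteness reduces to local integrability of $|\hcc(\lambda)|^{-2}$ near the origin; this follows because $|\hcc(\lambda)|^{-2}$ is a continuous function on all of $\a^\ast$ (it vanishes where $\lambda$ is singular but never blows up — the singularities of $\hcc$ are poles, so $1/\hcc$ is entire on $\a^\ast$), hence bounded on compacts. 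On the exterior region I would use the polynomial bound $|\hcc(\lambda)|^{-2}\leq C(1+|\lambda|)^{2\ell}$ with $2\ell = \sum_{\alpha\in\Sigma^+}m_\alpha = d-m$, giving
\[
\int_{|\lambda|>1}(1+|\lambda|^2)^{-M/2}|\hcc(\lambda)|^{-2}\,d\lambda \leq C\int_{|\lambda|>1}(1+|\lambda|)^{d-m}(1+|\lambda|^2)^{-M/2}\,d\lambda.
\]
Passing to polar coordinates in $\mathbb{R}^m$, this is comparable to $\int_1^\infty \rho^{m-1}\rho^{d-m}\rho^{-M}\,d\rho = \int_1^\infty \rho^{d-1-M}\,d\rho$, which converges precisely when $M > d$. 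That is exactly the hypothesis, so the result follows.

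I expect the main obstacle to be pinning down the exterior growth estimate on $|\hcc(\lambda)|^{-2}$ with the precise exponent $d-m$, i.e.\ correctly bookkeeping that the degree of $|\hcc(\lambda)|^{-2}$ as a polynomial-type bound at infinity equals $\sum_{\alpha\in\Sigma^+}m_\alpha$ and that this sum is $d - m$. This is standard (it comes from the product formula for $\hcc$ in terms of the $\mathbf{c}$-functions of rank-one subsystems, each of which is a ratio of Gamma functions with the expected asymptotics), but it requires care with root multiplicities and with the distinction between $\a^\ast$ having dimension $m$ versus $G/K$ having dimension $d$; the case $m=d$ (Euclidean-type degeneration) does not occur here since $G$ is semisimple, so $d-m>0$ in general but the argument is uniform in any case. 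Everything else — boundedness of the integrand on compacts, continuity of $1/\hcc$, the polar-coordinate computation — is routine.
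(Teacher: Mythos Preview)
Your proposal is correct and follows essentially the same approach as the paper: both use the polynomial growth estimate $|\hcc(\lambda)|^{-1}\leq C_1+C_2|\lambda|^p$ (the paper cites \citet{helg2}, Proposition~7.2, p.~450, with $p=\tfrac{1}{2}\dim N$, which equals your $\tfrac{1}{2}\sum_{\alpha\in\Sigma^+}m_\alpha = \tfrac{1}{2}(d-m)$) and then reduce to a finite Euclidean integral over $\a^\ast\cong\mathbb{R}^m$. The only cosmetic difference is that the paper applies the global bound in one step rather than splitting into $|\lambda|\leq 1$ and $|\lambda|>1$; your near-origin discussion is unnecessary since the bound already holds everywhere, but it does no harm.
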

\begin{proof}
By standard arguments, one may check that $\int_{\mathbb{R}^d}\langle\xi\rangle^{-M}d\xi<\infty$, for all $M>d$. Writing $p=\frac{\dim N}{2}$, we have $d= \dim\a* + 2p$,
and hence $\int_{\a*}\langle\lambda\rangle^{-M+2p}d\lambda < \infty$ whenever $M>d$. By Proposition 7.2 on page 450 of \citet{helg2}, there are $C_1,C_2>0$ such that
	\begin{equation}\label{eq:hcc-bnd}
	|\hcc(\lambda)|^{-1} \leq C_1 + C_2|\lambda|^p \hspace{20pt} \forall\lambda\in\a*.
	\end{equation}
Let $C>0$ be such that $(C_1 + C_2|\lambda|^p)^2 < C(1+|\lambda|^2)^p$ for all $\lambda\in\a*$. Then
	\begin{equation}\label{eq:Est}
	\int_{\a*}\langle\lambda\rangle^{-M}\omega(d\lambda) = \int_{\a*}\langle\lambda\rangle^{-M}|\hcc(\lambda)|^{-2}d\lambda \leq C\int_{\a*}\langle\lambda\rangle^{-M+2p}d\lambda < \infty,
	\end{equation}
whenever $M>d$.
\end{proof}
\begin{proof}[Proof of Theorem \ref{thm:aniso}]
Much of this theorem may be proved by adapting proofs from the $\mathbb{R}^d$ case. For example, to prove Theorem \ref{thm:aniso} (\ref{S->H_>S'}), let $\mathcal{V}^{\psi,s}$ denote the space of all measurable functions $v$ on $\a*$ for which $\Psi^sv\in L^2(\a*,\omega)^W$, a Hilbert space with respect to the inner product
	\[\langle u,v\rangle = \int_{\al^\ast}\Psi(\lambda)^{2s}u(\lambda)\overline{v(\lambda)}\omega(d\lambda), \hspace{20pt} \forall u,v \in\mathcal{V}^{\psi,s}.\]
By viewing $\a*$ as a real vector space and using inequality (\ref{eq:hcc-bnd}) to relate $\omega$ to Lebesgue measure, the proof of Theorem 3.10.3 on page 208 of \citet{jacobI} may be easily adapted to show that
	\[\mathcal{S}(\a*)^W\hookrightarrow \mathcal{V}^{\psi,s}\hookrightarrow\mathcal{S}'(\a*)^W\]
is continuous. Noting Theorem \ref{thm:ComDiag}, Theorem \ref{thm:aniso} (\ref{S->H_>S'}) follows.

Proofs of Theorem \ref{thm:aniso} (\ref{ctsembed})--(\ref{s3s2s1}) are almost identical to their $\mathbb{R}^d$-based counterparts, see \citet{jacob94} \S 1, or \citet{hoh} pp.~46--48.

By Theorem \ref{thm:aniso} (\ref{Hs_>Hpsis_>Hrs}), Theorem \ref{thm:aniso} (\ref{SobEmbed}) will follow if we can prove the existence of a continuous embeddings
	\begin{equation}\label{eq:↪}
	H^s \hookrightarrow C_0(K|G|K),
	\end{equation}
for all $s>d$. Let $s>d$ and $u\in\mathcal{S}(K|G|K)$. By Lemma \ref{lem:<.>}, $\langle\cdot\rangle^{-s}\in L^2(\al^\ast,\omega)$, and by the spherical inversion formula (\ref{eq:SphInv}),
	\[|u(\sigma)| = \left|\int_{\a*}\phi_\lambda(\sigma)\hat{u}(\lambda)\omega(d\lambda)\right| \leq \int_{\a*}|\hat{u}(\lambda)|\omega(d\lambda) = \int_{\a*}\langle\lambda\rangle^{-s}\langle\lambda\rangle^s|\hat{u}(\lambda)|\omega(d\lambda),\]
for all $\sigma\in G$. By the Cauchy--Schwarz inequality,
	\[|u(\sigma)| \leq \Vert\langle\cdot\rangle^{-s}\Vert_{L^2(\a*,\omega)}\Vert\langle\cdot\rangle^s\hat{u}\Vert_{L^2(\a*,\omega)} = C\Vert u\Vert_s\]
for all $\sigma\in G$, where $C= \Vert\langle\cdot\rangle^{-s}\Vert_{L^2(\a*,\omega)}$. It follows that
	\[\Vert u \Vert_{C_0(K|G|K)} := \sup_{\sigma\in G}|u(\sigma)| \leq C\Vert u\Vert_s.\]
The embedding (\ref{eq:↪}) may then be obtained using a density argument.
\end{proof}

%
\subsection{Pseudodifferential Operators and Their Symbols}\label{subsec:Ops&Symbs}
A measurable mapping $q:G\times\a*\to\mathbb{C}$ will be called a \emph{negative definite symbol} if it is locally bounded, and if for each $\sigma\in G$, $q(\sigma,\cdot)$ is negative definite and continuous. If in addition $q$ is continuous in its first argument, we will call $q$ a \emph{continuous negative definite symbol}. 

Let $\mathcal{M}(G)$ denote the set of all measurable functions on $G$. 
\begin{thm}\label{thm:negdef}
Let $q$ be a negative definite symbol, and for each $f\in C^\infty_c(K|G|K)$ and $\sigma\in G$, define
	\begin{equation}\label{eq:q(sigma,D)}
	q(\sigma, D)f(\sigma) = \int_{\a*}\hat f(\lambda)\phi_\lambda(\sigma)q(\sigma,\lambda)\omega(d\lambda).
	\end{equation}
Then
\begin{enumerate}
\item\label{M} Equation (\ref{eq:q(sigma,D)}) defines a linear operator $q(\sigma,D):C_c^\infty(K|G|K)\to \mathcal{M}(G)$. 
\item\label{C} If $q$ is a continuous negative definite symbol, then $q(\sigma,D):C_c^\infty(K|G|K)\to C(G)$.
\item\label{K} If $q$ is $K$-bi-invariant in its first argument, then $q(\sigma,D)f$ is $K$-bi-invariant for all $f\in C_c^\infty(K|G|K)$.
\end{enumerate}
\end{thm}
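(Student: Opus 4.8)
The plan is to prove the three parts in order, since each builds on the previous one. For part (\ref{M}), the main point is to check that the integral in (\ref{eq:q(sigma,D)}) converges absolutely for each fixed $\sigma\in G$ and each $f\in C_c^\infty(K|G|K)$. First I would note that $\hat f\in\mathcal{S}(\a*)^W$ by Theorem \ref{thm:ComDiag} (or at least decays rapidly), that $|\phi_\lambda(\sigma)|\leq 1$ for $\lambda\in\a*$ (since $\phi_\lambda$ is positive definite with $\phi_\lambda(e)=1$, so $\|\phi_\lambda\|_\infty=\phi_\lambda(e)=1$), and that $q$ is locally bounded, hence $|q(\sigma,\cdot)|\leq c_\psi(1+|\lambda|^2)$ by Proposition \ref{prop:negdef} (\ref{c_phi}) applied to the negative definite function $q(\sigma,\cdot)$ --- actually it is cleaner here just to combine local boundedness of $q$ with the polynomial growth bound for each fixed $\sigma$. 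Then $|\hat f(\lambda)\phi_\lambda(\sigma)q(\sigma,\lambda)|$ is dominated by a rapidly decreasing function times a polynomial, which is integrable against $\omega$ using Lemma \ref{lem:<.>} (the Plancherel measure grows at most polynomially via (\ref{eq:hcc-bnd})). This shows $q(\sigma,D)f(\sigma)$ is well defined; measurability in $\sigma$ follows because the integrand is measurable in $(\sigma,\lambda)$ (here the local boundedness and the standing measurability hypothesis on $q$ are used) and one may apply Fubini/Tonelli together with a dominated convergence argument on a local dominating function. Linearity in $f$ is immediate from linearity of $f\mapsto\hat f$ and of the integral.

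For part (\ref{C}), I would upgrade measurability to continuity of $\sigma\mapsto q(\sigma,D)f(\sigma)$ under the extra hypothesis that $q$ is jointly continuous. The idea is a dominated convergence argument: if $\sigma_n\to\sigma$ in $G$, then $\phi_\lambda(\sigma_n)\to\phi_\lambda(\sigma)$ for each $\lambda$ (spherical functions are continuous, indeed $C^\infty$, on $G$) and $q(\sigma_n,\lambda)\to q(\sigma,\lambda)$ for each $\lambda$ by joint continuity. A uniform (in $n$) dominating function is obtained from $|\phi_\lambda(\sigma_n)|\leq 1$ together with a bound $|q(\sigma_n,\lambda)|\leq C_U(1+|\lambda|^2)$ valid for all $\sigma_n$ in a fixed compact neighbourhood $U$ of $\sigma$; such a uniform bound exists because on the compact set $U$ the function $q$ is bounded, and the polynomial growth constant can be taken uniform over $U$ --- this is where I expect the main technical care to be needed, since one must argue that $\sup_{\tau\in U}|q(\tau,\lambda)|$ grows at most polynomially in $\lambda$. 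One clean way: for each $\tau\in U$ Proposition \ref{prop:negdef} (\ref{c_phi}) gives $|q(\tau,\lambda)|\le c_{q(\tau,\cdot)}(1+|\lambda|^2)$, and inspecting the proof of that bound (which goes through the $\mathbb{R}^m$ estimate $|\psi(\lambda)|\le 2\sup_{|\eta|\le 1}|\psi(\eta)|(1+|\lambda|^2)$) shows the constant can be taken as a fixed multiple of $\sup_{\tau\in U,\,|\eta|\le 1}|q(\tau,\eta)|<\infty$, which is finite by joint continuity and compactness. Then dominated convergence gives $q(\sigma_n,D)f(\sigma_n)\to q(\sigma,D)f(\sigma)$, establishing $q(\sigma,D)f\in C(G)$.

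For part (\ref{K}), assume $q(\cdot,\lambda)$ is $K$-bi-invariant for each $\lambda$, i.e. $q(k_1\sigma k_2,\lambda)=q(\sigma,\lambda)$ for all $k_1,k_2\in K$. Since $f\in C_c^\infty(K|G|K)$, its spherical transform $\hat f$ is well defined, and the spherical functions $\phi_\lambda$ are themselves $K$-bi-invariant, so $\phi_\lambda(k_1\sigma k_2)=\phi_\lambda(\sigma)$. Substituting $k_1\sigma k_2$ for $\sigma$ in (\ref{eq:q(sigma,D)}) and using $K$-bi-invariance of $q$ in the first slot and of $\phi_\lambda$ leaves every factor of the integrand unchanged, hence $q(\sigma,D)f(k_1\sigma k_2)=q(\sigma,D)f(\sigma)$; that is, $q(\sigma,D)f\in\mathcal{F}(K|G|K)$. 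This part is essentially a one-line verification once the notational conventions for $K$-bi-invariance are unwound, so I do not anticipate any obstacle there. The genuinely delicate step is the uniform polynomial bound on $\sup_{\tau\in U}|q(\tau,\lambda)|$ needed for part (\ref{C}); everything else is dominated convergence plus bookkeeping.
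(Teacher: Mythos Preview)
Your proposal is correct and follows the same route as the paper, which for parts (\ref{M}) and (\ref{C}) simply refers to the Euclidean argument in Theorem~4.5.7 of \citet{jacobI} and for part (\ref{K}) notes that it is immediate from the $K$-bi-invariance of the spherical functions $\phi_\lambda$. One small correction to part (\ref{C}): you justify $\sup_{\tau\in U,\,|\eta|\le 1}|q(\tau,\eta)|<\infty$ by ``joint continuity and compactness'', but a continuous negative definite symbol is only assumed to be separately continuous in its two arguments; the finiteness you need follows instead directly from the standing hypothesis that $q$ is locally bounded on $G\times\a*$, and with that substitution your dominated convergence argument goes through unchanged.
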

\begin{proof}
Theorem \ref{thm:negdef} (\ref{M}) and (\ref{C}) are proved in a similar manner to Theorem 4.5.7 of \citet{jacobI}, while (\ref{K}) is immediate from the $K$-bi-invariance of each spherical function $\phi_\lambda$.
\end{proof}

\begin{mydef}\label{mydef:PSDO}
Operators of the form (\ref{eq:q(sigma,D)}), where $q$ is a negative definite symbol, will be called \emph{(spherical) pseudodifferential operators} on $G$. 
\end{mydef}

An important subclass of these operators first appeared for irreducible symmetric spaces in \cite{App_AusMathSoc}, with the symbol arising as the Gangolli exponent of a $K$-bi-invariant L\'evy process. Note that just as in the classical Euclidean case, the symbols arising from L\'evy processes are spatially independent, in the sense that they are constant in their first argument. We explore some specific examples of this below. In Section \ref{sec:GangOps&HYR}, we introduce a large class of examples pseudodifferential operators with spatial dependence.  

\begin{eg}\label{eg:psdo}
\begin{enumerate}
\item\label{eg:diff} \emph{Diffusion operators with constant coefficients.} Since $G$ is semisimple, the generator of a $K$-bi-invariant diffusion-type L\'evy process $Y$ on $G$ takes the form $\mathcal{A}  \coloneqq  \sum_{i,j=1}^da_{ij}X_iX_j$, where $a=(a_{ij})$ is an $\Ad(K)$-invariant, non-negative definite symmetric $d\times d$ matrix (c.f.~(\ref{eq:hunt_SSNCT})). As already noted, $\mathcal{A}\in{\bf D}_K(G)$; let $\beta(\mathcal{A},\lambda)$ denote the $\phi_\lambda$-eigenvalue of $\mathcal{A}$. Note that $\lambda\mapsto-\beta(\mathcal{A},\lambda)$ is the Gangolli exponent of $Y$ .

We claim that $(\sigma,\lambda)\mapsto-\beta(\mathcal{A},\lambda)$ is a continuous negative definite symbol, and the associated pseudodifferential operator is $-\mathcal{A}$. To see this, let $(\mu_t,t\geq 0)$ denote the convolution semigroup generated by $\mathcal{A}$, and let $(T_t,t\geq 0)$ be the associated Hunt semigroup, as defined in (\ref{eq:Huntsemi}). Then, given $f\in C_c^\infty(K|G|K)$ and $\sigma\in G$,
	\begin{equation}\label{eq:Af}
	\mathcal{A}f(\sigma) = \left.\frac{d}{dt}T_tf(\sigma)\right|_{t=0}.
	\end{equation}
By the spherical inversion formula (\ref{eq:SphInv}), for all $t\geq 0$,
	\[T_tf(\sigma) = \int_G\int_{\a*}\hat f(\lambda)\phi_\lambda(\sigma\tau)\omega(d\lambda)p_t(d\tau).\]
Recalling that $\hat{f}\in\mathcal{S}(\a*)$ whenever $f\in C_c^\infty(K|G|K)$, a Fubini argument may be applied to conclude that
	$T_tf(\sigma) = \int_{\a*}\hat{f}(\lambda)T_t\phi_\lambda(\sigma)\omega(d\lambda)$.
By Proposition \ref{prop:T_tɸ_λ(𝛔)} and Theorem \ref{thm:GLK}, 
	\[T_t\phi_\lambda = \hat\mu_t(\lambda)\phi_\lambda = e^{t\beta(\mathcal{A},\lambda)},\]
and so 
	\[T_tf(\sigma) = \int_{\a*}\hat{f}(\lambda)e^{t\beta(\mathcal{A},\lambda)}\phi_\lambda(\sigma)\omega(d\lambda).\]
By (\ref{eq:Af}), for all $f\in C_c^\infty(K|G|K)$ and $\sigma\in G$,
	\begin{equation}\label{eq:A}
	\mathcal{A}f(\sigma) 
	= \lim_{t\rightarrow 0}\int_{\a*}\hat f(\lambda)\left(\frac{e^{t\beta(\mathcal{A},\lambda)}-1}{t}\right)\phi_\lambda(\sigma)\omega(d\lambda).
	\end{equation}
Now, if $t>0$ and $\lambda\in\a*$, then
	\[\left|\hat f(\lambda)\left(\frac{e^{t\beta(\mathcal{A},\lambda)}-1}{t}\right)\phi_\lambda(\sigma)\right| \leq \left|\hat f(\lambda)\right|\left|\frac{e^{t\beta(\mathcal{A},\lambda)}-1}{t}\right| \leq \left|\hat f(\lambda)\right||\beta(\mathcal{A},\lambda)|.\]
Moreover, $|\hat f||\beta(\mathcal{A},\cdot)|\in L^1(\a*,\omega)^W$, since $\hat f\in\mathcal{S}(\a*)^W$, and $\beta(\mathcal{A},\cdot)$ is a $W$-invariant polynomial function. By the dominated convergence theorem, we may bring the limit through the integral sign in (\ref{eq:A})	to conclude that
	\begin{equation}\label{eq:A''}
	\begin{aligned}
	\mathcal{A}f(\sigma) &= \int_{\a*}\hat f(\lambda)\lim_{t\rightarrow 0}\left(\frac{e^{t\beta(\mathcal{A},\lambda)}-1}{t}\right)\phi_\lambda(\sigma)\omega(d\lambda) \\
	&= \int_{\a*}\hat{f}(\lambda)\phi_\lambda(\sigma)\beta(\mathcal{A},\lambda)\omega(d\lambda)
	\end{aligned}
	\end{equation}
for all $f\in C_c^\infty(K|G|K)$ and $\sigma\in G$. 
\item\label{eg:BM} \emph{Brownian motion.} As a special case of the above, $-\Delta$ is a pseudodifferential operator with symbol $|\rho|^2+|\lambda|^2$.
\item\label{eg:killing} \emph{Killed diffusions.} With minimal effort, the results of Example \ref{eg:psdo} (\ref{eg:diff}) may be extended to include killing. To see this, note first that such operators are always of the form $\mathcal{A}-c$, where $\mathcal{A}$ is a diffusion operator of the form considered above, and $c\geq 0$. The associated $\phi_\lambda$-eigenvalues must satisfy
	\[\beta(\mathcal{A}-c,\lambda) = \beta(\mathcal{A},\lambda)-c,\]
and hence using (\ref{eq:A''}) as well as the spherical inversion theorem,
	\begin{align*}
	(\mathcal{A}-c)f(\sigma) &= \int_{\a*}\hat{f}(\lambda)\phi_\lambda(\sigma)\beta(\mathcal{A},\lambda)\omega(d\lambda) - cf(\sigma) \\
	&= \int_{\a*}\hat{f}(\lambda)\phi_\lambda(\sigma)\beta(\mathcal{A}-c,\lambda)\omega(d\lambda),
	\end{align*}
for all $f\in C_c^\infty(K|G|K)$ and $\sigma\in G$.
\item \emph{L\'evy generators.} More generally, if $\mathcal{A}$ is the infinitesimal generator of a $K$-bi-invariant L\'evy process on $G$, and if $\psi$ is the corresponding Gangolli exponent, then $(\sigma,\lambda)\mapsto\psi(\lambda)$ is a continuous negative definite symbol, and $-\mathcal{A}$ is the corresponding pseudodifferential operator. This is proven in \citet{App_AusMathSoc} Theorem 5.1 in the case where $G/K$ is irreducible, and later in this paper as a special case of Theorem \ref{thm:psdo}.
\end{enumerate}
\end{eg}

%
\section{Gangolli Operators and the Hille--Yosida--Ray Theorem}\label{sec:GangOps&HYR}
We will soon define the class of pseudodifferential operators that will be of primary interest. In this section, we motivate this definition with a short discussion of the Hille--Yosida--Ray theorem, and prove that our class of operators are pseudodifferential operators in the sense of Definition \ref{mydef:PSDO}. We finish the section with some examples.

Let $E$ be a locally compact, Hausdorff space, let $\mathcal{C}$ be a closed subspace of $C_0(E)$, and let $\mathcal{F}(E)$ denote the space of all real-valued functions on $E$. A $C_0$-semigroup $(T_t,t\geq 0)$ defined on $C_0(K|G|K)$ is called \emph{sub-Feller} if for all $f\in C_0(E)$, and all $t\geq 0$,
	\[0\leq f\leq 1 ~\Rightarrow~ 0\leq T_tf\leq 1.\] 
A linear operator $\mathcal{A}:\Dom(\mathcal{A})\to\mathcal{F}(E)$ is said to satisfy the \emph{positive maximum principle}, if for all $f\in\Dom(\mathcal{A})$ and $x_0\in E$ such that $f(x_0)=\sup_{x\in E}f(x)\geq 0$, we have $\mathcal{A}f(x_0)\leq 0$.

The following theorem is an extended version of the Hille--Yosida--Ray theorem, which fully characterises the operators that extend to generators of sub-Feller semigroups on $C_0(E)$. Similar versions in which $E=\mathbb{R}^d$ may found in \citet{hoh}, pp.~53, and \citet{jacobI}, pp.~333. For a proof, see \citet{e&k}, pp.~165.

\begin{thm}[Hille--Yosida--Ray]\label{thm:hyr}
A linear operator $(\mathcal{A},\Dom(\mathcal{A}))$ on $C_0(E)$ is closable and its closure generates a strongly continuous, sub-Feller semigroup on $C_0(E)$ if and only if the following is satisfied:
\begin{enumerate}
\item\label{denselydefined} $\Dom(\mathcal{A})$ is dense in $C_0(E)$,
\item\label{PMP} $\mathcal{A}$ satisfies the positive maximum principle, and
\item\label{dense} There exists $\alpha>0$ such that $\Ran(\alpha I-\mathcal{A})$ is dense in $C_0(E)$.
\end{enumerate}
\end{thm}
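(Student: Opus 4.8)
The plan is to deduce the theorem from the Lumer--Phillips generation theorem for contraction semigroups, with the sub-Feller property extracted from the positive maximum principle (PMP). The useful preliminary remark is that a strongly continuous semigroup $(T_t,t\ge 0)$ on $C_0(E)$ is sub-Feller exactly when each $T_t$ is a positive contraction: if $0\le f\le1$ forces $0\le T_tf\le1$, rescaling gives $T_t\ge0$ on the whole positive cone, and splitting $f=f^+-f^-$ with $0\le f^\pm\le\|f\|_\infty$ gives $\|T_t\|\le1$; conversely, $0\le f\le1$ gives $\|f\|_\infty\le1$, so $T_tf\le\|T_tf\|_\infty\le1$ and $T_tf\ge0$. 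This reduces the whole statement to the interplay of positivity, contractivity, and generation.

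For necessity, suppose $\overline{\mathcal A}$ generates such a semigroup. Then \ref{denselydefined} is automatic. For \ref{PMP}, take $f\in\Dom(\mathcal A)$ and $x_0$ with $f(x_0)=\sup_xf(x)\ge0$; positivity gives $T_tf\le T_tf^+$ and contractivity $T_tf^+(x_0)\le\|f^+\|_\infty=f(x_0)$, so the forward difference quotient of $t\mapsto T_tf(x_0)$ at $0$ is nonpositive and $\mathcal Af(x_0)\le0$. For \ref{dense}, Hille--Yosida gives that $\alpha I-\overline{\mathcal A}$ is onto $C_0(E)$ for every $\alpha>0$; since $\Dom(\mathcal A)$ is a core for $\overline{\mathcal A}$, $\Ran(\alpha I-\mathcal A)$ is dense in $C_0(E)$.

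Sufficiency is the substantive direction. The first step is to upgrade PMP to the dissipativity bound $\|\alpha f-\mathcal Af\|_\infty\ge\alpha\|f\|_\infty$ for all $f\in\Dom(\mathcal A)$ and $\alpha>0$: choose $x_0$ with $|f(x_0)|=\|f\|_\infty$, and after replacing $f$ by $-f$ if needed assume $f(x_0)=\|f\|_\infty=\sup_xf(x)\ge0$, so $\mathcal Af(x_0)\le0$ and $\|\alpha f-\mathcal Af\|_\infty\ge\alpha f(x_0)-\mathcal Af(x_0)\ge\alpha\|f\|_\infty$. A densely defined dissipative operator is closable with dissipative closure $\overline{\mathcal A}$, and $\overline{\mathcal A}$ still satisfies the form of PMP that is needed, by a graph-norm approximation together with a compactness argument to locate a maximising point in the limit. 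Dissipativity makes $\alpha I-\overline{\mathcal A}$ injective with bounded inverse on its range, so closedness of $\overline{\mathcal A}$ forces $\Ran(\alpha I-\overline{\mathcal A})$ to be closed; being dense by \ref{dense}, it is all of $C_0(E)$, and a connectedness argument in $\alpha$ upgrades this to surjectivity of $\alpha I-\overline{\mathcal A}$ for every $\alpha>0$. Lumer--Phillips now shows $\overline{\mathcal A}$ generates a strongly continuous contraction semigroup $(T_t)$. Finally, the resolvent $R_\alpha=(\alpha I-\overline{\mathcal A})^{-1}$ is positive: if $g:=\alpha f-\overline{\mathcal A}f\ge0$ while $f$ is negative somewhere, then $-f$ attains a strictly positive maximum at some $x_0$, so PMP gives $\overline{\mathcal A}f(x_0)\ge0$ and $g(x_0)\le\alpha f(x_0)<0$, a contradiction. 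The exponential formula $T_tf=\lim_{n\to\infty}\big(\tfrac nt R_{n/t}\big)^nf$ then transfers positivity to each $T_t$, and together with $\|T_t\|\le1$ the preliminary remark gives the sub-Feller property.

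The main obstacle is the sufficiency direction, and specifically the translation of the ``soft'' positive maximum principle into the quantitative dissipativity estimate; once dissipativity is in hand, the remaining ingredients --- closability of dissipative operators, closedness of the range, Lumer--Phillips, and the exponential formula --- are standard semigroup theory that can be quoted. The most delicate technical point I anticipate needing to spell out is that PMP is inherited by the closure $\overline{\mathcal A}$, since a graph-convergent sequence $f_n\to f$, $\mathcal Af_n\to\overline{\mathcal A}f$ need not have its maxima attained at a single point, so one must use that elements of $C_0(E)$ attain their suprema and extract a convergent subsequence of near-maximisers.
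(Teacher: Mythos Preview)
Your proof is correct and follows the standard route to Hille--Yosida--Ray via dissipativity, Lumer--Phillips, and positivity of the resolvent. The paper does not give its own proof of this theorem; it simply cites Ethier and Kurtz, \emph{Markov Processes: Characterization and Convergence}, p.~165, where essentially the same argument appears. Your flagged subtlety---that PMP must be shown to pass to the closure $\overline{\mathcal A}$---is the right point of care, and the compactness argument you outline (using that $\{x:|f(x)|\ge\varepsilon\}$ is compact for $f\in C_0(E)$ to trap the near-maximisers) does go through; note that for the resolvent-positivity step you only ever invoke the strict-maximum case $\sup(-f)>0$, which is exactly where that compactness is available.
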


In their papers \citet{AppNgan_PMPLie,AppNgan_PMPSS}, Applebaum and Ngan found necessary and sufficient conditions for an operator defined on $C_c^\infty(K|G|K)$ to satisfy Theorem \ref{thm:hyr} (\ref{PMP}), for the cases $E=G$, $G/K$ and $K|G|K$. We will focus primarily on the case $E=K|G|K$, since this is the realm in which the spherical transform is available. 

A mapping $\nu:G\times\mathcal{B}\to[0,\infty]$ will be called a \emph{$K$-bi-invariant L\'evy kernel} if it is $K$-bi-invariant in its first argument, and if for all $\sigma\in G$, $\nu(\sigma,\cdot)$ is a $K$-bi-invariant L\'evy measure. Fix a system of exponential coordinate functions, as defined in Definition \ref{mydef:expcoords}, and adopt all of the notation conventions from this definition. 

\begin{mydef}\label{mydef:GangolliOperator}
An operator $\mathcal{A}:C_c^\infty(K|G|K)\to \mathcal{F}(G)$ will be called a \emph{Gangolli operator} if there exist mappings $c, a_{i,j}\in\mathcal{F}(K|G|K)$ ($1\leq i,j\leq d$), as well as a $K$-bi-invariant L\'evy kernel $\nu$, such that for all $f\in C_c^\infty(K|G|K)$ and $\sigma\in G$,
	\begin{equation}\label{eq:GangOp}
	\begin{aligned}
	\mathcal{A}f(\sigma) = -c(\sigma)&f(\sigma) + \sum_{i,j=1}^da_{i,j}(\sigma)X_iX_jf(\sigma) \\
	&+ \int_G\left(f(\sigma\tau)-f(\sigma)-\sum_{i=1}^dx_i(\tau)X_if(\sigma)\right)\nu(\sigma,d\tau),
	\end{aligned}
	\end{equation}	
and if for all $\sigma\in G$,
\begin{enumerate}
\item $c(\sigma)\geq 0$.
\item $a(\sigma):=(a_{i,j}(\sigma))$ is an $\Ad(K)$-invariant, non-negative definite, symmetric matrix.
\end{enumerate}
\end{mydef}

\begin{rmks}
\begin{enumerate}
\item Gangolli operators were first introduced in \citet{AppNgan_PMPSS} in compact symmetric spaces and for a more restrictive form of (\ref{eq:GangOp}).
By Theorem 3.2 (3) of \citet{AppNgan_PMPSS}, Gangolli operators map into $\mathcal{F}(K|G|K)$, and satisfy the positive maximum principle.
\item Equation (\ref{eq:GangOp}) may be viewed as a spatially dependent generalisation of (\ref{eq:hunt_SSNCT}), with an additional killing term $c$. As with previously, the absence of a drift term is due to the semisimplicity of $G$. 
\end{enumerate}
\end{rmks}

For a Gangolli operator $\mathcal{A}$ given by (\ref{eq:GangOp}), and for each $\sigma\in G$, we will denote by $\mathcal{A}^\sigma$ the operator obtained by freezing the coefficients of $\mathcal{A}$ at $\sigma$. Explicitly, for all $f\in C_c^\infty(K|G|K)$ and $\sigma'\in G$,
	\begin{align*}
	\mathcal{A}^\sigma f(\sigma') = -c(\sigma)&f(\sigma') + \sum_{i,j=1}^da_{i,j}(\sigma)X_iX_jf(\sigma') \\
	&+ \int_G\left(f(\sigma'\tau)-f(\sigma')-\sum_{i=1}^dx_i(\tau)X_if(\sigma')\right)\nu(\sigma,d\tau).
	\end{align*}	
For each $\sigma\in G$, $\mathcal{A}^\sigma$ is the generator of a killed $K$-bi-invariant L\'evy process on $G$. We continue to adopt the notation $\mathcal{A}^\sigma_D$ for the diffusion part, and $\beta(\mathcal{A}_D^\sigma,\lambda)$ for the $\phi_\lambda$-eigenvalue of $\mathcal{A}_D^\sigma$. 
	
Consider the following continuity conditions on the coefficients $(b,a,\nu)$ of $\mathcal{A}$:
\begin{itemize}
\item[(c1)] $c,a_{ij}$ are continuous, for $1\leq i,j\leq d$.
\item[(c2)] For each $f\in C_b(K|G|K)$, the mappings $\sigma\mapsto\int_Uf(\tau)\sum_{i=1}^dx_i(\tau)^2\nu(\sigma,d\tau)$ and $\sigma\mapsto\int_{U^c}f(\tau)\nu(\sigma,d\tau)$ are continuous from $G$ to $[0,\infty)$.
\end{itemize}

\begin{lem}\label{lem:GangSymb}
Let $\mathcal{A}$ be a Gangolli operator, and define $q:G\times\a*\to\mathbb{C}$ by
	\begin{equation}\label{eq:GangSymb}
	q(\sigma,\lambda) = -\beta(\mathcal{A}_D^\sigma,\lambda) + \int_G(1-\phi_\lambda(\tau))\nu(\sigma,d\tau), \hspace{20pt} \forall\sigma\in G,\lambda\in\a*.
	\end{equation}
Suppose (c1) and (c2) hold. Then $q$ is a continuous negative definite symbol.
\end{lem}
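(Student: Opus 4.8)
The statement to prove has three ingredients: that $q$ is locally bounded, that $q(\sigma,\cdot)$ is continuous and negative definite for every fixed $\sigma\in G$, and that $\sigma\mapsto q(\sigma,\lambda)$ is continuous for every fixed $\lambda\in\a*$ (measurability of $q$ then being automatic). For the negative-definiteness I would exploit the probabilistic meaning of the frozen operator: as observed after its definition, $\mathcal{A}^\sigma$ generates a killed $K$-bi-invariant L\'evy process on $G$, hence a convolution semigroup of sub-probability measures $(\mu^\sigma_t,t\geq 0)$, and by Gangolli's L\'evy--Khinchine formula (Theorem \ref{thm:GLK}), in the sub-probability form indicated in the remark following it, $\widehat{\mu^\sigma_t}=e^{-tq(\sigma,\cdot)}$ with $q(\sigma,\cdot)$ given by (\ref{eq:GangSymb}). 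Each $\mu^\sigma_t$ is a finite $K$-bi-invariant measure, so $\widehat{\mu^\sigma_t}$ is positive definite by Proposition \ref{prop:egs}(\ref{hatmu}), and the Schoenberg correspondence then forces $q(\sigma,\cdot)$ to be negative definite with $q(\sigma,0)\geq 0$. Continuity of $q(\sigma,\cdot)$ in $\lambda$ is clear for the polynomial term $-\beta(\mathcal{A}^\sigma_D,\lambda)$, and for the integral term follows by dominated convergence, using $|\phi_\lambda(\tau)|\leq 1$ on $\a*$ together with the near-$e$ estimate for $1-\phi_\lambda$ described below and the integrability built into the L\'evy kernel $\nu$.

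For continuity in $\sigma$ I would split (\ref{eq:GangSymb}) into three pieces. The term $-\beta(\mathcal{A}^\sigma_D,\lambda)$ depends on $\sigma$ only through $c(\sigma)$ and the $a_{ij}(\sigma)$, on which (for fixed $\lambda$) it depends affinely, so (c1) handles it. For the L\'evy part, fix a coordinate neighbourhood $U$ of $e$ as in Definition \ref{mydef:expcoords} and write $\int_G(1-\phi_\lambda)\,d\nu(\sigma,\cdot)=\int_U(1-\phi_\lambda)\,d\nu(\sigma,\cdot)+\int_{U^c}(1-\phi_\lambda)\,d\nu(\sigma,\cdot)$. The restriction of $1-\phi_\lambda$ to $U^c$ is bounded, continuous and $K$-bi-invariant, so — passing to real and imaginary parts if necessary — the second integral is continuous in $\sigma$ by the second clause of (c2). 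For the first integral the crucial point is that $\phi_\lambda$ is smooth with $\phi_\lambda(e)=1$ and $X_i\phi_\lambda(e)=0$ for $i=1,\dots,d$, the latter because $\phi_\lambda$, seen as a $K$-invariant function on $G/K$, has a critical point at $o$ (there being no nonzero $\Ad(K)$-fixed vectors in $\mathfrak{p}$ by semisimplicity). Consequently $1-\phi_\lambda(\tau)=h_\lambda(\tau)\sum_{i=1}^d x_i(\tau)^2$ on $U$ with $h_\lambda$ bounded, continuous and $K$-bi-invariant (the quotient extends continuously across $e$, and $\tau\mapsto\sum_i x_i(\tau)^2$ is $K$-bi-invariant by the transformation law for the coordinate functions together with $\Ad(K)$-invariance of $\langle\cdot,\cdot\rangle$). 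The first clause of (c2), applied to $h_\lambda$, then yields continuity of $\sigma\mapsto\int_U h_\lambda(\tau)\sum_i x_i(\tau)^2\,\nu(\sigma,d\tau)$.

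Local boundedness of $q$ comes out of the same estimates: on a product $C_1\times C_2$ of compacta, $|\beta(\mathcal{A}^\sigma_D,\lambda)|$ is bounded by (c1) and the polynomial growth in $\lambda$, while $\bigl|\int_G(1-\phi_\lambda)\,d\nu(\sigma,\cdot)\bigr|\leq\bigl(\sup_{\lambda\in C_2}\sup_U|h_\lambda|\bigr)\int_U\sum_i x_i(\tau)^2\,\nu(\sigma,d\tau)+2\,\nu(\sigma,U^c)$, and the two $\sigma$-dependent integrals are continuous, hence bounded on $C_1$, by (c2) with $f\equiv 1$. I expect the one genuinely non-routine step to be the factorisation of $1-\phi_\lambda$ near $e$: one needs the vanishing of the first-order jet of $\phi_\lambda$ at $e$ in precisely the $d$ directions appearing in the compensator of (\ref{eq:GangOp}), and the boundedness of the resulting $h_\lambda$ uniformly in $\lambda$ over compacta — both special to the symmetric-space geometry. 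Everything else is a transcription of the corresponding $\mathbb{R}^d$ arguments (cf.\ \citet{hoh}), with the spherical functions $\phi_\lambda$ replacing the exponentials $e^{i\langle\cdot,\cdot\rangle}$.
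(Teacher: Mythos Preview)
Your argument is correct and follows the same core strategy as the paper: freeze $\sigma$, interpret the resulting operator as a (sub-)L\'evy generator, and read off negative definiteness and continuity of $q(\sigma,\cdot)$ from Gangolli's formula. The paper handles the killing slightly differently: rather than invoking the sub-probability extension of Theorem~\ref{thm:GLK} and then Schoenberg, it subtracts $c(\sigma)$, applies Theorems~\ref{thm:HuntCor} and~\ref{thm:GLK} to the honest L\'evy generator $\mathcal{A}^\sigma+c(\sigma)$ to obtain that $\psi^\sigma:=q(\sigma,\cdot)-c(\sigma)$ is a continuous negative definite Gangolli exponent, and then adds back the non-negative constant $c(\sigma)$. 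This is marginally cleaner --- continuity in $\lambda$ comes packaged with Theorem~\ref{thm:GLK}, and there is no need to re-run Proposition~\ref{prop:egs} and Schoenberg --- but the content is the same.

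Where you go well beyond the paper is in the continuity-in-$\sigma$ argument: the paper simply declares this ``immediate from (c1) and (c2)'', whereas you supply the factorisation $1-\phi_\lambda=h_\lambda\sum_i x_i^2$ on $U$ that makes (c2) applicable. One small caution: your claim that $h_\lambda$ extends \emph{continuously} across $e$ is not automatic when $G/K$ is reducible, since the Hessian of $\phi_\lambda$ at $e$ need only be $\Ad(K)$-invariant rather than a scalar multiple of the inner product. This does not damage the argument, however: boundedness of $h_\lambda$ on $U$ (which you have from the second-order Taylor estimate) together with $\mu_\sigma(\{e\})=0$ for the finite measures $\mu_\sigma:=\sum_i x_i^2\,\nu(\sigma,\cdot)|_U$, whose weak continuity in $\sigma$ follows from (c2), already gives continuity of $\sigma\mapsto\int_U h_\lambda\,d\mu_\sigma$ by a Portmanteau-type argument.
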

\begin{proof}
That $q$ is continuous in its first argument is immediate from (c1) and (c2). Fix $\sigma\in G$ and consider $q(\sigma,\cdot)-c(\sigma)$. By Theorem \ref{thm:HuntCor}, there is a convolution semigroup $(\mu^\sigma_t,t\geq 0)$ generated by $\mathcal{A}^\sigma+c(\sigma)$, and by Theorem \ref{thm:GLK}, the corresponding Gangolli exponent is a continuous negative definite mapping on $\a*$, given by
	\[\psi^\sigma(\lambda) = q(\sigma,\lambda)-c(\sigma) \hspace{20pt} \forall\lambda\in\a*.\]
Therefore $q(\sigma,\cdot)$ is continuous, and negative definite since for fixed $\sigma$, $c(\sigma)$ is a non-negative constant.
\end{proof}

\begin{mydef}
The symbols described by Lemma \ref{lem:GangSymb} will be referred to as \emph{Gangolli symbols}, due to their connection with Gangolli's L\'evy--Khinchine formula.
\end{mydef}

\begin{rmks} 
\begin{enumerate}
\item Gangolli exponents are precisely those Gangolli symbols constant in their first argument.
\item The set of all Gangolli symbols forms a convex cone. 
\end{enumerate}
\end{rmks}

\begin{thm}\label{thm:psdo}
Let $\mathcal{A}$ and $q$ be as in Lemma \ref{lem:GangSymb}. Then $-\mathcal{A}$ is a pseudodifferential operator with symbol $q$.
\end{thm}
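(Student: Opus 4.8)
The plan is to reduce the asserted identity to the spatially-independent (frozen-coefficient) case — the last item of Example~\ref{eg:psdo} — and then prove that case by combining the Hunt semigroup of the associated convolution semigroup with a dominated-convergence argument in the spherical transform, exactly along the lines of the proof of Example~\ref{eg:psdo}(\ref{eg:diff}). The opening remark is that freezing a Gangolli operator at $\sigma$ and then evaluating at $\sigma$ recovers $\mathcal{A}$: for every $f\in C_c^\infty(K|G|K)$ one has $\mathcal{A}f(\sigma)=\mathcal{A}^\sigma f(\sigma)$, directly from (\ref{eq:GangOp}). Likewise, the symbol $r_\sigma(\sigma',\lambda):=q(\sigma,\lambda)$ obtained by freezing $q$ in its first slot is a continuous negative definite symbol (Lemma~\ref{lem:GangSymb}), so it defines a pseudodifferential operator by Theorem~\ref{thm:negdef}, and the right-hand side of the asserted identity, evaluated at $\sigma$, is precisely $\big(r_\sigma(\sigma',D)f\big)\big|_{\sigma'=\sigma}$. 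Hence it suffices to prove, for each fixed $\sigma\in G$, the spatially-constant statement
\[
-\mathcal{A}^\sigma f(\sigma') = \int_{\a*}\hat f(\lambda)\,\phi_\lambda(\sigma')\,q(\sigma,\lambda)\,\omega(d\lambda), \qquad \forall\,\sigma'\in G,
\]
and then to set $\sigma'=\sigma$.

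Next I would peel off the killing term. Recall from the proof of Lemma~\ref{lem:GangSymb} that $q(\sigma,\cdot)=c(\sigma)+\psi^\sigma$, where $\psi^\sigma$ is the Gangolli exponent of $\mathcal{L}^\sigma:=\mathcal{A}^\sigma+c(\sigma)I$; by the converse part of Theorem~\ref{thm:HuntCor}, $\mathcal{L}^\sigma$ generates a $K$-bi-invariant convolution semigroup $(\mu^\sigma_t,t\ge0)$ of probability measures on $G$, and by Theorem~\ref{thm:GLK} we have $\hat\mu^\sigma_t=e^{-t\psi^\sigma}$. Since the spherical inversion formula (\ref{eq:SphInv}) gives $c(\sigma)f(\sigma')=\int_{\a*}\hat f(\lambda)\phi_\lambda(\sigma')c(\sigma)\,\omega(d\lambda)$, the frozen identity above follows once we show that $-\mathcal{L}^\sigma$ is the pseudodifferential operator with symbol $\psi^\sigma$, i.e.
\[
\mathcal{L}^\sigma f(\sigma') = -\int_{\a*}\hat f(\lambda)\,\phi_\lambda(\sigma')\,\psi^\sigma(\lambda)\,\omega(d\lambda), \qquad\forall\,\sigma'\in G.
\]

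For this last step I would follow the template of Example~\ref{eg:psdo}(\ref{eg:diff}). Let $(T^\sigma_t,t\ge0)$ be the Hunt semigroup of $(\mu^\sigma_t,t\ge0)$ on $C_0(K|G|K)$. By Theorem~\ref{thm:HuntCor}, $C_c^\infty(K|G|K)\subseteq\Dom\mathcal{L}^\sigma$ and there the abstract generator agrees with the explicit Hunt expression, so $\mathcal{L}^\sigma f=\lim_{t\to0}t^{-1}(T^\sigma_t f-f)$ in $C_0$-norm, hence pointwise. Expanding $T^\sigma_t f$ via (\ref{eq:SphInv}), a Fubini argument — legitimate because $\hat f\in\mathcal{S}(\a*)^W$, $\omega$ has at most polynomial growth by the Harish-Chandra estimate (\ref{eq:hcc-bnd}), and each $\mu^\sigma_t$ is a probability measure — combined with Proposition~\ref{prop:T_tɸ_λ(𝛔)} and Theorem~\ref{thm:GLK} gives $T^\sigma_t f(\sigma')=\int_{\a*}\hat f(\lambda)e^{-t\psi^\sigma(\lambda)}\phi_\lambda(\sigma')\,\omega(d\lambda)$. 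Forming the difference quotient, the integrand is dominated, uniformly in $t>0$, by $|\hat f(\lambda)|\,|\psi^\sigma(\lambda)|\le c_{\psi^\sigma}|\hat f(\lambda)|(1+|\lambda|^2)$: here $|e^{-z}-1|\le|z|$ whenever $\operatorname{Re}z\ge0$, which applies since $\psi^\sigma$ is negative definite with $\psi^\sigma(0)\ge0$ and hence $\operatorname{Re}\psi^\sigma\ge0$; $|\phi_\lambda(\sigma')|\le1$ for $\lambda\in\a*$ by positive-definiteness of $\phi_\lambda$; and the last inequality is Proposition~\ref{prop:negdef}(\ref{c_phi}). This dominating function lies in $L^1(\a*,\omega)$, again by (\ref{eq:hcc-bnd}) together with $\hat f$ Schwartz, so dominated convergence lets me move the limit inside, producing the displayed formula for $\mathcal{L}^\sigma f$. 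Adding back $c(\sigma)f$ and setting $\sigma'=\sigma$ then yields $-\mathcal{A}f(\sigma)=\int_{\a*}\hat f(\lambda)\phi_\lambda(\sigma)q(\sigma,\lambda)\,\omega(d\lambda)$, which is the claim by Definition~\ref{mydef:PSDO}.

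The main obstacle is the dominated-convergence step: one must be certain that $|\hat f(\lambda)|(1+|\lambda|^2)$ is integrable against Plancherel measure, which rests on the polynomial estimate (\ref{eq:hcc-bnd}) for $|\hcc(\lambda)|^{-1}$ (much as in Lemma~\ref{lem:<.>}), and that the abstract semigroup generator genuinely coincides with the explicit Gangolli-type operator on $C_c^\infty(K|G|K)$, with that space contained in its domain — both guaranteed by Theorem~\ref{thm:HuntCor}. The two reductions and the Fubini interchanges are then routine bookkeeping with the spherical transform.
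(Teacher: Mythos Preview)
Your argument is correct, but it takes a genuinely different route from the paper's own proof. The paper decomposes $\mathcal{A}$ as $\mathcal{A}_D^\sigma+\mathcal{A}_J$ and treats the two pieces separately: the diffusion part is handled by Example~\ref{eg:psdo}(\ref{eg:killing}), while the jump part is computed directly by applying spherical inversion to $f$, pushing the resulting $\omega$-integral through $\mathcal{A}_J$ (this requires a differentiation-through-the-integral Claim and a Fubini step), and then invoking the functional equation (\ref{eq:SphIntForm}) to evaluate $\mathcal{A}_J\phi_\lambda$. You instead freeze coefficients to reduce to a single constant-coefficient L\'evy generator $\mathcal{L}^\sigma$, and then run the semigroup/dominated-convergence argument of Example~\ref{eg:psdo}(\ref{eg:diff}) for the \emph{full} L\'evy generator rather than just its diffusion part, the key input being Theorem~\ref{thm:GLK} for $\hat\mu_t^\sigma=e^{-t\psi^\sigma}$ and the uniform bound $|e^{-t\psi^\sigma}-1|\le t|\psi^\sigma|$. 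Your approach is more unified and avoids the somewhat delicate Claim about interchanging $X_i$ with the $\omega$-integral; the paper's approach, on the other hand, produces the explicit eigen-relation $\mathcal{A}_J\phi_\lambda=\big(\int_G(\phi_\lambda-1)\,\nu(\sigma,\cdot)\big)\phi_\lambda$ along the way, which is of independent interest. Note also that your argument effectively supplies the proof of Example~\ref{eg:psdo}(4), which the paper defers to Theorem~\ref{thm:psdo} itself; there is no circularity, since you establish the L\'evy case from scratch rather than quoting it.
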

\begin{proof}
By Theorem \ref{thm:negdef} and Lemma \ref{lem:GangSymb}, $f\mapsto-\int_{\a*}\hat f(\lambda)\phi_\lambda(\sigma)q(\sigma,\lambda)\omega(d\lambda)$ is a well-defined mapping from $C_c^\infty(K|G|K)\to C(G)$. We show that it is equal to $\mathcal{A}$.

Let $\mathcal{A}_J$ denote the non-local (i.e.~jump) part of $\mathcal{A}$, so that
	\begin{equation}\label{eq:A_Jf}
	\mathcal{A}_Jf(\sigma) = \int_G\left(f(\sigma\tau)-f(\sigma)-\sum_{i=1}^dx_i(\tau)X_if(\sigma)\right)\nu(\sigma,d\tau)
	\end{equation}
for all $f\in C_c^\infty(K|G|K)$ and $\sigma\in G$. By design,
	\begin{equation}\label{eq:A=A^sigma_D+A_J}
	\mathcal{A}f(\sigma) = \mathcal{A}_D^\sigma f(\sigma) + \mathcal{A}_Jf(\sigma), \hspace{20pt} \forall f\in C_c^\infty(K|G|K),\;\sigma\in G.
	\end{equation}

For the diffusion part of $\mathcal{A}$, note that for each $\sigma\in G$, $\mathcal{A}^\sigma_D$ is an operator of the form considered in Example \ref{eg:psdo} (\ref{eg:killing}), and in particular satisfies
	\begin{equation}\label{eq:A^sigma_D}
	\mathcal{A}_D^\sigma f(\sigma) =  \int_{\a*}\hat{f}(\lambda)\beta(\mathcal{A}_D^\sigma,\lambda)\phi_\lambda(\sigma)\omega(d\lambda),
	\end{equation}
for all $f\in C_c^\infty(K|G|K)$.

Consider now the jump part $\mathcal{A}_J$. By Lemma 2.3 on page 39 of \citet{Liao_InvMark}, for each fixed $\sigma\in G$, and for all $f\in C_b^2(K|G|K)$, the integrand on the right-hand side of (\ref{eq:A_Jf}) is absolutely integrable with respect to $\nu(\sigma,\cdot)$. Therefore, (\ref{eq:A_Jf}) may be used to extend the domain of $\mathcal{A}_J$ so as to include $C_b^2(K|G|K)$. We do so now, and (without any loss of precision) denote the extension by $\mathcal{A}_J$. 

Let us proceed similarly to \citet{AppNgan_PMPSS} Section 5, and define for each $\sigma\in G$ a linear functional $\mathcal{A}_{J,\sigma}:C_b^2(K|G|K)\to\mathbb{C}$ by
	\[\mathcal{A}_{J,\sigma} f:= \mathcal{A}_J\left(L_\sigma^{-1}f\right)(\sigma), \hspace{20pt} \forall\sigma\in G,\; f\in C_b^2(K|G|K).\]
Then $\mathcal{A}_Jf(\sigma) = \mathcal{A}_{J,\sigma}(L_\sigma f)$, and hence
	\[\mathcal{A}_{J,\sigma}\phi_\lambda = \int_G\left(L_\sigma^{-1}\phi_\lambda(\sigma\tau) - L_\sigma^{-1}\phi_\lambda(\sigma) - \sum_{i=1}^dx_i(\tau)X_iL_\sigma^{-1}\phi_\lambda(\sigma)\right)\nu(\sigma,d\tau),\]
for all $\sigma\in G$ and $f\in C_b^2(K|G|K)$. Moreover, the integrand on the right-hand side is absolutely $\nu(\sigma,\cdot)$-integrable, for all $\lambda\in\a*$ and $\sigma\in G$. Since $\phi_\lambda(e)=1$, and $X\phi_\lambda(e)=0$ for all $X\in\mathfrak{p}$ (Theorem 5.3 (b) of \citet{Liao_InvMark}), 
	\begin{align*}
	L_\sigma^{-1}\phi_\lambda(\sigma\tau) - L_\sigma^{-1}\phi_\lambda(\sigma) - \sum_{i=1}^dx_i(\tau)X_iL_\sigma^{-1}\phi_\lambda(\sigma) = \phi_\lambda(\tau) - 1.
	\end{align*}
Thus, for all $\lambda\in\a*$ and $\sigma\in G$, $\phi_\lambda-1$ is absolutely $\nu(\sigma,\cdot)$-integrable, and
	\begin{equation}\label{eq:P^sigma}
	\mathcal{A}_{J,\sigma}\phi_\lambda = \int_G\left(\phi_\lambda(\tau) - 1\right)\nu(\sigma,d\tau).
		\end{equation}
A standard argument involving the functional equation (\ref{eq:SphIntForm}) for spherical functions may now be applied in precisely the same way as in \citet{AppNgan_PMPSS} (5.3)--(5.7), to infer that 
	\begin{equation}\label{eq:P}
	\mathcal{A}_J\phi_\lambda(\sigma) = \int_G(\phi_\lambda(\sigma\tau) - \phi_\lambda(\sigma))\nu(\sigma,d\tau) = \int_G(\phi_\lambda(\tau) - 1)\phi_\lambda(\sigma)\nu(\sigma,d\tau)
	\end{equation}
for all $\sigma\in G$ and $\lambda\in\a*$.

Finally, let $f\in C_c^\infty(K|G|K)$, and observe that by the spherical inversion formula 
	\begin{equation}\label{eq:Pf(sigma)}
	\begin{aligned}
	\mathcal{A}_Jf(\sigma) &= \int_G\Bigg(\int_{\a*}\phi_\lambda(\sigma\tau)\hat{f}(\lambda)\omega(d\lambda) - \int_{\a*}\phi_\lambda(\sigma)\hat{f}(\lambda)\omega(d\lambda) \\
	&\hspace{70pt}-\sum_{i=1}^dx_i(\tau)X_i\left[\int_{\a*}\phi_\lambda\hat{f}(\lambda)\omega(d\lambda)\right](\sigma)\Bigg)\nu(\sigma,d\tau)
	\end{aligned}
	\end{equation}
	
\begin{claim}
For all $X\in\mathfrak{p}$ and $f\in C_c^\infty(K|G|K)$,
	\[X\left[\int_{\a*}\phi_\lambda\hat{f}(\lambda)\omega(d\lambda)\right](\sigma)=\int_{\a*}X\phi_\lambda(\sigma)\hat{f}(\lambda)\omega(d\lambda).\]
\end{claim}
\begin{pf}
This is a fairly standard differentiation-through-integration-sign argument. First note that by translation invariance of $X$, it suffices to prove the claim for $\sigma=e$. Now,
	\begin{align*}
	X\left[\int_{\a*}\phi_\lambda\hat{f}(\lambda)\omega(d\lambda)\right](e) &= \left.\frac{d}{dt}\int_{\a*}\phi_\lambda(\exp tX)\hat{f}(\lambda)\omega(d\lambda)\right|_{t=0} \\
	&= \lim_{t\rightarrow 0}\int_{\a*}\frac{\phi_\lambda(\exp tX)-1}{t}\hat{f}(\lambda)\omega(d\lambda).
	\end{align*}
The claim will follow if we can apply the dominated convergence theorem to bring the above limit through the integral sign. By the mean value theorem, for each $t>0$ and $\lambda\in\a*$, 
	\[\frac{\phi_\lambda(\exp tX)-1}{t}=X\phi_\lambda(\exp t'X),\]
for some $0<t'<t$, and hence $\left|\frac{\phi_\lambda(\exp tX)-1}{t}\right| \leq \Vert X\phi_\lambda\Vert_\infty$  for all $t>0$. By \citet{helg2SuppNotes} Theorem 1.1 (iii), $\Vert X\phi_\lambda\Vert_\infty\leq C(1+|\lambda|)$, for some some constant $C>0$. Thus, for $f\in C_c^\infty(K|G|K)$, $\lambda\in\a*$ and $t>0$,
	\[\left|\frac{\phi_\lambda(\exp tX)-1}{t}\hat{f}(\lambda)\right| \leq C(1+|\lambda|)|\hat{f}(\lambda)|,\]
and clearly $C(1+|\cdot|)\hat{f} \in L^1(\a*)^W$, since $\hat{f}\in\mathcal{S}(\a*)$. Hence we may apply dominated convergence as desired, and the claim follows.
\end{pf}

Applying the claim to (\ref{eq:Pf(sigma)}), for $f\in C_c^\infty(K|G|K)$ and $\sigma\in G$,
	\begin{align*}
	\mathcal{A}_Jf(\sigma) &= \int_G\Bigg(\int_{\a*}\phi_\lambda(\sigma\tau)\hat{f}(\lambda)\omega(d\lambda) - \int_{\a*}\phi_\lambda(\sigma)\hat{f}(\lambda)\omega(d\lambda) \\
	&\hspace{120pt}-\sum_{i=1}^dx_i(\tau)\int_{\a*}X_i\phi_\lambda(\sigma)\hat{f}(\lambda)\omega(d\lambda)\Bigg)\nu(\sigma,d\tau) \\
	&=\int_G\int_{\a*}\hat f(\lambda)\left(\phi_\lambda(\sigma\tau)-\phi_\lambda(\sigma)-\sum_{i=1}^dx_i(\tau)X_i\phi_\lambda(\sigma)\right)\omega(d\lambda)\nu(\sigma,d\tau). 
	\end{align*}
By the Fubini theorem,
	\begin{align*}
	\mathcal{A}_Jf(\sigma) &= \int_{\a*}\hat f(\lambda)\int_G\left(\phi_\lambda(\sigma\tau)-\phi_\lambda(\sigma)-\sum_{i=1}^dx_i(\tau)X_i\phi_\lambda(\sigma)\right)\nu(\sigma,d\tau)\omega(d\lambda) \\
	&= \int_{\a*}\hat f(\lambda)\mathcal{A}_J\phi_\lambda(\sigma)\omega(d\lambda) 
	\end{align*}
for all $f\in C_c^\infty(K|G|K)$ and $\sigma\in G$. It follows by (\ref{eq:P}) that
	\begin{equation}\label{eq:P'}
	\mathcal{A}_Jf(\sigma) = \int_{\a*}\hat f(\lambda)\phi_\lambda(\sigma)\int_G(\phi_\lambda(\tau) - 1)\nu(\sigma,d\tau)\omega(d\lambda)
	\end{equation}
for all $f\in C_c^\infty(K|G|K)$ and $\sigma\in G$. 

The result now follows by substituting (\ref{eq:P'}) and (\ref{eq:A^sigma_D}) into (\ref{eq:A=A^sigma_D+A_J}).
\end{proof}

\begin{eg}\label{eg:GangSymbs}
\begin{enumerate}
\item Let $u\in C(K|G|K)$ be non-negative, and let $v:\a*\to\mathbb{C}$ be a Gangolli exponent. Then $q:G\times\a*\to\mathbb{C}$ given by
	\[q(\sigma,\lambda) = u(\sigma)v(\lambda) \hspace{20pt} \forall\sigma\in G,\;\lambda\in\a*\]
is a Gangolli symbol. Indeed, by Theorem \ref{thm:GLK}, there exists a sub-diffusion operator $\mathcal{L}\in{\bf D}_K(G)$ and a $K$-bi-invariant L\'evy measure $\nu$ such that for all $\lambda\in\a*$,
	\[v(\lambda) = -\beta(\mathcal{L},\lambda) + \int_G(1-\phi_\lambda(\sigma))\nu(d\tau),\]
and hence for all $\sigma\in G$ and $\lambda\in\a*$,
	\[q(\sigma,\lambda) = -\beta(u(\sigma)\mathcal{L},\lambda) + \int_G(1-\phi_\lambda(\sigma))u(\sigma)\nu(d\tau).\]
If $\mathcal{L} = -c + \sum_{i,j=1}^da_{ij}X_iX_j$, where $c\geq 0$ and $a=(a_{ij})$ is an $\Ad(K)$-invariant, non-negative definite symmetric matrix, then the characteristics are $q$ are 
	\[c(\sigma) := u(\sigma)c, \hspace{10pt} a(\sigma) = u(\sigma)a,\hspace{5pt} \text{and} \hspace{5pt} \nu(\sigma,\cdot)=u(\sigma)\nu.\] 
Since $u$ is  non-negative, continuous and $K$-bi-invariant, the conditions of Definition \ref{mydef:GangolliOperator} are easily verified for these characteristics, as are (c1) and (c2).
\item \emph{Hyperbolic plane.} As described in \citet{helg2} (pp.~29--31), the Poincar\'e disc model $D$ of the hyperbolic plane is isomorphic to $SU(1,1)/SO(2)$. Moreover, $D$ is a symmetric space of noncompact type, with spherical functions are given by the Legendre functions
	\[\phi_\lambda(z) = P_{\frac{1}{2}+i\lambda}\big(\cosh d_{\mathbb{H}}(0,z)\big), \hspace{20pt} \forall z\in D, \lambda\in\mathbb{R}.\]
(see \citet{helg1} Proposition 2.9, pp.~406). Since $D$ is irreducible and $\dim D>1$, by Theorem 3.3 of \citet{AppNgan_PMPSS}, diffusion operators on $D$ must be multiples of the Laplace--Beltrami operator, and the symbols of Feller processes take the simplified form
	\[q(z,\lambda)=c(z)\left(\frac{1}{4}+\lambda^2\right)+\int_0^\infty\big\{1-P_{\frac{1}{2}+i\lambda}(\cosh r)\big\}\nu(z,dr),\]
for all $z\in D$ and $\lambda\in\mathbb{R}$. The constant coefficient (i.e.~L\'evy) case of this formula was discovered by Getoor --- see \citet{getoor} Theorem 7.4.
\end{enumerate}
\end{eg}
%
\section{Construction of Sub-Feller Semigroups}\label{sec:J&H}
In this section we tackle the third condition of Hille--Yosida--Ray (Theorem \ref{thm:hyr}), when $E=K|G|K$. To this end, we seek conditions on a symbol $q$ so that, for some $\alpha>0$,
	\begin{equation}\label{eq:hyr3}
	\overline{\Ran(\alpha+q(\sigma,D))}=C_0(K|G|K).
	\end{equation}
Our approach is based primarily on \citet{jacob94} and \citet{hoh} Section 4. Now that we are on the level of operators, there are more arguments that closely resemble these sources. In these cases, proofs are not expanded in great detail, and may be omitted entirely to save space. Instead, we aim to emphasise what does not carry over from the Euclidean space setting. 

For a mapping $q:G\times\a*\to\mathbb{R}$ and for each $\lambda,\eta\in\a*$, $\sigma\in G$, define
	\begin{equation}\label{eq:F}
	F_{\lambda,\eta}(\sigma) = \phi_{-\lambda}(\sigma)q(\sigma,\eta).
	\end{equation}
Observe that if $q(\cdot,\eta)\in L^2(K|G|K)$ for all $\eta\in\a*$, then $F_{\lambda,\eta}\in L^2(K|G|K)$, and we may consider the spherical transform $\hat{F}_{\lambda,\eta}\in L^2(\a*,\omega)$, given by
	\[\hat{F}_{\lambda,\eta}(\mu) = \int_G\phi_{-\mu}(\sigma)\phi_{-\lambda}(\sigma)q(\sigma,\eta)d\sigma, \hspace{20pt} \forall\mu\in\a*.\] 

To motivate the introduction of $F_{\lambda,\eta}$, consider the case $G=\mathbb{R}^d$, $K=\{0\}$. In this case, the so-called frequency shift property for the Fourier transform says that
	\begin{equation}\label{eq:hat{q}}
	\begin{aligned}
	\hat{F}_{\lambda,\eta}(\mu) &= \frac{1}{(2\pi)^{d/2}}\int_{\mathbb{R}^d}e^{-i\mu\cdot x}e^{-i\lambda\cdot x}q(x,\eta)dx \\
	&= \frac{1}{(2\pi)^{d/2}}\int_{\mathbb{R}^d}e^{-i(\mu+\lambda)\cdot x}q(x,\eta)dx = \hat{q}(\lambda+\mu,\eta),
	\end{aligned}
	\end{equation}
where $^{\wedge}$ denotes the Fourier transform taken in the first argument of $q$. \citet{hoh} and \citet{jacobII} make use of bounds on $\hat{q}(\lambda-\mu,\eta)$, and $\hat{F}_{\lambda,\eta}(-\mu)$ will assume an analogous role in work to come.

As in previous work, let $\psi:\a*\to\mathbb{R}$ be a fixed real-valued, continuous negative definite function satisfying (\ref{eq:psi_est}) for some fixed $r>0$. The next lemma is an analogue of Lemma 2.1 of \citet{jacob94}. See also \citet{hoh} Lemma 4.2, pp.~48. The primary difference in this work is the presence of integer powers of $\sqrt{-\Delta}$, which replace the multinomial powers of $\frac{\partial}{\partial x_1},\ldots,\frac{\partial}{\partial x_d}$ of the $\mathbb{R}^d$ setting. 

One advantage of this approach is that $(-\Delta)^{\beta/2}$ ($\beta\in\mathbb{N}$) has a global definition that does not depend on our choice of local coordinates. Another advantage is that we know its symbol --- see equations (\ref{eq:phi_mu}) and (\ref{eq:psdo}) below.

\begin{lem}\label{lem:Φ_β}
Let $M\in\mathbb{N}$, $q:G\times\a*\to\mathbb{R}$ and suppose $q(\cdot,\lambda)\in C^M_c(K|G|K)$ for all $\lambda\in\a*$. Suppose that for each $\beta\in\{0,1,\ldots,M\}$, there is a non-negative function $\Phi_\beta\in L^1(K|G|K)$ such that
	\begin{equation}\label{eq:Phi_betacond}
		\left|(-\Delta)^{\beta/2}F_{\lambda,\eta}(\sigma)\right| \leq \Phi_\beta(\sigma)\langle\lambda\rangle^M(1+\psi(\eta)),
		\end{equation}
for all $\lambda,\eta\in\a*$, $\sigma\in G$. Then there is a constant $C_M>0$ such that
	\begin{equation}\label{eq:Phi_beta}
	\left|\hat{F}_{\lambda,\eta}(\mu)\right| \leq C_M\sum_{\beta=0}^M\Vert\Phi_\beta\Vert_1\langle\lambda+\mu\rangle^{-M}(1+\psi(\eta)),
	\end{equation}
for all $\lambda,\mu,\eta\in\a*$, where $\Vert\cdot\Vert_1$ denotes the usual norm on the Banach space $L^1(K|G|K)$.
\end{lem}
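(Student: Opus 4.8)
The plan is to run the argument of \citet{jacob94} (Lemma 2.1) and \citet{hoh} (Lemma 4.2), with the fractional Laplacian $(-\Delta)^{\beta/2}$ playing the role of the constant‑coefficient differential operators $D^\alpha$ used there. The only ingredient of the Euclidean proof with no counterpart here is the frequency‑shift identity $\hat F_{\lambda,\eta}(\mu)=\hat q(\lambda+\mu,\eta)$ of \eqref{eq:hat{q}}: since on a symmetric space the pointwise product $\phi_{-\mu}\phi_{-\lambda}$ is not itself a spherical function, $\hat F_{\lambda,\eta}(\mu)$ is \emph{not} the spherical transform of $q(\cdot,\eta)$ evaluated at a shifted point. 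Supplying a substitute for this is the crux; my plan is to extract a decay estimate in $\mu$, then, using a symmetry, one in $\lambda$, and finally to splice the two together.

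\emph{Step 1 (decay in $\mu$).} Each $\phi_\nu$ is a generalised eigenfunction of $-\Delta$ with eigenvalue $|\rho|^2+|\nu|^2$ (Example \ref{eg:psdo}(\ref{eg:BM}); cf.\ \eqref{eq:phi_mu}, \eqref{eq:psdo}), and $(-\Delta)^{\beta/2}$ is symmetric on $L^2(K|G|K)$, so that for every $g\in C^M_c(K|G|K)$ and $0\le\beta\le M$ one has $\widehat{(-\Delta)^{\beta/2}g}(\mu)=(|\rho|^2+|\mu|^2)^{\beta/2}\hat g(\mu)$ for all $\mu\in\a*$. Fix $\lambda,\eta\in\a*$. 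Since $F_{\lambda,\eta}=\phi_{-\lambda}\,q(\cdot,\eta)$ is the product of the smooth function $\phi_{-\lambda}$ with $q(\cdot,\eta)\in C^M_c(K|G|K)$, it lies in $C^M_c(K|G|K)$, so the identity applies to it. Estimating the left-hand side by $\int_G|(-\Delta)^{\beta/2}F_{\lambda,\eta}(\sigma)|\,|\phi_{-\mu}(\sigma)|\,d\sigma\le\int_G|(-\Delta)^{\beta/2}F_{\lambda,\eta}(\sigma)|\,d\sigma$ — using $|\phi_{-\mu}|\le\phi_0\le 1$ — and invoking the hypothesis \eqref{eq:Phi_betacond} gives
\[
(|\rho|^2+|\mu|^2)^{\beta/2}\,|\hat F_{\lambda,\eta}(\mu)|\ \le\ \|\Phi_\beta\|_1\,\langle\lambda\rangle^M(1+\psi(\eta)),\qquad 0\le\beta\le M.
\]
Summing over $\beta$ and using the elementary bound $\langle\mu\rangle^M\le c_M\sum_{\beta=0}^M(|\rho|^2+|\mu|^2)^{\beta/2}$ (immediate from $|\mu|\le(|\rho|^2+|\mu|^2)^{1/2}$ and $1\le(|\rho|^2+|\mu|^2)^{1/2}/|\rho|$) then yields a bound for $|\hat F_{\lambda,\eta}(\mu)|$ carrying the decay factor $\langle\mu\rangle^{-M}$.

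\emph{Step 2 (decay in $\lambda$).} The defining formula $\hat F_{\lambda,\eta}(\mu)=\int_G\phi_{-\mu}(\sigma)\phi_{-\lambda}(\sigma)q(\sigma,\eta)\,d\sigma$ is symmetric in $\lambda$ and $\mu$, so $\hat F_{\lambda,\eta}(\mu)=\hat F_{\mu,\eta}(\lambda)$. As \eqref{eq:Phi_betacond} is assumed for \emph{every} value of the first parameter, Step 1 applied to $F_{\mu,\eta}$ gives, in exactly the same way,
\[
(|\rho|^2+|\lambda|^2)^{\beta/2}\,|\hat F_{\lambda,\eta}(\mu)|\ \le\ \|\Phi_\beta\|_1\,\langle\mu\rangle^M(1+\psi(\eta)),\qquad 0\le\beta\le M.
\]

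\emph{Step 3 (splicing; the main obstacle).} It remains to combine the one-variable estimates of Steps 1 and 2 into a single bound carrying the joint weight $\langle\lambda+\mu\rangle^{-M}$. This is the step that genuinely departs from \citet{jacob94,hoh}, where the weight $\langle\lambda+\mu\rangle^{-M}$ appears automatically from the frequency shift; here it must be produced by hand, and I expect this to be where essentially all of the work lies. My plan is to insert the two families of estimates into an expansion of $\langle\lambda+\mu\rangle^M$ — controlled by the spectral-gap inequality $|\rho|^2\le|\rho|^2+|\nu|^2$, the elementary inequality $\langle\lambda+\mu\rangle\le\langle\lambda\rangle+(|\rho|^2+|\mu|^2)^{1/2}$, and the generalised Peetre inequality of Proposition \ref{prop:negdef}(\ref{peetre}) — choosing, term by term, whichever of the $\mu$- and $\lambda$-estimates is efficient, and then summing over $\beta\in\{0,\dots,M\}$ to obtain \eqref{eq:Phi_beta}. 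This weight bookkeeping is the delicate part of the argument and is where the symmetric-space setting forces something genuinely new; the full details, being otherwise close to those of \citet{jacob94,hoh} once Steps 1 and 2 are in place, may be found in \citet{RSB_thesis}.
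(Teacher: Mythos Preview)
Your Step 1 is exactly what the paper does and is correct. But you then misjudge the remaining difficulty: Step 2 is unnecessary, and Step 3 is not ``the delicate part'' --- it is a single application of Peetre's inequality directly to the output of Step 1.

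Concretely, after Step 1 you already have
\[
|\hat F_{\lambda,\eta}(\mu)|\ \le\ c_M\sum_{\beta=0}^M\|\Phi_\beta\|_1\,\langle\mu\rangle^{-M}\langle\lambda\rangle^M(1+\psi(\eta)).
\]
Now apply Peetre (Proposition \ref{prop:negdef}(\ref{peetre})) with $\psi(\cdot)=|\cdot|^2$, $s=M/2$, at the points $\lambda$ and $\lambda+\mu$:
\[
\frac{\langle\lambda\rangle^M}{\langle\lambda+\mu\rangle^M}=\left(\frac{1+|\lambda|^2}{1+|\lambda+\mu|^2}\right)^{M/2}\le 2^{M/2}(1+|\mu|^2)^{M/2}=2^{M/2}\langle\mu\rangle^M,
\]
i.e.\ $\langle\mu\rangle^{-M}\langle\lambda\rangle^M\le 2^{M/2}\langle\lambda+\mu\rangle^{-M}$. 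Substituting this finishes the proof with $C_M=2^{M/2}c_M$. There is no need for the symmetry observation of Step 2, no term-by-term splicing, and nothing ``genuinely new'' beyond Euclidean bookkeeping: the $\langle\lambda\rangle^M$ factor that Step 1 produces is exactly what Peetre converts into $\langle\lambda+\mu\rangle^{-M}$ once paired with $\langle\mu\rangle^{-M}$. Your deferral of this step to the thesis is therefore unwarranted --- it is the shortest step of the proof, and the paper carries it out in two lines.
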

\begin{rmks}\label{rmks:Φ_βlem}
\begin{enumerate}
\item As in (\ref{eq:<.>}), $\langle\lambda\rangle:=\sqrt{1+|\lambda|^2}$.
\item The condition (\ref{eq:Phi_betacond}) may seem quite obscure. The role of $\langle\lambda+\mu\rangle$ will hopefully become apparent in the proof of Theorem \ref{thm:H^psi,2}. For examples where it is satisfied, see \S\ref{sec:EGs}.
\item\label{calc} Under the conditions of the lemma, and using the Fubini theorem, we have the following: for all $u\in C_c^\infty(K|G|K)$ and $\lambda\in\a*$,
	\begin{equation}\label{eq:calc}
	\begin{aligned}
	(q(\sigma,D)u)^{\wedge}(\lambda) &= \int_G\int_{\a*}\phi_{-\lambda}(\sigma)\phi_\eta(\sigma)q(\sigma,\eta)\hat{u}(\eta)\omega(d\eta)d\sigma \\
	&= \int_{\a*}\left(\int_G\phi_\eta(\sigma)F_{\lambda,\eta}(\sigma)d\sigma\right)\hat{u}(\eta)\omega(d\eta) \\
	&= \int_{\a*}\hat{F}_{\lambda,\eta}(-\eta)\hat{u}(\eta)\omega(d\eta).
	\end{aligned}
	\end{equation}

Fubini's theorem does indeed apply here --- a suitable bound for the integrand on the first line of (\ref{eq:calc}) may be found by noting that, by (\ref{eq:Phi_betacond}),
	\begin{equation}\label{eq:anotherFubini}
	|\phi_{-\lambda}(\sigma)\phi_\eta(\sigma)q(\sigma,\eta)\hat{u}(\eta)|\leq |q(\sigma,\eta)||\hat{u}(\eta)|\leq \Phi_0(\sigma)\big(1+\psi(\eta)\big)|\hat{u}(\eta)|,
	\end{equation}
for all $\lambda,\eta\in\a*$ and $\sigma\in G$. By Theorem \ref{thm:ComDiag}, $\hat{u}\in\mathcal{S}(\a*)$, and the usual bound (\ref{eq:hcc-bnd}) on the density of Plancherel measure may be applied, similarly to (\ref{eq:Est}), to conclude that the right-hand side of (\ref{eq:anotherFubini}) is $\omega(d\eta)\times d\sigma$-integrable. 
\end{enumerate}
\end{rmks}

\begin{proof}[Proof of Lemma \ref{lem:Φ_β}]
Let $\beta\in\{0,1,\ldots,M\}$ and $\lambda,\eta\in\a*$ be fixed. The  fractional Laplacian $(-\Delta)^{\beta/2}$ satisfies a well-known eigenrelation
	\begin{equation}\label{eq:phi_mu}
	(-\Delta)^{\beta/2}\phi_\mu = \left(|\rho|^2+|\mu|^2\right)^{\beta/2}\phi_\mu, \hspace{20pt} \forall\mu\in\a*,
	\end{equation}
which may be proven using subordination methods and properties of the Laplace-Beltrami operator on a symmetric space, using similar techniques to Section 5.7 of \citet{AppLieProb}, pp.~154--7. One can also show using standard methods that
	\begin{equation}\label{eq:psdo}
	\left((-\Delta)^{\beta/2}f\right)^{\wedge}(\mu) = \left(|\rho|^2+|\mu|^2\right)^{\beta/2}\hat{f}(\mu),
	\end{equation}
for all $f\in C^M_c(K|G|K)$ and $\mu\in\a*$. Then, using the definition of  the spherical transform,
	\[(|\rho|^2+|\mu|^2)^{\beta/2}\hat{f}(\mu) = \int_G\phi_{-\mu}(\sigma)(-\Delta)^{\beta/2}f(\sigma)d\sigma, \]
for all $f\in C_c^M(K|G|K)$ and all $\mu\in\a*$. Applying this to $f=F_{\lambda,\eta}$, we have for all $\mu\in\al^\ast$,	
	\begin{align*}
	\left|\left(|\rho|^2+|\mu|^2\right)^{\beta/2}\hat{F}_{\lambda,\eta}(\mu)\right| &\leq \int_G|\phi_{-\mu}(\sigma)|\left|(-\Delta)^{\beta/2}F_{\lambda,\eta}(\sigma)\right|d\sigma \\
	&\leq \int_G\Phi_\beta(\sigma)\langle\lambda\rangle^M(1+\psi(\eta))d\sigma = \Vert\Phi_\beta\Vert_1\langle\lambda\rangle^M(1+\psi(\eta)),
	\end{align*}
and summing over $\beta$,
	\begin{equation}\label{eq:sumoverbeta}
	\sum_{\beta=0}^M\left(|\rho|^2+|\mu|^2\right)^{\beta/2}\left|\hat{F}_{\lambda,\eta}(\mu)\right| \leq \sum_{\beta=0}^M\Vert\Phi_\beta\Vert_1\langle\lambda\rangle^M(1+\psi(\eta)),
	\end{equation}
for all $\lambda,\mu,\eta\in\a*$. Let $C'_M>0$ be the smallest positive number such that
	\[\langle\mu\rangle^M \leq C'_M\sum_{\beta=0}^M\left(|\rho|^2+|\mu|^2\right)^{\beta/2} \hspace{20pt} \forall\mu\in\a*.\]
Then, rearranging (\ref{eq:sumoverbeta}),
	\begin{equation}\label{eq:lem}
	\left|\hat{F}_{\lambda,\eta}(\mu)\right| \leq C'_M\sum_{\beta=0}^M\Vert\Phi_\beta\Vert_1\langle\mu\rangle^{-M}\langle\lambda\rangle^M(1+\psi(\eta)),
	\end{equation}
for all $\lambda,\mu,\eta\in\a*$.

Finally, observe that by Peetre's inequality (see Proposition \ref{prop:negdef} (\ref{peetre})),
	\[\langle\lambda\rangle^M\langle\lambda+\mu\rangle^{-M} = \left(\frac{1+|\lambda|^2}{1+|\lambda+\mu|^2}\right)^{M/2}  \leq 2^{M/2}(1+|\mu|^2)^{M/2} = 2^{M/2}\langle\mu\rangle^M\]
for all $\lambda,\mu\in\a*$. Therefore, for all $\lambda,\mu\in\a*$,
	\[\langle\mu\rangle^{-M}\langle\lambda\rangle^M\leq 2^{M/2}\langle\lambda+\mu\rangle^{-M}\]
and by (\ref{eq:lem}), 
	\[\left|\hat{F}_{\lambda,\eta}(\mu)\right| \leq 2^{M/2}C'_M\sum_{\beta=0}^M\Vert\Phi_\beta\Vert_1\langle\lambda+\mu\rangle^{-M}(1+\psi(\eta))\]
The result now follows by taking $C_M=2^{M/2}C'_M$.
\end{proof}

\begin{rmk}\label{rmk:C_M}
The constant 
	\begin{equation}\label{eq:C_M}
	C_M:=2^{M/2}\sup_{\lambda\in\a*}\frac{\langle\lambda\rangle^M}{\sum_{\beta=0}^M\big(|\rho|^2+|\lambda|^2\big)^{\beta/2}}
	\end{equation}
appearing in the proof of Lemma \ref{lem:Φ_β} will remain relevant throughout this chapter.
\end{rmk}
Let now $q:G\times\a*\to\mathbb{R}$ be a continuous negative definite symbol, $K$-bi-invariant in its first argument, and $W$-invariant in its second (for example, $q$ could be taken to be a Gangolli symbol, as in (\ref{eq:GangSymb})). Similarly to \citet{jacob94} \S 4 and \citet{hoh} (4.26), we write
	\begin{equation}\label{eq:q}
	q(\sigma,\lambda) = q_1(\lambda)+q_2(\sigma,\lambda), \hspace{20pt} \forall\sigma\in G,\lambda\in\a*,
	\end{equation}
where $q_1(\lambda)=q(\sigma_0,\lambda)$ and $q_2(\sigma,\lambda)=q(\sigma,\lambda)-q(\sigma_0,\lambda)$, for some fixed $\sigma_0\in G$. Observe that $q_1$ is necessarily a negative definite symbol. Though $q_2$ may not be, we may still define the operator $q_2(\sigma,D)$ in a meaningful way, by  
	\[q_2(\sigma,D) := q(\sigma,D) - q_1(D) = \int_{\a*}\phi_\lambda(\sigma)q_2(\sigma,\lambda)\hat{f}(\lambda)\omega(d\lambda), \hspace{20pt} \forall \sigma\in G.\]
By decomposing $q$ in this way, we view it as a perturbation of a negative definite function $q_1$ by $q_2$. The assumptions we place on $q$ will control the size of this perturbation, as well as ensuring certain regularity properties of $q(\sigma,D)$ acting on the anisotropic Sobolev spaces introduced in Section \ref{sec:AniSob}.
\begin{asss}\label{asss:1,2}
In the notation above, we impose the following:
\begin{enumerate}
\item\label{A.1} There exist constants $c_0,c_1>0$ such that for all $\lambda\in\a*$ with $|\lambda|\geq 1$,
	\begin{equation}\label{eq:A.1}
	c_0(1+\psi(\lambda))\leq q_1(\lambda) \leq c_1(1+\psi(\lambda)).
	\end{equation}
\item\label{A.2.M} Let $M\in\mathbb{N}$, $M>\dim(G/K)$, and suppose that $q_2(\cdot,\lambda)\in C^M_c(K|G|K)$ for all $\lambda\in\a*$. Suppose further that for $\beta=0,1,\ldots,M$, there exists $\Phi_\beta\in L^1(K|G|K)$ such that 
	\begin{equation}\label{eq:A.2.M}
	\left|(-\Delta)^{\beta/2}F_{\lambda,\eta}(\sigma)\right| \leq \Phi_\beta(\sigma)\langle\lambda\rangle^M\big(1+\psi(\eta)\big),
	\end{equation}
for all $\lambda,\eta\in\a*$, $\sigma\in G$, where $F_{\lambda,\eta}(\sigma)=\phi_{-\lambda}(\sigma)q_2(\sigma,\eta)$ (c.f.~(\ref{eq:F})).
\end{enumerate}
\end{asss}

\begin{rmks}
\begin{enumerate}
\item These assumptions are analogues to P.1, P.2.q of \citet{jacob94}, pp.~156, or (A.1), (A.2.M) of \citet{hoh}, pp.54.
\item As noted in Remark \ref{rmks:Φ_βlem} (\ref{calc}), the conditions in Assumption \ref{asss:1,2} (\ref{A.2.M}) imply that
	\begin{equation}\label{eq:q_2^wedge}
	(q_2(\sigma,D)u)^{\wedge}(\lambda) = \int_{\a*}\hat{F}_{\lambda,\eta}(-\eta)\hat{u}(\eta)\omega(d\eta),
	\end{equation}
for all $\lambda\in\a*$ and $u\in C_c^\infty(K|G|K)$, a fact that will be useful several times more.
\end{enumerate}
\end{rmks}
\begin{thm}\label{thm:H^psi,2}
Subject to Assumptions \ref{asss:1,2}, for all $s\in\mathbb{R}$, $q_1(D)$ extends to a continuous operator from $H^{\psi,s+2}$ to $H^{\psi,s}$, and $q(\sigma,D)$ extends to a continuous operator from $H^{\psi,2}$ to $L^2(K|G|K)$.
\end{thm}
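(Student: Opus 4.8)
The plan is to follow \citet{jacob94} and \citet{hoh}~Section~4: decompose $q=q_1+q_2$ as in (\ref{eq:q}) and treat the constant-coefficient part $q_1(D)$ as a spherical Fourier multiplier and the variable-coefficient part $q_2(\sigma,D)$ via its integral representation on the transform side. The treatment of $q_1(D)$ is immediate and disposes of all $s$ at once: since $q_1$ is independent of $\sigma$ we have $(q_1(D)u)^{\wedge}(\lambda)=q_1(\lambda)\hat u(\lambda)$ for $u\in C_c^\infty(K|G|K)$, so that $\|q_1(D)u\|_{\psi,s}^2=\int_{\a*}(1+\psi(\lambda))^{s}q_1(\lambda)^2|\hat u(\lambda)|^2\,\omega(d\lambda)$ straight from the definition of the anisotropic norm, and it suffices to observe that $q_1(\lambda)^2\le C(1+\psi(\lambda))^2$ for all $\lambda\in\a*$ --- for $|\lambda|\ge1$ this is the upper bound in Assumption \ref{asss:1,2} (\ref{A.1}), and for $|\lambda|<1$ it holds because $q_1$ is continuous while $1+\psi(\lambda)\ge1$. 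This yields $\|q_1(D)u\|_{\psi,s}\le C\|u\|_{\psi,s+2}$ for every $s$, and since $C_c^\infty(K|G|K)$ is dense in $H^{\psi,s+2}$ by Theorem \ref{thm:aniso} (\ref{S->H_>S'}), $q_1(D)$ extends to a continuous operator $H^{\psi,s+2}\to H^{\psi,s}$.

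For $q_2(\sigma,D)$, the representation (\ref{eq:q_2^wedge}) gives $(q_2(\sigma,D)u)^{\wedge}(\lambda)=\int_{\a*}\hat F_{\lambda,\eta}(-\eta)\,\hat u(\eta)\,\omega(d\eta)$ for $u\in C_c^\infty(K|G|K)$, and Assumption \ref{asss:1,2} (\ref{A.2.M}) is exactly the hypothesis of Lemma \ref{lem:Φ_β} applied to $q_2$, so that
\[\bigl|(q_2(\sigma,D)u)^{\wedge}(\lambda)\bigr|\le C_M\sum_{\beta=0}^{M}\|\Phi_\beta\|_1\int_{\a*}\langle\lambda-\eta\rangle^{-M}\bigl(1+\psi(\eta)\bigr)|\hat u(\eta)|\,\omega(d\eta).\]
By the Plancherel identity (\ref{eq:Plancherel}) the target bound $\|q_2(\sigma,D)u\|_{L^2(K|G|K)}\le C\|u\|_{\psi,2}$ is then an $L^2(\a*,\omega)$-estimate for the convolution-type expression on the right, the function $(1+\psi)|\hat u|$ having $L^2(\a*,\omega)$-norm $\|u\|_{\psi,2}$. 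In the Euclidean case this step is exactly Young's inequality combined with $\langle\cdot\rangle^{-M}\in L^1(\mathbb{R}^d)$; here the corresponding integrability is Lemma \ref{lem:<.>} (which is why the threshold $M>\dim(G/K)$ appears), but because the Plancherel measure $\omega(d\lambda)=|\hcc(\lambda)|^{-2}d\lambda$ is not translation invariant there is no literal Young's inequality, and the decay $\langle\lambda-\eta\rangle^{-M}$ must be played off by hand --- by a Schur-type estimate --- against the polynomial growth $|\hcc(\lambda)|^{-2}\lesssim\langle\lambda\rangle^{2p}$ of the Plancherel density (here $2p=\dim N$ and $d=\dim\a*+2p$). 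Granting this estimate, $q_2(\sigma,D)$ extends to a continuous operator $H^{\psi,2}\to L^2(K|G|K)$; since $q=q_1+q_2$, combining with the $s=0$ case above gives the same for $q(\sigma,D)$, extended from the dense subspace $C_c^\infty(K|G|K)$.

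The one non-routine point is thus the $L^2(\a*,\omega)$ convolution-type estimate just indicated --- controlling an integral operator whose kernel decays like $\langle\lambda-\eta\rangle^{-M}$ against a measure of polynomial growth, without translation invariance --- and this is where the symmetric-space argument genuinely departs from \citet{hoh}. (For the $H^{\psi,s+2}\to H^{\psi,s}$ statement with $s\neq0$, which is used later but not needed here, one additionally redistributes the weights $(1+\psi)$ by means of the generalised Peetre inequality, Proposition \ref{prop:negdef} (\ref{peetre}), and the quadratic bound Proposition \ref{prop:negdef} (\ref{c_phi}).) Full details of the $\omega$-adapted estimate are given in \citet{RSB_thesis}.
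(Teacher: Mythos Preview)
Your overall strategy matches the paper's: treat $q_1(D)$ as a spherical multiplier via Assumption~\ref{asss:1,2}~(\ref{A.1}), and handle $q_2(\sigma,D)$ through the transform-side representation (\ref{eq:q_2^wedge}) together with the decay estimate of Lemma~\ref{lem:Φ_β}. The one substantive divergence is in the final convolution step. The paper does \emph{not} treat the non-translation-invariance of $\omega$ as an obstacle: it pairs $q_2(\sigma,D)u$ with a test function $v$, writes the resulting double integral as $\int_{\a*}\big[\langle\cdot\rangle^{-M}\ast(\Psi^2|\hat u|)\big](\lambda)\,|\hat v(\lambda)|\,\omega(d\lambda)$, and invokes Young's convolution inequality directly, citing \citet{simon1} and remarking that $\a*$ is being identified with a Euclidean space. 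Your Schur-type detour therefore either adds rigour to a step the paper leaves terse, or is an unnecessary complication --- but either way, deferring the actual estimate to \citet{RSB_thesis} leaves your proposal incomplete at precisely the point you flag as non-routine. A minor tactical difference: the paper argues by duality (estimate $|\langle q_2(\sigma,D)u,v\rangle|$ and take the supremum over $\Vert v\Vert=1$) rather than bounding $\Vert(q_2(\sigma,D)u)^\wedge\Vert_{L^2(\a*,\omega)}$ directly as you propose.
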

\begin{proof}
The proof of the first part is omitted, since it is an easy adaptation of the proof of Theorem 4.8 on page 55 of \citet{hoh} --- first proved as Corollary 3.1 in \citet{jacob94}. 

The second part is also proved similarly to Theorem 4.8 of \citet{hoh}, the main difference being that $\hat{F}_{\lambda,\eta}(-\eta)$ takes the place of the transformed symbol, as discussed previously (see (\ref{eq:hat{q}})). By (\ref{eq:q_2^wedge}) and the Plancherel theorem,
	\begin{align*}
	|\langle q_2(\sigma,D)u,v\rangle| &= \left|\int_{\a*}(q_2(\sigma,D)u)^{\wedge}(\lambda)\overline{\hat{v}(\lambda)}\omega(d\lambda)\right| \\
	&=  \left|\int_{\a*}\int_{\a*}\hat{F}_{\lambda,\eta}(-\eta)\hat{u}(\eta)\overline{\hat{v}(\lambda)}\omega(d\eta)\omega(d\lambda)\right| \\
	&\leq \int_{\a*}\int_{\a*}\left|\hat{F}_{\lambda,\eta}(-\eta)\right||\hat{u}(\eta)||\hat{v}(\lambda)|\omega(d\eta)\omega(d\lambda). 
	\end{align*}
Then, using (\ref{eq:A.2.M}), Lemma \ref{lem:Φ_β} and Young's convolution inequality\footnote{See \citet{simon1} Theorem 6.6.3, page 550. Here, we are again identifying $\a*$ with a Euclidean space.},
	\begin{align*}
	|\langle q_2(\sigma,D)u,v\rangle| &\leq C_M\sum_{\beta=0}^M\Vert\Phi_\beta\Vert_1\int_{\a*}\int_{\a*}\langle\lambda-\eta\rangle^{-M}\Psi(\eta)^2|\hat{u}(\eta)||\hat{v}(\lambda)|\omega(d\eta)\omega(d\lambda) \\
	&= C_M\sum_{\beta=0}^M\Vert\Phi_\beta\Vert_1\int_{\a*}\left[\langle\cdot\rangle^{-M}\ast\big(\Psi^2|\hat{u}|\big)\right](\lambda)|\hat{v}(\lambda)|\omega(d\lambda) \\
	&\leq C_M\sum_{\beta=0}^M\Vert\Phi_\beta\Vert_1\left\Vert\langle\cdot\rangle^{-M}\ast\big(\Psi^2|\hat{u}|\big)\right\Vert_{L^2(\a*,\omega)}\Vert\hat{v}\Vert_{L^2(\a*,\omega)} \\
	&\leq C_M\sum_{\beta=0}^M\Vert\Phi_\beta\Vert_1\left\Vert\langle\cdot\rangle^{-M}\right\Vert_{L^1(\a*,\omega)}\Vert u\Vert_{\psi,2}\Vert v\Vert,
	\end{align*}
for all $u,v\in C_c^\infty(K|G|K)$. Hence, for all $u\in C_c^\infty(K|G|K)$,
	\begin{align*}
	\Vert q_2(\sigma,D)u\Vert &= \sup_{\substack{v\in C_c^\infty(K|G|K) \\ \Vert v\Vert=1}}|\langle q_2(\sigma,D)u,v\rangle| \leq C_M\sum_{\beta=0}^M\Vert\Phi_\beta\Vert_1\left\Vert\langle\cdot\rangle^{-M}\right\Vert_{L^1(\a*,\omega)}\Vert u\Vert_{\psi,2},
	\end{align*}
and $q_2(\sigma,D)$ extends to a bounded linear operator $H^{\psi,2}\to L^2(K|G|K)$.
\end{proof}

Under an additional assumption, we are able to obtain a more powerful result.
\begin{thm}\label{thm:H^psi,s}
Suppose Assumptions \ref{asss:1,2} hold, and suppose further that $s\in\mathbb{R}$ satisfies $|s-1|+1+\dim(G/K) < M$. Then $q(\sigma,D)$ extends to a continuous linear operator from $H^{\psi,s+2}\to H^{\psi,s}$.
\end{thm}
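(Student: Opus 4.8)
The plan is to reduce everything to the already-proved case $s=2$ (Theorem \ref{thm:H^psi,2}) by interpolation-type estimates, exactly as in \citet{hoh} Theorem 4.9 and \citet{jacob94} Theorem 4.1, while being careful that the only nonstandard ingredients—the fractional Laplacian eigenrelation (\ref{eq:phi_mu})--(\ref{eq:psdo}) and Lemma \ref{lem:Φ_β}—carry the weight that the multinomial-derivative estimates carry in the Euclidean sources. Write $q=q_1+q_2$ as in (\ref{eq:q}). The first part of Theorem \ref{thm:H^psi,2} already gives that $q_1(D):H^{\psi,s+2}\to H^{\psi,s}$ is continuous for \emph{every} $s\in\mathbb{R}$, so the entire task is the $q_2$ piece: show $q_2(\sigma,D)$ extends to a continuous operator $H^{\psi,s+2}\to H^{\psi,s}$ under the hypothesis $|s-1|+1+\dim(G/K)<M$.

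First I would set $d=\dim(G/K)$ and, for $u\in C_c^\infty(K|G|K)$, use the identity (\ref{eq:q_2^wedge}), namely $(q_2(\sigma,D)u)^{\wedge}(\lambda)=\int_{\a*}\hat{F}_{\lambda,\eta}(-\eta)\hat u(\eta)\,\omega(d\eta)$, together with the duality formula (\ref{eq:dual}) for $\Vert\cdot\Vert_{\psi,s}$: it suffices to bound $|\langle q_2(\sigma,D)u,v\rangle|$ for $v\in C_c^\infty(K|G|K)$ by a constant times $\Vert u\Vert_{\psi,s+2}\Vert v\Vert_{\psi,-s}$. Inserting $\Psi(\lambda)^{-s}\Psi(\lambda)^{s}$ and $\Psi(\eta)^{-(s+2)}\Psi(\eta)^{s+2}$, one writes
\begin{align*}
|\langle q_2(\sigma,D)u,v\rangle|
&\leq \int_{\a*}\int_{\a*}\bigl|\hat{F}_{\lambda,\eta}(-\eta)\bigr|\,|\hat u(\eta)|\,|\hat v(\lambda)|\,\omega(d\eta)\,\omega(d\lambda)\\
&= \int_{\a*}\int_{\a*} \Psi(\lambda)^{-s}\Psi(\eta)^{-(s+2)}\bigl|\hat{F}_{\lambda,\eta}(-\eta)\bigr|\Psi(\eta)^{2}\,\Bigl(\Psi(\eta)^{s}|\hat u(\eta)|\Bigr)\Bigl(\Psi(\lambda)^{s}|\hat v(\lambda)|\Bigr)\omega(d\eta)\,\omega(d\lambda).
\end{align*}
Apply Lemma \ref{lem:Φ_β} to get $|\hat{F}_{\lambda,\eta}(-\eta)|\leq C_M\bigl(\sum_{\beta}\Vert\Phi_\beta\Vert_1\bigr)\langle\lambda-\eta\rangle^{-M}(1+\psi(\eta))$, so $|\hat F_{\lambda,\eta}(-\eta)|\Psi(\eta)^{2}$ absorbs the factor $\Psi(\eta)^2$ and leaves $C_M(\sum\Vert\Phi_\beta\Vert_1)\langle\lambda-\eta\rangle^{-M}\Psi(\eta)^{4}$—wait, one must be a little careful: $\Psi(\eta)^2=1+\psi(\eta)$, so $|\hat F_{\lambda,\eta}(-\eta)|\leq C_M(\sum\Vert\Phi_\beta\Vert_1)\langle\lambda-\eta\rangle^{-M}\Psi(\eta)^2$ and thus the whole symbol-factor is $C_M(\sum\Vert\Phi_\beta\Vert_1)\langle\lambda-\eta\rangle^{-M}\Psi(\lambda)^{-s}\Psi(\eta)^{-s}$. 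The crux is then the pointwise bound
\[
\Psi(\lambda)^{-s}\Psi(\eta)^{-s}\langle\lambda-\eta\rangle^{-M}\;\leq\; c\,\langle\lambda-\eta\rangle^{-M+|s|}\langle\lambda-\eta\rangle^{0}\quad\text{for relevant }s,
\]
obtained from the generalised Peetre inequality, Proposition \ref{prop:negdef} (\ref{peetre}): one has $\Psi(\lambda)^{-s}\leq 2^{|s|/2}(1+\psi(\lambda-\eta))^{|s|/2}\Psi(\eta)^{-s}$, hence $\Psi(\lambda)^{-s}\Psi(\eta)^{-s}\cdot\Psi(\eta)^{2s}\leq 2^{|s|/2}(1+\psi(\lambda-\eta))^{|s|/2}$ after rearranging; combined with the polynomial bound (\ref{eq:c_phi}), $(1+\psi(\lambda-\eta))^{|s|/2}\leq c\langle\lambda-\eta\rangle^{|s|}$. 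Chasing the exponents carefully (the $\Psi(\eta)^2$ is what forces the ``$+1$'' shift, because after the $q_1$-type reduction $s+2$ against $s$ behaves like $|(s+2)-s|$... more precisely one lands on needing $\langle\lambda-\eta\rangle^{-M+|s-1|+1}$ to be integrable against $\omega$), one reduces to: the kernel $\langle\lambda-\eta\rangle^{-M+|s-1|+1}$ lies in $L^1(\a*,\omega)$. By Lemma \ref{lem:<.>} this holds precisely when $M-|s-1|-1>d$, i.e. under the hypothesis $|s-1|+1+d<M$. A Schur test / Young's convolution inequality (identifying $\a*$ with Euclidean space, as in the footnote to Theorem \ref{thm:H^psi,2}) then gives
\[
|\langle q_2(\sigma,D)u,v\rangle|\;\leq\; C_M\Bigl(\textstyle\sum_{\beta=0}^M\Vert\Phi_\beta\Vert_1\Bigr)\,\bigl\Vert\langle\cdot\rangle^{-M+|s-1|+1}\bigr\Vert_{L^1(\a*,\omega)}\,\Vert u\Vert_{\psi,s+2}\,\Vert v\Vert_{\psi,-s},
\]
and taking the supremum over $v\in C_c^\infty(K|G|K)$ with $\Vert v\Vert_{\psi,-s}=1$, using (\ref{eq:dual}) and the density statement in Theorem \ref{thm:aniso} (\ref{S->H_>S'}), yields $\Vert q_2(\sigma,D)u\Vert_{\psi,s}\leq C\Vert u\Vert_{\psi,s+2}$; so $q_2(\sigma,D)$, and hence $q(\sigma,D)=q_1(D)+q_2(\sigma,D)$, extends continuously from $H^{\psi,s+2}$ to $H^{\psi,s}$.

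I expect the main obstacle to be the bookkeeping of exponents in the Peetre/polynomial-growth step — making sure the $\Psi(\eta)^2$ coming from the negative-definiteness bound on the symbol, the weights $\Psi(\lambda)^{-s},\Psi(\eta)^{s+2}$ from the two Sobolev norms, and the gain $\langle\lambda-\eta\rangle^{-M}$ from Lemma \ref{lem:Φ_β} combine to leave exactly $\langle\lambda-\eta\rangle^{-M+|s-1|+1}$, so that the integrability threshold $M>|s-1|+1+d$ from Lemma \ref{lem:<.>} is matched sharply. Everything else (the Fubini justification on the first line, realising the double integral as a convolution, invoking Young's inequality) is routine and follows \citet{hoh} Theorem 4.9 almost verbatim, so I would state it as such and only highlight that the role of the Bessel-potential weights $\langle\cdot\rangle$ relative to Lebesgue measure is now played relative to Plancherel measure $\omega$, which is harmless by the $\hcc$-function bound (\ref{eq:hcc-bnd}) exactly as in Lemma \ref{lem:<.>}.
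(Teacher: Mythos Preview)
Your duality-plus-Peetre approach is valid but genuinely different from the paper's. The paper does \emph{not} pair against $v\in H^{\psi,-s}$; instead it writes $\Vert q_2(\sigma,D)u\Vert_{\psi,s}=\Vert\Psi(D)^s q_2(\sigma,D)u\Vert$, splits this as $\Vert q_2(\sigma,D)\Psi(D)^s u\Vert+\Vert[\Psi(D)^s,q_2(\sigma,D)]u\Vert$, handles the first term by Theorem~\ref{thm:H^psi,2}, and estimates the commutator via the difference bound $|\Psi(\lambda)^s-\Psi(\eta)^s|\leq C_{s,\psi}\langle\lambda-\eta\rangle^{|s-1|+1}\Psi(\eta)^{s-1}$ (Lemma~\ref{lem:H^psi,s}). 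It is this \emph{difference} estimate, not Peetre applied to a ratio, that produces the exponent $|s-1|+1$.

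Your route, done cleanly, actually gives a \emph{smaller} exponent: after the bound $|\hat F_{\lambda,\eta}(-\eta)|\leq C\langle\lambda-\eta\rangle^{-M}\Psi(\eta)^2$ the remaining weight is $\Psi(\lambda)^s\Psi(\eta)^{-s}$, and Peetre plus (\ref{eq:c_phi}) bounds this by $C'\langle\lambda-\eta\rangle^{|s|}$, so the kernel is $\langle\lambda-\eta\rangle^{-M+|s|}$, integrable once $M>|s|+d$. Since $|s|\leq|s-1|+1$ always, the theorem's hypothesis is more than enough --- so your worry about ``landing exactly on $|s-1|+1$'' is misplaced; you land on something no larger. (Your displayed insertion of weights has the signs on $\Psi(\lambda)^{\pm s}$ against $\hat v$ reversed, which is the source of your mid-proof confusion; fix that and the arithmetic is routine.)

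What your approach does \emph{not} give is the two-term estimate (\ref{eq:q2est}),
\[
\Vert q_2(\sigma,D)u\Vert_{\psi,s}\leq C_M\sum_\beta\Vert\Phi_\beta\Vert_1\Big(\Vert\langle\cdot\rangle^{-M}\Vert_{L^1}\Vert u\Vert_{\psi,s+2}+C_{s,\psi}\Vert u\Vert_{\psi,s+1}\Big),
\]
with the constant in front of $\Vert u\Vert_{\psi,s+2}$ controlled \emph{independently of $s$}. This refined form is exactly what Theorem~\ref{thm:l.b.} needs: there one must absorb the $\Vert u\Vert_{\psi,s+2}$ term using Assumption~\ref{ass:3}, and the $\Vert u\Vert_{\psi,s+1}$ term separately via Theorem~\ref{thm:aniso}(\ref{s3s2s1}). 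A single bound $\Vert q_2(\sigma,D)u\Vert_{\psi,s}\leq C(s)\Vert u\Vert_{\psi,s+2}$ with an $s$-dependent constant would not suffice there. So for the statement of Theorem~\ref{thm:H^psi,s} as written your argument is fine, but the paper's commutator method is doing double duty for the later lower-bound estimate.
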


We first need a technical lemma.
\begin{lem}\label{lem:H^psi,s}
Let $s\in\mathbb{R}$ and $M\in\mathbb{N}$ be such that $|s-1|+1+\dim(G/K)<M$. Then for all $\lambda,\eta\in\a*$,
	\begin{equation}\label{eq:claim}
	\left|\Psi(\lambda)^s-\Psi(\eta)^s\right| \leq C_{s,\psi}\langle\lambda-\eta\rangle^{|s-1|+1}\Psi(\eta)^{s-1},
	\end{equation}
where
	\begin{equation}\label{eq:C_s,phi}
	C_{s,\psi} = 2^{(|s-1|+2)/2}(1+c_\psi)^{(|s-1|+1)/2}|s|,
	\end{equation}
and  $c_\psi$ is the constant from Proposition \ref{prop:negdef} (\ref{c_phi}).
\end{lem}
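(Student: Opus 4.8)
The plan is to control the left-hand side of $(\ref{eq:claim})$ by applying the mean value theorem to the scalar function $x\mapsto x^{s}$ on the closed interval with endpoints $\Psi(\lambda)$ and $\Psi(\eta)$; by $(\ref{eq:Psi})$ both endpoints are at least $1$, so this is legitimate and gives
\[
\left|\Psi(\lambda)^{s}-\Psi(\eta)^{s}\right|=|s|\,\xi^{\,s-1}\left|\Psi(\lambda)-\Psi(\eta)\right|
\]
for some $\xi$ lying between $\Psi(\lambda)$ and $\Psi(\eta)$. Since $x\mapsto x^{s-1}$ is monotone, $\xi^{\,s-1}\le\max\{\Psi(\lambda)^{s-1},\Psi(\eta)^{s-1}\}$ whether $s-1\ge0$ or $s-1<0$. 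It therefore suffices to estimate two quantities: (i) $\max\{\Psi(\lambda)^{s-1},\Psi(\eta)^{s-1}\}$ by a constant multiple of $\langle\lambda-\eta\rangle^{|s-1|}\Psi(\eta)^{s-1}$, and (ii) $\left|\Psi(\lambda)-\Psi(\eta)\right|$ by a constant multiple of $\langle\lambda-\eta\rangle$. Multiplying these two bounds and collecting constants will then yield $(\ref{eq:claim})$.

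For (i) I would invoke the generalised Peetre inequality, Proposition $\ref{prop:negdef}$ $(\ref{peetre})$, with the real exponent $(s-1)/2$, using $|\psi|=\psi$ since $\psi$ is non-negative: this gives
\[
\frac{\Psi(\lambda)^{\,s-1}}{\Psi(\eta)^{\,s-1}}=\left(\frac{1+\psi(\lambda)}{1+\psi(\eta)}\right)^{(s-1)/2}\le 2^{|s-1|/2}\left(1+\psi(\lambda-\eta)\right)^{|s-1|/2}=2^{|s-1|/2}\,\Psi(\lambda-\eta)^{|s-1|},
\]
and likewise with $\lambda$ and $\eta$ interchanged, so $\max\{\Psi(\lambda)^{s-1},\Psi(\eta)^{s-1}\}\le 2^{|s-1|/2}\Psi(\lambda-\eta)^{|s-1|}\Psi(\eta)^{s-1}$, the $\eta$-term being absorbed since $2^{|s-1|/2}\Psi(\lambda-\eta)^{|s-1|}\ge1$. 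Combining this with $\Psi(\lambda-\eta)\le(1+c_\psi)^{1/2}\langle\lambda-\eta\rangle$, which follows from Proposition $\ref{prop:negdef}$ $(\ref{c_phi})$ (equation $(\ref{eq:c_phi})$) together with the definitions $(\ref{eq:Psi})$ and $(\ref{eq:<.>})$, settles (i).

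For (ii), the observation that I expect to be the one genuinely non-routine point is that $1+\psi$ is itself a continuous negative definite function on $\a*$ (a non-negative constant is negative definite, and the class of negative definite functions is closed under addition), so Proposition $\ref{prop:negdef}$ $(\ref{sqrt})$ may be applied to $1+\psi$ in place of $\psi$; since $1+\psi>0$ this reads
\[
\left|\Psi(\lambda)-\Psi(\eta)\right|=\left|\sqrt{1+\psi(\lambda)}-\sqrt{1+\psi(\eta)}\right|\le\sqrt{1+\psi(\lambda-\eta)}=\Psi(\lambda-\eta)\le(1+c_\psi)^{1/2}\langle\lambda-\eta\rangle.
\]
Feeding (i) and (ii) into the mean value identity yields
\[
\left|\Psi(\lambda)^{s}-\Psi(\eta)^{s}\right|\le|s|\,2^{|s-1|/2}(1+c_\psi)^{(|s-1|+1)/2}\,\langle\lambda-\eta\rangle^{|s-1|+1}\,\Psi(\eta)^{s-1},
\]
which is $(\ref{eq:claim})$ with a constant of the shape $(\ref{eq:C_s,phi})$ (the stated $C_{s,\psi}$ is, if anything, slightly generous in the power of $2$). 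Apart from the negative-definiteness remark used for (ii) and the care needed with the sign of $s-1$, the argument is routine; I would also point out that the hypothesis $|s-1|+1+\dim(G/K)<M$ is not used in the proof of the inequality itself and is recorded only because the lemma is applied under that restriction in the proof of Theorem $\ref{thm:H^psi,s}$.
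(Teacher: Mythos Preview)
Your argument is correct and complete. The paper does not give its own proof but simply cites \citet{hoh}, page 50; your mean-value-theorem route together with the generalised Peetre inequality and the observation that $1+\psi$ is itself negative definite is precisely the standard derivation one finds there, and you have recovered the inequality with a constant that is in fact sharper by a factor of $2$ than the stated $C_{s,\psi}$. Your closing remark that the hypothesis on $M$ plays no role in the inequality itself is also accurate.
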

\begin{proof}
This is a special case of a bound obtained in \citet{hoh} --- see page 50, lines 5--11.
\end{proof}

\begin{proof}[Proof of Theorem \ref{thm:H^psi,s}]
By Theorem \ref{thm:H^psi,2}, it suffices to prove that $q_2(\sigma,D)$ extends to a continuous operator from $H^{\psi,s+2}\to H^{\psi,s}$. Given $u\in C^\infty_c(K|G|K)$,
	\begin{equation}\label{eq:calc1}
	\begin{aligned}
	\Vert q_2(\sigma,D)u\Vert_{\psi,s} &= \Vert\Psi(D)^sq_2(\sigma,D)u\Vert \\
	&\leq \Vert q_2(\sigma,D)\Psi(D)^su\Vert + \Vert[\Psi(D)^s,q_2(\sigma,D)]u\Vert.
	\end{aligned}
	\end{equation}
Also, by Theorem \ref{thm:H^psi,2} and Theorem \ref{thm:aniso} (\ref{ctsembed}),
	\begin{equation}\label{eq:calc2}
	\Vert q_2(\sigma,D)\Psi(D)^su\Vert \leq C\Vert\Psi(D)^su\Vert_{\psi,2} = C\Vert u \Vert_{\psi,s+2},
	\end{equation}
where $C = C_M\sum_{\beta=0}^M\Vert\Phi_\beta\Vert_1\left\Vert\langle\cdot\rangle^{-M}\right\Vert_{L^1(\a*,\omega)}$. We will estimate
	\[\left\Vert[\Psi(D)^s,q_2(\sigma,D)]u\right\Vert,\]
Our method is similar to that in Theorem 4.3 of \citet{hoh}, and so some details are omitted. The map $F_{\lambda,\eta}$ replaces the transformed symbol $\hat{q}$ once again. 

One can check using (\ref{eq:q_2^wedge}) that for all $\lambda\in\a*$,
	\[([\Psi(D)^s,q_2(\sigma,D)]u)^{\wedge}(\lambda) = \int_{\a*}\hat{F}_{\lambda,\eta}(-\eta)\big\{\Psi(\lambda)^s-\Psi(\eta)^s\big\}\hat{u}(\eta)\omega(d\eta),\]
and hence for all $u,v\in C_c^\infty(K|G|K)$,
	\[\left|\big\langle[\Psi(D)^s,q_2(\sigma,D)]u,v\big\rangle\right| \leq \int_{\a*}\int_{\a*}\left|\hat{F}_{\lambda,\eta}(-\eta)\right|\left|\Psi(\lambda)^s-\Psi(\eta)^s\right||\hat{u}(\eta)||\hat{v}(\lambda)|\omega(d\eta)\omega(d\lambda).\]
By Lemmas \ref{lem:Φ_β} and \ref{lem:H^psi,s}, 
	\begin{align*}	
	&\left|\big\langle[\Psi(D)^s,q_2(\sigma,D)]u,v\big\rangle\right| \\
	&\hspace{50pt}\leq C_{s,\psi,M}\int_{\a*}\int_{\a*}\langle\lambda-\eta\rangle^{-M+|s-1|+1}\Psi(\eta)^{s+1}|\hat{u}(\eta)||\hat{v}(\lambda)|\omega(d\eta)\omega(d\lambda) \\
	&\hspace{50pt}= C_{s,\psi,M}\int_{\a*}\left(\langle\cdot\rangle^{-M+|s-1|+1}\ast\left[\Psi^{s+1}|\hat{u}|\right]\right)(\lambda)|\hat{v}(\lambda)|\omega(d\lambda),
	\end{align*}
where $C_{s,\psi,M} = C_{s,\psi}C_M\sum_{\beta=0}^M\Vert\Phi_\beta\Vert_1$. By Lemma \ref{lem:<.>}, $\langle\cdot\rangle^{-(M-|s-1|-2)}\in L^1(\a*,\omega)$, and one can check using the Cauchy--Schwarz and Young inequalities that
	\[\left|\big\langle[\Psi(D)^s,q_2(\sigma,D)]u,v\big\rangle\right| \leq C_{s,\psi,M}\left\Vert\langle\cdot\rangle^{-(M-|s-1|-1)}\right\Vert_{L^1(\a*,\omega)}\Vert u\Vert_{\psi,s+1}\Vert v\Vert. \]
Taking the supremum over $v\in C_c^\infty(K|G|K)$, with $\Vert v\Vert=1$,
	\[\Vert[\Psi(D)^s,q_2(\sigma,D)]u\Vert \leq C_{s,\psi,M}\left\Vert\langle\cdot\rangle^{-(M-|s-1|-1)}\right\Vert_{L^1(\a*,\omega)}\Vert u\Vert_{\psi,s+1}.\]
Combining with (\ref{eq:calc1}) and (\ref{eq:calc2}),
	\begin{equation}\label{eq:q2est}
	\Vert q_2(\sigma,D)u\Vert_{\psi,s} \leq C_M\sum_{\beta=0}^M\Vert\Phi_\beta\Vert_{L^1(\a*,\omega)}\Big(\left\Vert\langle\cdot\rangle^{-M}\right\Vert_{L^1(\a*,\omega)}\Vert u\Vert_{\psi,s+2} + C_{s,\psi}\Vert u\Vert_{\psi,s+1}\Big).
	\end{equation}
Theorem \ref{thm:aniso} (\ref{ctsembed}) may now be used to obtain the desired bound.
\end{proof}

To prove (\ref{eq:hyr3}), we seek solutions $u$ to the equation
	\begin{equation}\label{eq:dense}
	(q(\sigma,D)+\alpha)u=f,
	\end{equation}
for a given function $f$ and $\alpha>0$. Consider the bilinear form $B_\alpha$ defined by 
	\[B_\alpha(u,v) = \langle(q(\sigma,D)+\alpha)u,v\rangle, \hspace{20pt} \forall u,v\in C_c^\infty(K|G|K).\]

\begin{thm}
Suppose Assumptions \ref{asss:1,2} hold with $M>\dim(G/K)+1$. Then $B_\alpha$ extends continuously to $H^{\psi,1}\times H^{\psi,1}$.
\end{thm}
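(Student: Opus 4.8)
## Proof Proposal

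The plan is to estimate $|B_\alpha(u,v)|$ for $u,v \in C_c^\infty(K|G|K)$ by a constant times $\|u\|_{\psi,1}\|v\|_{\psi,1}$, and then extend by density using Theorem \ref{thm:aniso} (\ref{S->H_>S'}). We split $B_\alpha(u,v) = \langle q_1(D)u,v\rangle + \langle q_2(\sigma,D)u,v\rangle + \alpha\langle u,v\rangle$ and treat the three pieces separately. The term $\alpha\langle u,v\rangle$ is harmless: by Cauchy--Schwarz and Theorem \ref{thm:aniso} (\ref{ctsembed}) it is bounded by $\alpha\|u\|\,\|v\| \leq \alpha\|u\|_{\psi,1}\|v\|_{\psi,1}$. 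For the $q_1$ term, I would use the Plancherel theorem to write $\langle q_1(D)u,v\rangle = \int_{\a*} q_1(\lambda)\hat u(\lambda)\overline{\hat v(\lambda)}\,\omega(d\lambda)$; Assumption \ref{asss:1,2} (\ref{A.1}) together with local boundedness of $q_1$ gives $|q_1(\lambda)| \leq c(1+\psi(\lambda)) = c\,\Psi(\lambda)^2$ for all $\lambda$ (on $|\lambda|<1$ one absorbs the bound into the constant using continuity and $\Psi \geq 1$), so by Cauchy--Schwarz $|\langle q_1(D)u,v\rangle| \leq c\int \Psi(\lambda)\,|\hat u(\lambda)|\,\Psi(\lambda)\,|\hat v(\lambda)|\,\omega(d\lambda) \leq c\,\|u\|_{\psi,1}\|v\|_{\psi,1}$.

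The substantive piece is the $q_2$ term, and here I would reuse the machinery already built. By \eqref{eq:q_2^wedge} and Plancherel,
\[
|\langle q_2(\sigma,D)u,v\rangle| \leq \int_{\a*}\int_{\a*}\left|\hat F_{\lambda,\eta}(-\eta)\right|\,|\hat u(\eta)|\,|\hat v(\lambda)|\,\omega(d\eta)\,\omega(d\lambda),
\]
and Lemma \ref{lem:Φ_β} (applicable since Assumption \ref{asss:1,2} (\ref{A.2.M}) holds with $M > \dim(G/K)+1$) bounds $|\hat F_{\lambda,\eta}(-\eta)|$ by $C_M\sum_\beta\|\Phi_\beta\|_1\,\langle\lambda-\eta\rangle^{-M}(1+\psi(\eta)) = C_M\sum_\beta\|\Phi_\beta\|_1\,\langle\lambda-\eta\rangle^{-M}\Psi(\eta)^2$. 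I want only one factor of $\Psi$ on each of $u$ and $v$, so I would split $\Psi(\eta)^2 = \Psi(\eta)\cdot\Psi(\eta)$ and move one factor onto the $v$-side by the generalised Peetre inequality (Proposition \ref{prop:negdef} (\ref{peetre})) with $s=1$: $\Psi(\eta) \leq \sqrt 2\,\langle\lambda-\eta\rangle\,\Psi(\lambda)$ (using $\psi(\lambda)\leq c_\psi(1+|\lambda|^2)$ to pass between $(1+\psi(\cdot))$ and $\langle\cdot\rangle$ as needed, absorbing constants). This gives
\[
|\langle q_2(\sigma,D)u,v\rangle| \leq C_M'\sum_{\beta=0}^M\|\Phi_\beta\|_1\int_{\a*}\int_{\a*}\langle\lambda-\eta\rangle^{-M+1}\big(\Psi(\eta)|\hat u(\eta)|\big)\big(\Psi(\lambda)|\hat v(\lambda)|\big)\,\omega(d\eta)\,\omega(d\lambda),
\]
which is a convolution paired against $\Psi|\hat v|$. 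Since $M-1 > \dim(G/K)$, Lemma \ref{lem:<.>} gives $\langle\cdot\rangle^{-(M-1)} \in L^1(\a*,\omega)$, so Young's convolution inequality (in the $L^1\ast L^2 \to L^2$ form, identifying $\a*$ with Euclidean space as elsewhere) plus Cauchy--Schwarz yields the bound $C\,\|\Psi|\hat u|\|_{L^2(\a*,\omega)}\,\|\Psi|\hat v|\|_{L^2(\a*,\omega)} = C\,\|u\|_{\psi,1}\|v\|_{\psi,1}$.

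Combining the three estimates gives $|B_\alpha(u,v)| \leq C_\alpha\|u\|_{\psi,1}\|v\|_{\psi,1}$ for all $u,v \in C_c^\infty(K|G|K)$, with $C_\alpha$ depending on $c_0,c_1,c_\psi$, the $\|\Phi_\beta\|_1$, $C_M$, $\|\langle\cdot\rangle^{-(M-1)}\|_{L^1(\a*,\omega)}$ and $\alpha$. Since $C_c^\infty(K|G|K)$ is dense in $H^{\psi,1}$ by Theorem \ref{thm:aniso} (\ref{S->H_>S'}), the form $B_\alpha$ extends uniquely to a continuous bilinear form on $H^{\psi,1}\times H^{\psi,1}$. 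The only delicate point — and where I would be most careful — is the bookkeeping of the Peetre-type interpolation: one must decide exactly how to distribute the $\Psi(\eta)^2$ weight so that both the resulting convolution kernel stays $\omega$-integrable (forcing the loss of at most one power, hence the need for $M > \dim(G/K)+1$ rather than merely $M > \dim(G/K)$) and each of $u,v$ ends up measured in $H^{\psi,1}$ exactly; the interplay between $(1+\psi(\cdot))$ and $\langle\cdot\rangle$ via \eqref{eq:c_phi} and \eqref{eq:psi_est} must be tracked so no stray powers of $\langle\cdot\rangle$ are left over.
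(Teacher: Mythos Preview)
Your proposal is correct and follows essentially the same route as the paper: the three-term split, the Plancherel/Cauchy--Schwarz estimate for $q_1$, and for $q_2$ the combination of \eqref{eq:q_2^wedge}, Lemma \ref{lem:Φ_β}, a Peetre-type transfer of one factor $\Psi(\eta)$ onto the $v$-side (costing one power of $\langle\lambda-\eta\rangle$), and Young's inequality with the kernel $\langle\cdot\rangle^{-(M-1)}\in L^1(\a*,\omega)$. The paper records the resulting constant for the $q_2$ piece as $\kappa_2=C_M\sqrt{2(1+c_\psi)}\sum_{\beta=0}^M\Vert\Phi_\beta\Vert_1$, which is exactly what your Peetre step $\Psi(\eta)\leq\sqrt{2(1+c_\psi)}\,\langle\lambda-\eta\rangle\,\Psi(\lambda)$ produces once the passage from $\Psi(\lambda-\eta)$ to $\langle\lambda-\eta\rangle$ via \eqref{eq:c_phi} is made explicit.
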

\begin{proof}
This proof is very similar to those of \citet{jacob94} Lemma 3.2, pp.~160, and \citet{hoh} Theorem 4.9, pp.~56, and so we give only a sketch. 

Let $u,v\in H^{\psi,1}$. Using Assumption \ref{asss:1,2} (\ref{A.1}) and the fact that $q_1$ is continuous, there is $\kappa_1>0$ such that $|q_1|\leq\kappa_1\Psi^2$. Plancherel's identity may then be used to show that
	\[\left|\langle q_1(D)u,v\rangle\right| \leq \int_{\a*}|q_1(\lambda)||\hat{u}(\lambda)||\hat{v}(\lambda)|\omega(d\lambda) \leq \kappa_1\Vert u\Vert_{\psi,1}\Vert v\Vert_{\psi,1}.\]
Furthermore, methods similar to the proof of Theorem \ref{thm:H^psi,s} are used to show that
	\begin{equation}\label{eq:q_2}
	\left|\langle q_2(\sigma,D)u,v\rangle\right| \leq \kappa_2\left\Vert\langle\cdot\rangle^{-M+1}\right\Vert_{L^1(\a*,\omega)}\Vert u\Vert_{\psi,1}\Vert v\Vert_{\psi,1},
	\end{equation}
where 
	\begin{equation}\label{eq:kappa2}
	\kappa_2=C_M\sqrt{2(1+c_\psi)}\sum_{\beta=0}^M\Vert\Phi_\beta\Vert_{L^1(\a*,\omega)}.
	\end{equation}
By Theorem \ref{thm:aniso} (\ref{ctsembed}), there is $\kappa_3>0$ such that $\Vert u\Vert\leq \kappa_3\Vert u\Vert_{\psi,1}$, and thus
	\[\left|B_\alpha(u,v)\right| \leq \left|\langle q_1(D)u,v\rangle\right| + \left|\langle q_2(\sigma, D)u,v\rangle\right| + \alpha\left|\langle u,v\rangle\right| \leq \left(\kappa_1 + \kappa_2 + \alpha\kappa_3^2\right)\Vert u\Vert_{\psi,1}\Vert v\Vert_{\psi,2},\]
for all $u,v\in H^{\psi,1}$, which proves the theorem.
\end{proof}

The following assumption will ensure that for $\alpha$ sufficiently large, $B_\alpha$ is coercive on $H^{\psi,1}$. We will then use the Lax--Milgram theorem to obtain a weak solution to (\ref{eq:dense}).
\begin{ass}\label{ass:3}
Let $M\in\mathbb{N}$, $M>\dim(G/K)+1$, and write
	\[\gamma_M = \left(8C_M(2(1+c_\psi))^{1/2}\Vert\langle\cdot\rangle^{-M+1}\Vert_{L^1(\a*,\omega)}\right)^{-1},\]
where $c_{\psi}$ and $C_M$ are constants given by (\ref{eq:c_phi}) and  (\ref{eq:C_M}), respectively.

For $c_0$ is as in Assumption \ref{asss:1,2} (\ref{A.1}), assume that 
	\[\sum_{\beta=0}^M\Vert\Phi_\beta\Vert_1 \leq \gamma_Mc_0.\]
\end{ass}
\begin{rmk}
See \citet{jacob94} P.3 and P.4, pp.~161, or \citet{hoh} (A.3.M), pp.~54, for comparison. Examples where Assumption \ref{ass:3} is satisfied are considered in Section \ref{sec:EGs}.
\end{rmk}

The next theorem is an analogue of Theorem 3.1 of \citet{jacob94}.
\begin{thm}\label{thm:coercive}
Suppose Assumptions \ref{asss:1,2} and \ref{ass:3} hold, with $M>\dim(G/K)+1$. Then there is $\alpha_0>0$ such that
	\[B_\alpha(u,u)\geq\frac{c_0}{2}\Vert u\Vert_{1,\lambda}^2,\]
for all $\alpha\geq\alpha_0$ and $u\in H^{\psi,1}$. In particular, $B_\alpha$ is coercive for all $\alpha\geq\alpha_0$.
\end{thm}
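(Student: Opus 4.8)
The plan is to expand $B_\alpha(u,u)$ according to the decomposition $q=q_1+q_2$ of (\ref{eq:q}), establish a G\aa rding-type lower bound for the $q_1$-part, treat $q_2$ as a perturbation that Assumption \ref{ass:3} renders small, and then choose $\alpha_0$ large enough that the resulting low-frequency error is absorbed by $\alpha\Vert u\Vert^2$. It suffices to prove the inequality for $u\in C_c^\infty(K|G|K)$: it then extends to all of $H^{\psi,1}$ by density (Theorem \ref{thm:aniso} (\ref{S->H_>S'})) together with the continuity of $B_\alpha$ on $H^{\psi,1}\times H^{\psi,1}$ from the preceding theorem. So write $B_\alpha(u,u)=\langle q_1(D)u,u\rangle+\langle q_2(\sigma,D)u,u\rangle+\alpha\Vert u\Vert^2$.

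For the first term, since $q$ is real-valued, $q_1=q(\sigma_0,\cdot)$ is a real-valued continuous negative definite function, hence non-negative; thus by the Plancherel identity (\ref{eq:Plancherel}) and Assumption \ref{asss:1,2} (\ref{A.1}),
\[
\langle q_1(D)u,u\rangle=\int_{\a*}q_1(\lambda)|\hat u(\lambda)|^2\omega(d\lambda)\geq c_0\int_{\{|\lambda|\geq 1\}}\big(1+\psi(\lambda)\big)|\hat u(\lambda)|^2\omega(d\lambda).
\]
On $\{|\lambda|<1\}$ one has $1+\psi(\lambda)\leq 1+2c_\psi$ by Proposition \ref{prop:negdef} (\ref{c_phi}), so the portion of $\Vert u\Vert_{\psi,1}^2$ missing from the integral above is at most $(1+2c_\psi)\Vert u\Vert^2$; hence $\langle q_1(D)u,u\rangle\geq c_0\Vert u\Vert_{\psi,1}^2-c_0(1+2c_\psi)\Vert u\Vert^2$.

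For the second term, I would put $v=u$ in estimate (\ref{eq:q_2}) of the preceding theorem, which gives $|\langle q_2(\sigma,D)u,u\rangle|\leq\kappa_2\Vert\langle\cdot\rangle^{-M+1}\Vert_{L^1(\a*,\omega)}\Vert u\Vert_{\psi,1}^2$ with $\kappa_2$ as in (\ref{eq:kappa2}); substituting the value of $\kappa_2$ and invoking Assumption \ref{ass:3} with the definition of $\gamma_M$ shows that this constant is at most $c_0/8$, so $|\langle q_2(\sigma,D)u,u\rangle|\leq\tfrac{c_0}{8}\Vert u\Vert_{\psi,1}^2$. Combining the two bounds,
\[
B_\alpha(u,u)\geq \tfrac{7c_0}{8}\Vert u\Vert_{\psi,1}^2+\big(\alpha-c_0(1+2c_\psi)\big)\Vert u\Vert^2,
\]
and taking $\alpha_0=c_0(1+2c_\psi)$ yields $B_\alpha(u,u)\geq\tfrac{7c_0}{8}\Vert u\Vert_{\psi,1}^2\geq\tfrac{c_0}{2}\Vert u\Vert_{\psi,1}^2$ for every $\alpha\geq\alpha_0$; coercivity on $H^{\psi,1}$ is then immediate after the density argument.

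I do not expect a genuine obstacle here: this is the Euclidean argument of \citet{jacob94} and \citet{hoh} transcribed to the present setting, and the only real work is organisational --- keeping the low-frequency region $\{|\lambda|<1\}$, where (\ref{eq:A.1}) gives no control, isolated so that its contribution can be swallowed by $\alpha\Vert u\Vert^2$, and checking that the constants in Assumption \ref{ass:3} have been calibrated so that $q_2$ absorbs strictly less than half of $c_0\Vert u\Vert_{\psi,1}^2$. The one input specific to noncompact symmetric spaces is that $\langle\cdot\rangle^{-M+1}\in L^1(\a*,\omega)$, which by Lemma \ref{lem:<.>} is precisely the hypothesis $M>\dim(G/K)+1$.
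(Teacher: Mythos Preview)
Your proposal is correct and follows essentially the same route as the paper's proof: both decompose $B_\alpha$ via $q=q_1+q_2$, obtain the G\aa rding lower bound $\langle q_1(D)u,u\rangle\geq c_0\Vert u\Vert_{\psi,1}^2-\alpha_0\Vert u\Vert^2$ from Assumption~\ref{asss:1,2}~(\ref{A.1}), bound the $q_2$-term by $\tfrac{c_0}{8}\Vert u\Vert_{\psi,1}^2$ using (\ref{eq:q_2}), (\ref{eq:kappa2}) and Assumption~\ref{ass:3}, and absorb the remainder with $\alpha\Vert u\Vert^2$. The only cosmetic difference is that you explicitly compute $\alpha_0=c_0(1+2c_\psi)$ by splitting $\a*$ into $\{|\lambda|<1\}$ and $\{|\lambda|\geq 1\}$, whereas the paper simply asserts the existence of $\alpha_0$ satisfying $q_1(\lambda)\geq c_0\Psi(\lambda)^2-\alpha_0$ for all $\lambda$ (equation~(\ref{eq:alpha0})).
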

\begin{proof}
Proceed exactly as in \citet{hoh} page 57, lines 8--17. By Assumption \ref{asss:1,2} (\ref{A.1}), there is $\alpha_0>0$ such that
	\begin{equation}\label{eq:alpha0}
	q_1(\lambda) \geq  c_0\Psi(\lambda)^2-\alpha_0 \hspace{20pt} \forall\lambda\in\a*.
	\end{equation}
This may be used to prove that for all $u\in H^{\psi,1}$,
	\[\langle q_1(D)u,u\rangle \geq c_0\Vert u\Vert_{\psi,1}^2 - \alpha_0\Vert u\Vert^2,\]
at which point we can apply (\ref{eq:kappa2}) and (\ref{eq:q_2}), as well as Assumption \ref{ass:3}, to conclude
	\begin{align*}
	\left|\langle q_2(\sigma,D)u,u\rangle\right| &\leq C_M\sqrt{2(1+c_\psi)}\sum_{\beta=0}^M\Vert\Phi_\beta\Vert_{L^1(\a*,\omega)}\left\Vert\langle\cdot\rangle^{-M+1}\right\Vert_{L^1(\a*,\omega)}\Vert u\Vert_{\psi,1}^2 \\
	&= \frac{1}{8\gamma_M}\sum_{\beta=0}^M\Vert\Phi_\beta\Vert_{L^1(\a*,\omega)}\Vert u\Vert_{\psi,1}^2 \leq \frac{c_0}{8}\Vert u\Vert_{\psi,1},
	\end{align*}
for all $u\in H^{\psi,1}$. Thus, for all $u\in H^{\psi,1}$
	\begin{align*}
	\langle q(\sigma,D)u,u\rangle &\geq \langle q_1(D)u,u\rangle - \left|\langle q_2(\sigma,D)u,u\rangle\right| \\
	&\geq (c_0-\frac{c_0}{8})\Vert u\Vert_{\psi,1}^2 - \alpha_0\Vert u\Vert_{\psi,1}^2 \geq \frac{c_0}{2}\Vert u\Vert_{\psi,1}^2 - \alpha_0\Vert u\Vert_{\psi,1}^2.
	\end{align*}
Therefore, for all $\alpha\geq\alpha_0$ and $u\in H^{\psi,1}$
	\begin{align*}
	B_\alpha(u,u) &= \langle q(\sigma,D)u,u\rangle + \alpha\Vert u\Vert \geq \langle q(\sigma,D)u,u\rangle + \alpha_0\Vert u\Vert \geq \frac{c_0}{2}\Vert u\Vert_{\psi,1}^2,
	\end{align*}
\end{proof}

\begin{thm}\label{thm:weaksoln}
Let $\alpha\geq\alpha_0$. Then (\ref{eq:dense}) has a weak solution in the following sense: for all $f\in L^2(K|G|K)$ there is a unique $u\in H^{\psi,1}$ such that for all $v\in H^{\psi,1}$,
	\[B_\alpha(u,v) = \langle f,v\rangle.\]
\end{thm}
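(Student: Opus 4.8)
The plan is to apply the Lax--Milgram theorem directly. The previous theorem establishes that $B_\alpha$ extends to a continuous bilinear form on $H^{\psi,1}\times H^{\psi,1}$, and Theorem \ref{thm:coercive} establishes that, for $\alpha\geq\alpha_0$, it is coercive on $H^{\psi,1}$ with coercivity constant $c_0/2$. So the two hypotheses of Lax--Milgram on the Hilbert space $H^{\psi,1}$ are in hand; what remains is to package the right-hand side as a bounded linear functional and to confirm uniqueness.

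First I would fix $f\in L^2(K|G|K)$ and define $\ell_f:H^{\psi,1}\to\mathbb{R}$ by $\ell_f(v)=\langle f,v\rangle$. This is well-defined and bounded: by Theorem \ref{thm:aniso} (\ref{ctsembed}) we have a continuous embedding $H^{\psi,1}\hookrightarrow L^2(K|G|K)$, so there is $\kappa_3>0$ with $\Vert v\Vert\leq\kappa_3\Vert v\Vert_{\psi,1}$, whence $|\ell_f(v)|\leq\Vert f\Vert\,\Vert v\Vert\leq\kappa_3\Vert f\Vert\,\Vert v\Vert_{\psi,1}$. Thus $\ell_f\in(H^{\psi,1})^\ast$. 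Then I would invoke the Lax--Milgram theorem: since $B_\alpha$ is bounded and coercive on the real Hilbert space $H^{\psi,1}$, there is a unique $u\in H^{\psi,1}$ with $B_\alpha(u,v)=\ell_f(v)=\langle f,v\rangle$ for all $v\in H^{\psi,1}$, which is precisely the asserted weak solution. Uniqueness is part of the Lax--Milgram conclusion, but can also be seen directly: if $u_1,u_2$ both solve the problem then $B_\alpha(u_1-u_2,v)=0$ for all $v$, and taking $v=u_1-u_2$ together with coercivity gives $\frac{c_0}{2}\Vert u_1-u_2\Vert_{\psi,1}^2\leq B_\alpha(u_1-u_2,u_1-u_2)=0$, so $u_1=u_2$.

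There is essentially no obstacle here; the theorem is a bookkeeping step that collects the continuity result and Theorem \ref{thm:coercive} and feeds them into Lax--Milgram. The only minor points to be careful about are that $H^{\psi,1}$ should be treated as a real Hilbert space (consistent with the real-valued function spaces used throughout), so the real form of Lax--Milgram applies, and that density of $C_c^\infty(K|G|K)$ in $H^{\psi,1}$ (Theorem \ref{thm:aniso} (\ref{S->H_>S'})) is what legitimises extending $B_\alpha$ and $\langle f,\cdot\rangle$ from the dense subspace to all of $H^{\psi,1}$, so that the identity $B_\alpha(u,v)=\langle f,v\rangle$ genuinely holds for every $v\in H^{\psi,1}$ and not merely for smooth $v$.

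\begin{proof}
Fix $\alpha\geq\alpha_0$ and $f\in L^2(K|G|K)$, and regard $H^{\psi,1}$ as a real Hilbert space. Define $\ell_f:H^{\psi,1}\to\mathbb{R}$ by $\ell_f(v)=\langle f,v\rangle$. By Theorem \ref{thm:aniso} (\ref{ctsembed}) there is a continuous embedding $H^{\psi,1}\hookrightarrow L^2(K|G|K)$, so there exists $\kappa_3>0$ with $\Vert v\Vert\leq\kappa_3\Vert v\Vert_{\psi,1}$ for all $v\in H^{\psi,1}$. Hence, by the Cauchy--Schwarz inequality in $L^2(K|G|K)$,
	\[
	|\ell_f(v)| = |\langle f,v\rangle| \leq \Vert f\Vert\,\Vert v\Vert \leq \kappa_3\Vert f\Vert\,\Vert v\Vert_{\psi,1},
	\]
so $\ell_f$ is a bounded linear functional on $H^{\psi,1}$.

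By the previous theorem, $B_\alpha$ extends to a continuous bilinear form on $H^{\psi,1}\times H^{\psi,1}$, and by Theorem \ref{thm:coercive}, since $\alpha\geq\alpha_0$, it satisfies $B_\alpha(u,u)\geq\frac{c_0}{2}\Vert u\Vert_{\psi,1}^2$ for all $u\in H^{\psi,1}$, so $B_\alpha$ is coercive. The Lax--Milgram theorem therefore yields a unique $u\in H^{\psi,1}$ such that $B_\alpha(u,v)=\ell_f(v)=\langle f,v\rangle$ for all $v\in H^{\psi,1}$.

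For uniqueness directly: if $u_1,u_2\in H^{\psi,1}$ both satisfy $B_\alpha(u_i,v)=\langle f,v\rangle$ for all $v\in H^{\psi,1}$, then $B_\alpha(u_1-u_2,v)=0$ for all such $v$; taking $v=u_1-u_2$ and using coercivity gives $\frac{c_0}{2}\Vert u_1-u_2\Vert_{\psi,1}^2\leq B_\alpha(u_1-u_2,u_1-u_2)=0$, so $u_1=u_2$.
\end{proof}
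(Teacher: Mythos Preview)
Your proposal is correct and follows exactly the same approach as the paper: apply the Lax--Milgram theorem to the continuous, coercive bilinear form $B_\alpha$ on $H^{\psi,1}$ with the bounded linear functional $v\mapsto\langle f,v\rangle$. You have simply spelled out the verification of boundedness of $\ell_f$ and the uniqueness argument in more detail than the paper, which dispatches the whole proof in a single sentence citing Lax--Milgram.
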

\begin{proof}
Apply the Lax--Milgram theorem (Theorem 1 of \citet{evans}, pp.~297) to $B_\alpha$, using the linear functional $v\mapsto\langle f,v\rangle$.
\end{proof}

Having found a weak solution to (\ref{eq:dense}), the next task is to prove that this solution is in fact a strong solution that belongs to $C_0(K|G|K)$. This will be achieved using the Sobolev embedding of Theorem \ref{thm:aniso} (\ref{SobEmbed}). 

Just as in \citet{jacob94} Theorem 3.1 and \citet{hoh} Theorem 4.11, we have a useful lower bound for the pseudodifferential operator $q(\sigma,D)$ acting on $H^{\psi,s}$, when $s\geq 0$.
\begin{thm}\label{thm:l.b.}
Let $s\geq 0$, and suppose the symbol $q$ satisfies Assumptions \ref{asss:1,2} and \ref{ass:3}, for some $M>|s-1|+1+\dim(G/K)$. Then there is $\kappa>0$ such that for all $u\in H^{\psi,s+2}$,
	\[\Vert q(\sigma,D)u\Vert_{\psi,s} \geq \frac{c_0}{4}\Vert u\Vert_{\psi,s+2}-\kappa\Vert u \Vert.\]
\end{thm}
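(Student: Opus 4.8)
The plan is to adapt the perturbation argument of \citet{hoh} Theorem 4.11 (see also \citet{jacob94} Theorem 3.1), using the decomposition $q(\sigma,D)=q_1(D)+q_2(\sigma,D)$ of (\ref{eq:q}). I would prove a lower bound for $q_1(D)$ and an upper bound for $q_2(\sigma,D)$, each regarded as an operator $H^{\psi,s+2}\to H^{\psi,s}$ --- both extensions exist since the standing hypothesis $M>|s-1|+1+\dim(G/K)$ is exactly that of Theorem \ref{thm:H^psi,s} --- and then combine them through $\Vert q(\sigma,D)u\Vert_{\psi,s}\geq\Vert q_1(D)u\Vert_{\psi,s}-\Vert q_2(\sigma,D)u\Vert_{\psi,s}$. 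It suffices to prove the estimate for $u\in C_c^\infty(K|G|K)$: this space is dense in $H^{\psi,s+2}$ by Theorem \ref{thm:aniso} (\ref{S->H_>S'}), the left-hand side is continuous in $\Vert\cdot\Vert_{\psi,s+2}$ because $q(\sigma,D)$ is bounded $H^{\psi,s+2}\to H^{\psi,s}$, and the right-hand side is continuous in $\Vert\cdot\Vert_{\psi,s+2}$ because $\Vert u\Vert=\Vert u\Vert_{\psi,0}\leq\Vert u\Vert_{\psi,s+2}$ by Theorem \ref{thm:aniso} (\ref{ctsembed}).

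For $q_1(D)$, I would use Assumption \ref{asss:1,2} (\ref{A.1}) in the form $q_1(\lambda)\geq c_0\Psi(\lambda)^2$ for $|\lambda|\geq 1$ (recalling $1+\psi=\Psi^2$), together with $q_1\geq 0$ everywhere, since a real-valued negative definite function is non-negative. Expanding $\Vert q_1(D)u\Vert_{\psi,s}^2$ by the Plancherel identity and dropping the contribution of $\{|\lambda|<1\}$, on which $\Psi^{2(s+2)}$ is bounded by a constant $C_0$ (by continuity of $\psi$), gives
	\[\Vert q_1(D)u\Vert_{\psi,s}^2\geq c_0^2\int_{\{|\lambda|\geq 1\}}\Psi(\lambda)^{2(s+2)}|\hat u(\lambda)|^2\,\omega(d\lambda)\geq c_0^2\big(\Vert u\Vert_{\psi,s+2}^2-C_0\Vert u\Vert^2\big).\]
Hence $\Vert q_1(D)u\Vert_{\psi,s}\geq c_0\Vert u\Vert_{\psi,s+2}-c_0\sqrt{C_0}\,\Vert u\Vert$, the inequality being trivial when its right-hand side is negative and otherwise following by taking square roots and using $\sqrt{a-b}\geq\sqrt{a}-\sqrt{b}$.

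For $q_2(\sigma,D)$, I would invoke directly the estimate (\ref{eq:q2est}) established inside the proof of Theorem \ref{thm:H^psi,s}:
	\[\Vert q_2(\sigma,D)u\Vert_{\psi,s}\leq C_M\sum_{\beta=0}^M\Vert\Phi_\beta\Vert_1\Big(\Vert\langle\cdot\rangle^{-M}\Vert_{L^1(\a*,\omega)}\Vert u\Vert_{\psi,s+2}+C_{s,\psi}\Vert u\Vert_{\psi,s+1}\Big).\]
Because $\langle\cdot\rangle\geq 1$ gives $\Vert\langle\cdot\rangle^{-M}\Vert_{L^1(\a*,\omega)}\leq\Vert\langle\cdot\rangle^{-M+1}\Vert_{L^1(\a*,\omega)}$, Assumption \ref{ass:3} forces the coefficient of $\Vert u\Vert_{\psi,s+2}$ to be at most $c_0/8$. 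The remaining summand is of lower order: since $0<s+1<s+2$ (here $s\geq 0$ is used), Theorem \ref{thm:aniso} (\ref{s3s2s1}) yields, for every $\epsilon>0$, a constant $c(\epsilon)$ with $\Vert u\Vert_{\psi,s+1}\leq\epsilon\Vert u\Vert_{\psi,s+2}+c(\epsilon)\Vert u\Vert$; taking $\epsilon$ small enough that $C_M\big(\sum_{\beta=0}^M\Vert\Phi_\beta\Vert_1\big)C_{s,\psi}\,\epsilon\leq c_0/8$ gives $\Vert q_2(\sigma,D)u\Vert_{\psi,s}\leq\tfrac{c_0}{4}\Vert u\Vert_{\psi,s+2}+\kappa_0\Vert u\Vert$ for a suitable $\kappa_0>0$. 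Subtracting this from the $q_1$ bound,
	\[\Vert q(\sigma,D)u\Vert_{\psi,s}\geq\Big(c_0-\tfrac{c_0}{4}\Big)\Vert u\Vert_{\psi,s+2}-\big(c_0\sqrt{C_0}+\kappa_0\big)\Vert u\Vert\geq\tfrac{c_0}{4}\Vert u\Vert_{\psi,s+2}-\kappa\Vert u\Vert,\]
with $\kappa:=c_0\sqrt{C_0}+\kappa_0$.

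The only real point requiring care is the bookkeeping of constants: one must check that the total coefficient of $\Vert u\Vert_{\psi,s+2}$ contributed by $q_2(\sigma,D)$ --- both its ``principal'' part, tamed by Assumption \ref{ass:3} via the comparison $\Vert\langle\cdot\rangle^{-M}\Vert_{L^1(\a*,\omega)}\leq\Vert\langle\cdot\rangle^{-M+1}\Vert_{L^1(\a*,\omega)}$, and the portion of $\Vert u\Vert_{\psi,s+1}$ split off by the interpolation inequality --- stays strictly below $c_0$, so that a positive multiple of $\Vert u\Vert_{\psi,s+2}$ survives on the right. The value $c_0/4$ is merely convenient; the argument in fact produces the larger constant $3c_0/4$. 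Beyond this, no new difficulty relative to the Euclidean case arises, the relevant $\mathbb{R}^d$ tools (Plancherel, the interpolation inequality of Theorem \ref{thm:aniso} (\ref{s3s2s1}), and the integrability $\langle\cdot\rangle^{-M}\in L^1(\a*,\omega)$ from Lemma \ref{lem:<.>}) having already been transferred to the symmetric-space setting.
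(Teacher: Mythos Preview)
Your proposal is correct and follows essentially the same route as the paper: decompose $q(\sigma,D)=q_1(D)+q_2(\sigma,D)$, bound $q_1$ from below and $q_2$ from above via (\ref{eq:q2est}) and Assumption \ref{ass:3}, then absorb the lower-order $\Vert u\Vert_{\psi,s+1}$ term using Theorem \ref{thm:aniso} (\ref{s3s2s1}). The one genuine difference is in the $q_1$ step: the paper uses the global inequality (\ref{eq:alpha0}), i.e.\ $q_1\geq c_0\Psi^2-\alpha_0$, which after a further application of the interpolation inequality yields the constant $c_0/2$, whereas you split the Plancherel integral at $|\lambda|=1$ and use $q_1\geq c_0\Psi^2$ only where it actually holds, obtaining the sharper constant $c_0$ directly and without interpolation. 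Your variant is cleaner and, as you note, gives $3c_0/4$ rather than the $c_0/4$ that the paper's bookkeeping produces; either suffices for the stated result.
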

\begin{proof}
The proof is formally no different to the sources mentioned: let $u\in H^{\psi,s+2}$, and use  (\ref{eq:alpha0}) and Theorem \ref{thm:aniso} (\ref{s3s2s1}) to prove that 
	\begin{equation}\label{eq:q1}
	\Vert q_1(D)u\Vert_{\psi,s} \geq \frac{c_0}{2}\Vert u\Vert_{\psi,s+2} - \kappa_1\Vert u\Vert,
	\end{equation}
for some $\kappa_1>0$. Recall the estimate (\ref{eq:q2est}) of $\Vert q_2(\sigma,D)u\Vert_{\psi,s}$ from the proof of Theorem \ref{thm:H^psi,s}. In light of Assumption \ref{ass:3} and the particular form chosen for $\gamma_M$, one can use (\ref{eq:q2est}) to show that
	\begin{align*}
	\Vert q_2(\sigma,D)u\Vert_{\psi,s} &\leq C_M\sum_{\beta=0}^M\Vert\Phi_\beta\Vert_{L^1(\a*,\omega)}\Big(\left\Vert\langle\cdot\rangle^{-M}\right\Vert_{L^1(\a*,\omega)}\Vert u\Vert_{\psi,s+2} + C_{s,\psi}\Vert u\Vert_{\psi,s+1}\Big) \\
	&\leq C_Mc_0\gamma_M\Big(\left\Vert\langle\cdot\rangle^{-M}\right\Vert_{L^1(\a*,\omega)}\Vert u\Vert_{\psi,s+2} + C_{s,\psi}\Vert u\Vert_{\psi,s+1}\Big) \\
	&\leq \frac{c_0}{8}\Vert u\Vert_{\psi,s+2} + c\Vert u\Vert_{\psi,s+1},
	\end{align*}
where $c>0$ is a constant. Using Theorem \ref{thm:aniso} (\ref{s3s2s1}) once again, let $\kappa_2>0$ such that
	\[c\Vert u\Vert_{\psi,s+1} \leq \frac{c_0}{8}\Vert u\Vert_{\psi,s+2} + \kappa_2\Vert u\Vert.\]
	Then, by the above,
	\begin{equation}\label{eq:q2}
	\Vert q_2(\sigma,D)u\Vert_{\psi,s} \leq \frac{c_0}{4}\Vert u\Vert_{\psi,s+2} + \kappa_2\Vert u\Vert.
	\end{equation}
Combining (\ref{eq:q1}) and (\ref{eq:q2}), we get 
	\[\Vert q(\sigma,D)u\Vert_{\psi,s} \geq \Vert q_1(D)u\Vert_{\psi,s} - \Vert q_2(\sigma,D)u\Vert_{\psi,s} \geq \frac{c_0}{4}\Vert u\Vert_{\psi,s+2} - (\kappa_1+\kappa_2)\Vert u\Vert.\]
\end{proof}

The proof of the next theorem makes use of a particular family $(J_\epsilon,0<\epsilon\leq 1)$ of bounded linear operators on $L^2(K|G|K)$, which will play the role of a Friedrich mollifier, but in the noncompact symmetric space setting. 

First note that by identifying $\al$ with $\mathbb{R}^m$ via our chosen basis, it makes sense to consider Friedrich mollifiers on $\al$. For $0<\epsilon\leq 1$ and $H\in\al$, let
	\[l(H) := C_0e^{\frac{1}{|H|^2-1}}\ind_{B_1(0)}(H), ~~\text{ and }~~ l_\epsilon(H) := \epsilon^{-m}l(H/\epsilon),\]
where $C_0>0$ is a constant chosen so that $\int_{\al}l(H)dH=1$. This mollifier is used frequently in \citet{evans} (see Appendix C.4, pp.~629), and \citet{jacob94} and \citet{hoh} use it to pass from a weak solution result to a strong solution result.

Observe that $l,l_\epsilon\in\mathcal{S}(\al)^W$ for all $0<\epsilon\leq 1$. Using Theorem \ref{thm:ComDiag}, let $j,j_\epsilon\in\mathcal{S}(K|G|K)$ be such that 
	\[\hat{j} = \mathscr{F}(l), \hspace{5pt} \text{ and } \hspace{5pt} \hat{j_\epsilon} = \mathscr{F}(l_\epsilon), \hspace{20pt} \forall 0<\epsilon\leq 1,\]
where $\mathscr{F}$ denotes the Euclidean Fourier transform (see equation (\ref{eq:EucF})). For $0<\epsilon\leq 1$, let $J_\epsilon$ be the convolution operator defined on $L^2(K|G|K)$ by
	\[J_\epsilon u = j_\epsilon\ast u \hspace{20pt} \forall f\in L^2(K|G|K).\]

The most important properties of $(J_\epsilon,0<\epsilon\leq 1)$ needed for the	proof of Theorem \ref{thm:strong} are stated below, and proven in the Section \ref{sec:PfofProp}.
\begin{prop}\label{prop:Fmoll}
\begin{enumerate}
\item\label{epslam} $\hat{j_\epsilon}(\lambda)=\hat{j}(\epsilon\lambda)$ for all $0<\epsilon\leq 1$ and $\lambda\in\a*$.
\item\label{s-a} For all $0<\epsilon\leq 1$, $J_\epsilon$ is a self-adjoint contraction of $L^2(K|G|K)$.
\item $J_\epsilon u\in H^{\psi,s}$ for all $s\geq 0$, $u\in L^2(K|G|K)$ and $0<\epsilon\leq 1$, and if $u\in H^{\psi,s}$, then
	\[\Vert J_\epsilon u\Vert_{\psi,s} \leq \Vert u\Vert_{\psi,s}.\]
\item\label{J_epsu-u} For all $s\geq 0$ and $u\in H^{\psi,s}$, $\Vert J_\epsilon u - u\Vert_{\psi,s}\rightarrow 0$ as $\epsilon\rightarrow 0$.
\end{enumerate}
\end{prop}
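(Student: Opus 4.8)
The plan is to observe that, after spherical transformation, each $J_\epsilon$ is simply multiplication by $\hat{j_\epsilon}$, and then to read all four assertions off from elementary properties of this multiplier. Since $j_\epsilon\in\mathcal{S}(K|G|K)\subseteq L^1(K|G|K)\cap L^2(K|G|K)$, Young's convolution inequality on the unimodular group $G$ shows $J_\epsilon u = j_\epsilon\ast u\in L^2(K|G|K)$ whenever $u\in L^2(K|G|K)$; and the convolution theorem for the spherical transform — valid on $L^1(K|G|K)$, and extended to the pair $(j_\epsilon,u)\in L^1\times L^2$ by approximating $u$ in $L^2$ by elements of $C_c^\infty(K|G|K)$ — gives $\widehat{J_\epsilon u}=\hat{j_\epsilon}\,\hat{u}$. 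Thus $J_\epsilon$ is a spherical Fourier multiplier with symbol $\hat{j_\epsilon}$, and everything else is bookkeeping about $\hat{j_\epsilon}$.

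First, part (\ref{epslam}) is the dilation rule for the Euclidean Fourier transform: the substitution $H\mapsto\epsilon H$ in $\mathscr{F}(l_\epsilon)(\lambda)=\int_{\al}e^{-i\lambda(H)}\epsilon^{-m}l(H/\epsilon)\,dH$, together with linearity of $\lambda\in\a*$, yields $\mathscr{F}(l_\epsilon)(\lambda)=\mathscr{F}(l)(\epsilon\lambda)$, i.e.\ $\hat{j_\epsilon}(\lambda)=\hat{j}(\epsilon\lambda)$. Two facts flow from this and are used throughout: since $l\ge 0$ with $\int_{\al}l=1$, we have $\|\hat{j_\epsilon}\|_\infty=\|\hat{j}\|_\infty\le\int_{\al}l(H)\,dH=1$; and since $l$ is radial, hence even and real-valued, $\hat{j_\epsilon}=\mathscr{F}(l_\epsilon)$ is real-valued. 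For part (\ref{s-a}), the contraction property is $\|J_\epsilon u\|=\|\hat{j_\epsilon}\hat{u}\|_{L^2(\a*,\omega)}\le\|\hat{u}\|_{L^2(\a*,\omega)}=\|u\|$ by Plancherel's identity (\ref{eq:Plancherel}) and $\|\hat{j_\epsilon}\|_\infty\le 1$, and self-adjointness is $\langle J_\epsilon u,v\rangle=\int_{\a*}\hat{j_\epsilon}\hat{u}\,\overline{\hat{v}}\,\omega=\int_{\a*}\hat{u}\,\overline{\hat{j_\epsilon}\hat{v}}\,\omega=\langle u,J_\epsilon v\rangle$, using that $\hat{j_\epsilon}$ is real. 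For the third assertion, Theorem \ref{thm:ComDiag} gives $\hat{j_\epsilon}=\mathscr{F}(l_\epsilon)\in\mathcal{S}(\a*)^W$, while Proposition \ref{prop:negdef}(\ref{c_phi}) gives $(1+\psi(\lambda))^s\le(1+c_\psi)^s\langle\lambda\rangle^{2s}$ for $s\ge 0$; since $\langle\lambda\rangle^{2s}|\hat{j_\epsilon}(\lambda)|^2$ is bounded on $\a*$ by rapid decrease, $(1+\psi(\lambda))^s|\hat{j_\epsilon}(\lambda)|^2\le C_{s,\epsilon}$ uniformly, so $\|J_\epsilon u\|_{\psi,s}^2\le C_{s,\epsilon}\|u\|^2<\infty$, giving $J_\epsilon u\in H^{\psi,s}$; and $\|J_\epsilon u\|_{\psi,s}\le\|u\|_{\psi,s}$ for $u\in H^{\psi,s}$ is immediate from $|\hat{j_\epsilon}|\le 1$ inside the integral defining $\|\cdot\|_{\psi,s}$.

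Part (\ref{J_epsu-u}) I would prove by dominated convergence. Writing $\widehat{J_\epsilon u-u}(\lambda)=(\hat{j}(\epsilon\lambda)-1)\hat{u}(\lambda)$,
\[\|J_\epsilon u-u\|_{\psi,s}^2=\int_{\a*}(1+\psi(\lambda))^s\,|\hat{j}(\epsilon\lambda)-1|^2\,|\hat{u}(\lambda)|^2\,\omega(d\lambda);\]
for each fixed $\lambda$, continuity of $\hat{j}$ and $\hat{j}(0)=\int_{\al}l=1$ give $\hat{j}(\epsilon\lambda)-1\to 0$ as $\epsilon\to 0$, and $|\hat{j}(\epsilon\lambda)-1|^2\le 4$ supplies the integrable majorant $4(1+\psi(\lambda))^s|\hat{u}(\lambda)|^2$ (integrable precisely because $u\in H^{\psi,s}$), so the integral tends to $0$. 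The one step needing genuine care, rather than the formal computations above, is the justification that the spherical transform intertwines $u\mapsto j_\epsilon\ast u$ with multiplication by $\hat{j_\epsilon}$ for $u$ only in $L^2(K|G|K)$, and that $J_\epsilon$ is thereby a well-defined bounded operator on $L^2(K|G|K)$; this rests on the Young inequality and the $C_c^\infty(K|G|K)$-density argument indicated above, together with the facts recalled in Section \ref{sec:prelim} that the spherical transform is an isometry of $L^2(K|G|K)$ onto $L^2(\a*,\omega)^W$ and a homomorphism of the convolution algebra $L^1(K|G|K)$.
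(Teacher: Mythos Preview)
Your argument is correct, and for parts (\ref{epslam})--(3) it is essentially identical to the paper's proof: the dilation computation, the real-valuedness of $\hat j$ from evenness of $l$, Plancherel for self-adjointness and contraction, and the Schwartz bound $\langle\lambda\rangle^{2s}|\hat{j_\epsilon}(\lambda)|^2\le C_{s,\epsilon}$ combined with Proposition \ref{prop:negdef}(\ref{c_phi}) for the Sobolev mapping property. The genuine difference is in part (\ref{J_epsu-u}). The paper does not argue by dominated convergence; instead it first takes $u\in C_c^\infty(K|G|K)$, sets $v=\mathscr{F}^{-1}(\hat u)\in\mathcal{S}(\al)^W$, invokes the classical mollifier convergence $l_\epsilon\ast v\to v$ in the Euclidean Sobolev space $W^s(\al)$ (citing Evans), transfers this to an $L^2(\a*,\omega)$ statement via the bound (\ref{eq:hcc-bnd}) on $|\hcc(\lambda)|^{-2}$, upgrades to the $\psi$-norm using Proposition \ref{prop:negdef}(\ref{c_phi}), and finally passes from $C_c^\infty(K|G|K)$ to $H^{\psi,s}$ by density. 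Your route is more direct: pointwise $\hat j(\epsilon\lambda)\to\hat j(0)=1$ and the uniform majorant $4(1+\psi)^s|\hat u|^2\in L^1(\a*,\omega)$ immediately give the result for every $u\in H^{\psi,s}$, with no appeal to Euclidean Sobolev theory, to the Harish-Chandra estimate on $\hcc$, or to density. The paper's approach makes the link to classical mollification on $\al$ explicit through Theorem \ref{thm:ComDiag}, but your dominated convergence argument is shorter and self-contained.
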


The following commutator estimate will also be useful in the proof of Theorem \ref{thm:strong}.
\begin{lem}\label{lem:commutator}
Let $s\geq 0$, and suppose $q$ is a continuous negative definite symbol satisfying Assumption \ref{asss:1,2} (\ref{A.2.M}) for $M>|s-1|+1+\dim(G/K)$. Then there is $c>0$ such that for all $ 0<\epsilon\leq 1$ and all $u\in C_c^\infty(K|G|K)$,
	\[\Vert[J_\epsilon,q(\sigma,D)]u\Vert_{\psi,s} \leq c\Vert u\Vert_{\psi,s+1}.\]
\end{lem}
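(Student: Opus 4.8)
The plan is to mimic the proofs of Theorems~\ref{thm:H^psi,2} and~\ref{thm:H^psi,s}, the one genuinely new ingredient being a quantitative bound on the increments of $\hat{j}$. Write $q = q_1 + q_2$ as in~(\ref{eq:q}) with $q_1(\lambda) = q(\sigma_0,\lambda)$. The first observation is that $q_1(D)$ and $J_\epsilon$ are both spherical multiplier operators --- on the transform side they act by multiplication by $q_1(\lambda)$ and, by Proposition~\ref{prop:Fmoll}~(\ref{epslam}), by $\hat{j}(\epsilon\lambda)$ --- so they commute, and hence $[J_\epsilon, q(\sigma,D)]u = [J_\epsilon, q_2(\sigma,D)]u$ for every $u \in C_c^\infty(K|G|K)$; only the perturbation term needs estimating, and (as in Theorems~\ref{thm:H^psi,2} and~\ref{thm:H^psi,s}) the whole argument then uses only Assumption~\ref{asss:1,2}~(\ref{A.2.M}). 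Combining Proposition~\ref{prop:Fmoll}~(\ref{epslam}) with~(\ref{eq:q_2^wedge}), and justifying the interchange of integrals exactly as in Remark~\ref{rmks:Φ_βlem}~(\ref{calc}) (the extra \emph{bounded} factor $\hat{j}(\epsilon\lambda) - \hat{j}(\epsilon\eta)$ does not affect integrability), one gets, for $u \in C_c^\infty(K|G|K)$ and $\lambda \in \a*$,
\[
([J_\epsilon, q_2(\sigma,D)]u)^{\wedge}(\lambda) = \int_{\a*} \hat{F}_{\lambda,\eta}(-\eta)\bigl(\hat{j}(\epsilon\lambda) - \hat{j}(\epsilon\eta)\bigr)\hat{u}(\eta)\,\omega(d\eta).
\]

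The crucial step is to bound $\hat{j}(\epsilon\lambda) - \hat{j}(\epsilon\eta)$. Since $j \in \mathcal{S}(K|G|K)$ by construction, Theorem~\ref{thm:ComDiag} gives $\hat{j} = \mathscr{F}(l) \in \mathcal{S}(\a*)^W$, so $\hat{j}$ is bounded and $\nabla\hat{j}$ is rapidly decreasing. Writing $\hat{j}(\epsilon\lambda) - \hat{j}(\epsilon\eta) = \epsilon\int_0^1 (\lambda - \eta)\cdot(\nabla\hat{j})\bigl(\epsilon(\eta + t(\lambda-\eta))\bigr)\,dt$, I would split according to whether $|\lambda-\eta| \le \tfrac{1}{2}\langle\eta\rangle$ or not: in the first range the whole segment $\eta + t(\lambda-\eta)$, $0\le t\le 1$, has norm comparable to $\langle\eta\rangle$, so the rapid decay of $\nabla\hat{j}$ may be exploited; in the second range one simply uses $|\hat{j}(\epsilon\lambda) - \hat{j}(\epsilon\eta)| \le 2\Vert\hat{j}\Vert_\infty \le 2\Vert\hat{j}\Vert_\infty\langle\lambda-\eta\rangle\langle\eta\rangle^{-1}$. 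This should produce a constant $c_j > 0$ depending only on $l$ with
\[
\bigl|\hat{j}(\epsilon\lambda) - \hat{j}(\epsilon\eta)\bigr| \le c_j\,\langle\lambda-\eta\rangle\,\langle\eta\rangle^{-1}, \qquad \forall\,\lambda,\eta\in\a*,\ 0 < \epsilon \le 1.
\]
The two features that matter are uniformity in $\epsilon$ (from $\epsilon \le 1$) and the gain of the factor $\langle\eta\rangle^{-1}$: this is precisely the mechanism by which the commutator is one order lower than $q_2(\sigma,D)$ itself, and I expect the bulk of the work to lie here. A cruder bound such as $|\hat{j}(\epsilon\lambda) - \hat{j}(\epsilon\eta)| \le c_j\langle\lambda-\eta\rangle$ only gives $\Vert[J_\epsilon, q(\sigma,D)]u\Vert_{\psi,s} \le c\Vert u\Vert_{\psi,s+2}$, which would not serve the mollification argument in Theorem~\ref{thm:strong}.

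From here the estimate is routine. Inserting the displayed bound together with Lemma~\ref{lem:Φ_β} (applied with $\mu = -\eta$, so $|\hat{F}_{\lambda,\eta}(-\eta)| \le C_M\sum_{\beta=0}^M\Vert\Phi_\beta\Vert_1\langle\lambda-\eta\rangle^{-M}(1+\psi(\eta))$) into the integral, and using $\langle\eta\rangle^{-1} \le \sqrt{1+c_\psi}\,\Psi(\eta)^{-1}$ --- a consequence of Proposition~\ref{prop:negdef}~(\ref{c_phi}) --- to absorb one power of $\Psi(\eta)$, I get
\[
\bigl|([J_\epsilon, q_2(\sigma,D)]u)^{\wedge}(\lambda)\bigr| \le C\int_{\a*} \langle\lambda-\eta\rangle^{-M+1}\,\Psi(\eta)\,|\hat{u}(\eta)|\,\omega(d\eta),
\]
with $C$ a fixed multiple of $\sum_{\beta}\Vert\Phi_\beta\Vert_1$, independent of $\epsilon$. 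Multiplying through by $\Psi(\lambda)^s$, using the generalised Peetre inequality (Proposition~\ref{prop:negdef}~(\ref{peetre})) to shift powers of $\Psi$ onto the difference variable, then taking the $L^2(\a*,\omega)$-norm in $\lambda$ and applying the Cauchy--Schwarz and Young inequalities exactly as in the proof of Theorem~\ref{thm:H^psi,2} (and, if a residual lower-order term appears, absorbing it via Theorem~\ref{thm:aniso}~(\ref{ctsembed})), I would arrive at an estimate of the shape
\[
\Vert[J_\epsilon, q(\sigma,D)]u\Vert_{\psi,s} \le C'\,\bigl\Vert\langle\cdot\rangle^{-N}\bigr\Vert_{L^1(\a*,\omega)}\,\Vert u\Vert_{\psi,s+1},
\]
valid for all $u \in C_c^\infty(K|G|K)$ and $0 < \epsilon \le 1$, with $C'$ independent of $\epsilon$ and with an exponent $N = N(s,M)$ that satisfies $N > \dim(G/K)$ exactly because $M > |s-1| + 1 + \dim(G/K)$; by Lemma~\ref{lem:<.>} the convolution kernel is then integrable, and the lemma follows. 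Apart from the bound on $\hat{j}(\epsilon\lambda) - \hat{j}(\epsilon\eta)$ and the use of the fractional Laplacian inside Lemma~\ref{lem:Φ_β}, the argument runs formally parallel to the corresponding commutator lemma in \citet{hoh}, Chapter~4.
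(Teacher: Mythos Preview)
Your proposal is correct and matches the paper's own proof: both reduce to $q_2$ by noting that $J_\epsilon$ and $q_1(D)$ are spherical multipliers and hence commute, derive the transform-side formula $([J_\epsilon,q(\sigma,D)]u)^\wedge(\lambda)=\int_{\a*}\hat F_{\lambda,\eta}(-\eta)\bigl(\hat j(\epsilon\lambda)-\hat j(\epsilon\eta)\bigr)\hat u(\eta)\,\omega(d\eta)$, and then follow Hoh's Theorem~4.4. You have in fact supplied more detail than the paper, which simply states that ``a straightforward adaptation'' of Hoh's argument completes the proof; in particular you correctly isolate the uniform increment bound on $\hat j$ as the mechanism producing the gain of one order.
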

\begin{proof} 
Let $0<\epsilon\leq 1$ and $u\in C_c^\infty(K|G|K)$, and observe that by Proposition \ref{prop:Fmoll} (\ref{s-a}),
	\[[J_\epsilon,q_1(D)]u)^\wedge(\lambda) = \hat{j}(\epsilon\lambda)q_1(\lambda)\hat{u}(\lambda) - q_1(\lambda)\hat{j}(\epsilon\lambda)\hat{u}(\lambda) = 0,\]
for all $\lambda\in\a*$, so $[J_\epsilon,q_1(D)]u = 0$. For $\lambda,\eta\in\a*$, let $F_{\lambda,\eta}=\phi_{-\lambda}q_2(\cdot,\eta)$, as previously (c.f.~(\ref{eq:F})). Then by (\ref{eq:calc}) and Proposition \ref{prop:Fmoll} (\ref{epslam}), for all $\lambda\in\a*$,
	\begin{align*}
	\left([J_\epsilon,q(\sigma,D)]u\right)^\wedge(\lambda) &= (J_\epsilon q_2(\sigma,D)u)^\wedge(\lambda) - (q_2(\sigma,D)J_\epsilon u)^\wedge(\lambda) \\
	&=\hat{j}(\epsilon\lambda)(q_2(\sigma,D)u)^\wedge(\lambda) - \int_{\a*}\hat{F}_{\lambda,\eta}(-\eta)(J_\epsilon u)^\wedge(\eta)\omega(d\eta) \\
	&=\hat{j}(\epsilon\lambda)(q_2(\sigma,D)u)^\wedge(\lambda) - \int_{\a*}\hat{F}_{\lambda,\eta}(-\eta)\hat{j}(\epsilon\eta)\hat{u}(\eta)\omega(d\eta).
	\end{align*}
Applying (\ref{eq:calc}) once more,
	\begin{equation}\label{eq:calc'}
	\left([J_\epsilon,q(\sigma,D)]u\right)^\wedge(\lambda) = \int_{\a*}\hat{F}_{\lambda,\eta}(-\eta)\left(\hat{j}(\epsilon\lambda)-\hat{j}(\epsilon\eta)\right)\hat{u}(\eta)\omega(d\eta),
	\end{equation}
for all $\lambda\in\a*$. From here, a straightforward adaptation to the proof of \citet{hoh} Theorem 4.4, pp.~51--52, with (\ref{eq:calc'}) replacing \citet{hoh} (4.23), completes the proof of the lemma.
\end{proof}

We are now ready to state and prove that, subject to our conditions, a strong solution to (\ref{eq:dense}) exists, and belongs to an anisotropic Sobolev space of suitably high order.
\begin{thm}\label{thm:strong}
Let $\alpha_0$ be as in Theorem \ref{thm:coercive}, let $\alpha\geq\alpha_0$, and let $s\geq 0$. Suppose that the continuous negative definite symbol $q$ satisfies Assumptions \ref{asss:1,2} and \ref{ass:3}, where $M>|s-1|+1+\dim(G/K)$. Then for all $f\in H^{\psi,s}$, there is a unique $u\in H^{\psi,s+2}$ such that
	\begin{equation}\label{eq:strong}
	(q(\sigma,D)+\alpha)u = f.
	\end{equation}
\end{thm}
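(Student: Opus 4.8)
The plan is to reproduce, in the present setting, the passage from a weak to a strong solution used in \citet{jacob94} Theorem 3.1 and \citet{hoh} Theorem 4.11; the inputs are Theorem \ref{thm:weaksoln} (a weak solution via Lax--Milgram), Theorem \ref{thm:coercive} (coercivity of $B_\alpha$), Theorem \ref{thm:l.b.} (the a priori lower bound), Lemma \ref{lem:commutator} (the mollifier commutator estimate), together with the mapping properties of Theorems \ref{thm:aniso}, \ref{thm:H^psi,2}, \ref{thm:H^psi,s} and the mollifier properties of Proposition \ref{prop:Fmoll}. First I would note that $f\in H^{\psi,s}\hookrightarrow L^2(K|G|K)$ by Theorem \ref{thm:aniso} (\ref{ctsembed}), so Theorem \ref{thm:weaksoln} supplies a unique $u\in H^{\psi,1}$ with $B_\alpha(u,v)=\langle f,v\rangle$ for all $v\in H^{\psi,1}$; the task is then to show $u$ in fact lies in $H^{\psi,s+2}$ and that (\ref{eq:strong}) holds strongly. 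Uniqueness is the easy half: if $u_1,u_2\in H^{\psi,s+2}$ both solve (\ref{eq:strong}), then $w:=u_1-u_2\in H^{\psi,s+2}\hookrightarrow H^{\psi,1}$ has $(q(\sigma,D)+\alpha)w=0$, hence $B_\alpha(w,v)=0$ first for $v\in C_c^\infty(K|G|K)$ and then, by continuity of $B_\alpha$ on $H^{\psi,1}\times H^{\psi,1}$ and density, for every $v\in H^{\psi,1}$; taking $v=w$ and using Theorem \ref{thm:coercive} (legitimate since $\alpha\geq\alpha_0$) forces $\tfrac{c_0}{2}\Vert w\Vert_{\psi,1}^2\leq B_\alpha(w,w)=0$, so $w=0$.

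For regularity, set $u_\epsilon:=J_\epsilon u$, which by Proposition \ref{prop:Fmoll} lies in $H^{\psi,t}$ for every $t\geq 0$, so $q(\sigma,D)u_\epsilon$ is a genuine element of an anisotropic Sobolev space by Theorem \ref{thm:H^psi,s}. Testing the weak equation against $J_\epsilon w$ for $w\in C_c^\infty(K|G|K)$, moving $J_\epsilon$ across using its self-adjointness (Proposition \ref{prop:Fmoll} (\ref{s-a})) and the identity $q(\sigma,D)J_\epsilon=J_\epsilon q(\sigma,D)+[q(\sigma,D),J_\epsilon]$ — checked on $C_c^\infty(K|G|K)$ and extended to $u\in H^{\psi,1}$ by density, the commutator being bounded by Lemma \ref{lem:commutator} — one arrives at
\[q(\sigma,D)u_\epsilon=J_\epsilon f-\alpha u_\epsilon+[q(\sigma,D),J_\epsilon]u\]
as an identity in $L^2(K|G|K)$. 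I would then bound the right-hand side in $\Vert\cdot\Vert_{\psi,s}$ using $\Vert J_\epsilon f\Vert_{\psi,s}\leq\Vert f\Vert_{\psi,s}$ and $\Vert u_\epsilon\Vert_{\psi,s}\leq\Vert u\Vert_{\psi,s}$ from Proposition \ref{prop:Fmoll}, and $\Vert[q(\sigma,D),J_\epsilon]u\Vert_{\psi,s}\leq c\Vert u\Vert_{\psi,s+1}$ from Lemma \ref{lem:commutator}, and feed the result into the a priori bound of Theorem \ref{thm:l.b.} rewritten as
\[\Vert u_\epsilon\Vert_{\psi,s+2}\leq\tfrac{4}{c_0}\big(\Vert q(\sigma,D)u_\epsilon\Vert_{\psi,s}+\kappa\Vert u_\epsilon\Vert\big),\]
which yields a bound on $\Vert u_\epsilon\Vert_{\psi,s+2}$ uniform in $\epsilon$ — provided the norms $\Vert u\Vert_{\psi,s}$ and $\Vert u\Vert_{\psi,s+1}$ on the right are already finite. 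With such a uniform bound, weak compactness of balls in the Hilbert space $H^{\psi,s+2}$ gives a subsequence $u_{\epsilon_k}\rightharpoonup\tilde u\in H^{\psi,s+2}$; since $u_\epsilon\to u$ in $H^{\psi,1}$ by Proposition \ref{prop:Fmoll} (\ref{J_epsu-u}) and $H^{\psi,s+2}\hookrightarrow H^{\psi,1}$, the limits coincide, so $u\in H^{\psi,s+2}$.

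The caveat just flagged — that the right-hand norms must already be finite, whereas Theorem \ref{thm:weaksoln} only delivers $u\in H^{\psi,1}$ — is where the real work sits, and is handled exactly as in \citet{hoh} Theorem 4.11 by a bootstrap: one first runs the argument above with the smallest admissible exponent to gain two orders of regularity, then repeats, each pass being permissible precisely because the hypothesis $M>|s-1|+1+\dim(G/K)$ is what unlocks Lemma \ref{lem:commutator} and Theorem \ref{thm:l.b.} at the exponent in play. I expect this bookkeeping — tracking which exponents are licensed by $M$ at each stage — to be the fiddliest part, though it is essentially mechanical, since the genuinely symmetric-space-specific content (a Friedrichs mollifier compatible with the spherical transform, and the anisotropic Sobolev calculus) has already been isolated in Proposition \ref{prop:Fmoll}, Lemma \ref{lem:commutator} and Theorem \ref{thm:aniso} via the transfer principle of Theorem \ref{thm:ComDiag}. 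Finally, with $u\in H^{\psi,s+2}\subseteq H^{\psi,2}$ secured, $q(\sigma,D)u$ is a genuine element of $H^{\psi,s}$ by Theorem \ref{thm:H^psi,s}, and for $v\in C_c^\infty(K|G|K)$ the equality $\langle(q(\sigma,D)+\alpha)u,v\rangle=B_\alpha(u,v)$ follows by continuity (Theorem \ref{thm:H^psi,2}) and density from the definition of $B_\alpha$ on $C_c^\infty(K|G|K)\times C_c^\infty(K|G|K)$; combining with $B_\alpha(u,v)=\langle f,v\rangle$ and the density of $C_c^\infty(K|G|K)$ in $L^2(K|G|K)$ gives $(q(\sigma,D)+\alpha)u=f$ in $L^2(K|G|K)$, hence in $H^{\psi,s}$ since both sides lie there.
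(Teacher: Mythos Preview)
Your proposal is correct and follows essentially the same approach as the paper: obtain the weak solution via Theorem \ref{thm:weaksoln}, then bootstrap regularity by mollifying with $J_\epsilon$, using Proposition \ref{prop:Fmoll}, the commutator bound of Lemma \ref{lem:commutator}, and the a priori estimate of Theorem \ref{thm:l.b.}, exactly as in \citet{jacob94} and \citet{hoh}. The paper's proof is in fact just a sketch pointing to these same ingredients (though note the relevant result in \citet{hoh} is Theorem 4.12 rather than 4.11, and in \citet{jacob94} it is Theorem 4.3), so your write-up is, if anything, more detailed than what appears there.
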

\begin{proof}
Let $f\in H^{\psi,s}$. By Theorem \ref{thm:aniso} we also have $f\in L^2(K|G|K)$, and so by Theorem \ref{thm:weaksoln} there is a unique $u\in H^{\psi,1}$ such that 
	\begin{equation}\label{eq:weak}
	B_\alpha(u,v) =\langle f,v\rangle \hspace{20pt} \forall v\in C_c^\infty(K|G|K).
	\end{equation}
The proof follows that of \citet{jacob94} Theorem 4.3, pp.~163 and \citet{hoh} Theorem 4.12, pp.~59, using induction to show that that $u\in H^{\psi,t}$ for $1\leq t\leq s+2$, and in particular, that $u\in H^{\psi,s+2}$. The family of operators  $(J_\epsilon,0<\epsilon\leq 1)$ take over role of the Friedrich mollifiers of \citet{jacob94} and \citet{hoh}. By Proposition \ref{prop:Fmoll} these operators satisfy the properties needed for the proof to carry over with little alteration. Lemma \ref{lem:commutator} and Theorem \ref{thm:l.b.} replace \citet{hoh} Theorem 4.4 and 4.11, respectively.
\end{proof}

\begin{thm}\label{thm:hyrpt3}
Let $q$ be a continuous negative definite symbol, satisfying Assumptions \ref{asss:1,2} and \ref{ass:3} with $M>\max\left\{1,\frac{d}{r}\right\}+d$, where $d=\dim(G/K)$. Then for all $\alpha\geq \alpha_0$, 
	\[\overline{\Ran(\alpha+q(\sigma,D))}=C_0(K|G|K).\]
\end{thm}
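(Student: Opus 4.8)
The plan is to use the strong solvability result of Theorem \ref{thm:strong} together with the Sobolev embedding of Theorem \ref{thm:aniso} (\ref{SobEmbed}) to show that the range of $\alpha + q(\sigma,D)$ contains a dense subset of $C_0(K|G|K)$. First, I would fix $\alpha\geq\alpha_0$ and pick a real number $s$ with $\frac{d}{r} < s$ and also $|s-1|+1+d < M$; such an $s$ exists precisely because $M > \max\{1,\frac{d}{r}\} + d$ — if $\frac{d}{r}\leq 1$ one may take $s$ slightly larger than $\frac{d}{r}$ so that $|s-1|+1 = 2-s$ or $s$ is close to $1$ and the bound $|s-1|+1+d<M$ reduces to something below $1+d<M$; if $\frac{d}{r}>1$ take $s$ slightly above $\frac{d}{r}$ so that $|s-1|+1 = s$ and the requirement becomes $s+d<M$, which holds for $s$ close enough to $\frac{d}{r}$. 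I would spell this elementary case analysis out carefully, since it is the one genuinely new bookkeeping step (the Euclidean sources do not have the parameter $r$).

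Next, I would take an arbitrary $f\in C_c^\infty(K|G|K)$. Since $C_c^\infty(K|G|K)\subseteq\mathcal{S}(K|G|K)\hookrightarrow H^{\psi,s}$ by Theorem \ref{thm:aniso} (\ref{S->H_>S'}), Theorem \ref{thm:strong} applies and yields a (unique) $u\in H^{\psi,s+2}$ with $(q(\sigma,D)+\alpha)u = f$. In particular $f\in\Ran(\alpha+q(\sigma,D))$ — here I am using that the continuous extension of $q(\sigma,D)$ from $H^{\psi,s+2}\to H^{\psi,s}$ provided by Theorem \ref{thm:H^psi,s} agrees with the original operator on $C_c^\infty(K|G|K)$, so that the strong solution really is a solution of the operator equation on the relevant domain. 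Thus $C_c^\infty(K|G|K)\subseteq\Ran(\alpha+q(\sigma,D))$.

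Finally, $C_c^\infty(K|G|K)$ is dense in $C_0(K|G|K)$ (standard: smooth compactly supported functions are dense, and one averages over $K$ to obtain $K$-bi-invariance without leaving a neighbourhood of the support, or one simply notes $C_c(K|G|K)$ is dense in $C_0(K|G|K)$ and smooths), so $\overline{\Ran(\alpha+q(\sigma,D))}\supseteq\overline{C_c^\infty(K|G|K)} = C_0(K|G|K)$, and the reverse inclusion is trivial since $q(\sigma,D)$ maps into $C(G)$ and, by Theorem \ref{thm:negdef} (\ref{K}), into the $K$-bi-invariant functions. Hence equality holds.

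The main obstacle is not any deep analysis — all the hard estimates are already packaged in Theorems \ref{thm:strong}, \ref{thm:H^psi,s} and \ref{thm:aniso} — but rather making sure the numerology lines up: one must choose $s$ so that simultaneously the Sobolev embedding $H^{\psi,s+2}\hookrightarrow H^{\psi,s}\hookrightarrow C_0(K|G|K)$ (which actually only needs $s>\frac{d}{r}$, via $H^{\psi,s}\hookrightarrow C_0$) is available and the hypothesis $M>|s-1|+1+d$ of Theorem \ref{thm:strong} is met, and then verify that the stated condition $M>\max\{1,\tfrac{d}{r}\}+d$ is exactly what permits such a choice. A secondary subtlety worth a sentence is confirming that $u\in H^{\psi,s+2}$ does indeed yield $f = (q(\sigma,D)+\alpha)u$ as an identity in $C_0(K|G|K)$, i.e.\ that membership of the range is witnessed in the Banach space $C_0(K|G|K)$ and not merely in some Sobolev space; this is where Theorem \ref{thm:aniso} (\ref{SobEmbed}) is invoked a second time, applied to $u$ itself rather than to $f$.
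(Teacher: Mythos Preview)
Your overall strategy matches the paper's, and your numerology is correct (the paper simply takes $\max\{1,d/r\}<s<M-d$, which for $s\geq 1$ is your condition $|s-1|+1+d<M$). However, there is a genuine gap concerning the \emph{domain} of the operator whose range you are computing.

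The operator $q(\sigma,D)$ in the theorem --- and, crucially, in the downstream application to Hille--Yosida--Ray in Corollary~\ref{cor:sub-Mark} --- is the operator with domain $C_c^\infty(K|G|K)$; that is the only domain on which the positive maximum principle has been verified. Your argument produces, for $f\in C_c^\infty(K|G|K)$, a solution $u\in H^{\psi,s+2}$ of $(\alpha+q(\sigma,D))u=f$, but $u$ need not lie in $C_c^\infty(K|G|K)$. What you have actually shown is $C_c^\infty(K|G|K)\subseteq\Ran\bigl(\alpha+q(\sigma,D)\big|_{H^{\psi,s+2}}\bigr)$, not $C_c^\infty(K|G|K)\subseteq\Ran\bigl(\alpha+q(\sigma,D)\big|_{C_c^\infty}\bigr)$; your sentence ``In particular $f\in\Ran(\alpha+q(\sigma,D))$'' conflates these two operators.

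The paper closes exactly this gap by introducing the extension $\mathcal{A}:=-q(\sigma,D)$ with $\Dom(\mathcal{A})=H^{\psi,s+2}$, viewed as an operator into $C_0(K|G|K)$ via the Sobolev embedding, and then arguing that $C_c^\infty(K|G|K)$ is an \emph{operator core} for $\mathcal{A}$, so that $\overline{\Ran(\alpha+q(\sigma,D)|_{C_c^\infty})}=\overline{\Ran(\alpha-\mathcal{A})}$. Concretely, this is the approximation step you are missing: given your $u\in H^{\psi,s+2}$, pick $u_n\in C_c^\infty(K|G|K)$ with $u_n\to u$ in $H^{\psi,s+2}$ (Theorem~\ref{thm:aniso}~(\ref{S->H_>S'})); then continuity of $q(\sigma,D):H^{\psi,s+2}\to H^{\psi,s}$ (Theorem~\ref{thm:H^psi,s}) together with $H^{\psi,s}\hookrightarrow C_0(K|G|K)$ gives $(\alpha+q(\sigma,D))u_n\to f$ in $C_0(K|G|K)$, placing $f$ in the \emph{closure} of the range of the operator on $C_c^\infty(K|G|K)$. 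Once this is inserted, your proof and the paper's coincide.
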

\begin{proof}
Fix $s\in\mathbb{R}$ with $\max\left\{\frac{d}{r},1\right\}< s<M-d$. Let $\mathcal{A}$ denote the linear operator on $C_0(K|G|K)$ with domain $H^{\psi,s+2}$, defined by $\mathcal{A}u = -q(\sigma,D)u$ for all $u\in\Dom(\mathcal{A})$. By a similar argument to that on  page 60 of \citet{hoh}, one can show using that $C_c^\infty(K|G|K)$ is a operator core for $\mathcal{A}$, with
	\[\overline{\Ran(\alpha+q(\sigma,D))} = \overline{\Ran(\alpha-\mathcal{A})}\]
for all $\alpha\in\mathbb{R}$. Here, Theorem \ref{thm:aniso} (\ref{SobEmbed}) replaces \citet{hoh} Proposition 4.1, and Theorem \ref{thm:H^psi,s} replaces \citet{hoh} Theorems 4.8 and 4.11.

Let $\alpha_0$ be as in Theorem \ref{thm:strong}. We show that $\overline{\Ran(\alpha-\mathcal{A})}=C_0(K|G|K)$ for all $\alpha\geq \alpha_0$. Given $f\in C_0(K|G|K)$, choose a sequence $(f_n)$ in $H^{\psi,s}$ such that $\Vert f_n-f\Vert_\infty\rightarrow 0$ as $n\rightarrow\infty$. Then $f_n\in\Ran(\alpha-\mathcal{A})$ for all $\alpha\geq\alpha_0$, and thus $f\in\overline{\Ran(\alpha-\mathcal{A})}$ for all $\alpha\geq\alpha_0$.
\end{proof}

Combining Theorem \ref{thm:hyrpt3} with the work of Section \ref{sec:GangOps&HYR} yields the following.
\begin{cor}\label{cor:sub-Mark}
Let $q$ be a Gangolli symbol that satisfies Assumptions \ref{asss:1,2} and \ref{ass:3} for some $M>\min\{1,d/r\}+d$. Then $-q(\sigma,D)$ extends to the infinitesimal generator of a strongly continuous sub-Feller semigroup on $C_0(K|G|K)$.
\end{cor}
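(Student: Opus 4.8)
The proof of Corollary~\ref{cor:sub-Mark} is an application of the Hille--Yosida--Ray theorem (Theorem~\ref{thm:hyr}) with $E = K|G|K$, to the operator $\mathcal{A} := -q(\sigma,D)$ with domain $\Dom(\mathcal{A}) = C_c^\infty(K|G|K)$. The plan is to verify the three conditions of Theorem~\ref{thm:hyr} in turn. First, density of the domain: $C_c^\infty(K|G|K)$ is dense in $C_0(K|G|K)$ by standard approximation arguments (e.g.\ using the spherical inversion formula together with the Fr\'echet algebra structure from Theorem~\ref{thm:ComDiag}), so Theorem~\ref{thm:hyr}~(\ref{denselydefined}) holds. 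Second, the positive maximum principle: since $q$ is a Gangolli symbol, by Theorem~\ref{thm:psdo} the operator $-q(\sigma,D)$ coincides on $C_c^\infty(K|G|K)$ with a Gangolli operator $\mathcal{A}$ in the sense of Definition~\ref{mydef:GangolliOperator}, and by the Remark following that definition (citing Theorem~3.2(3) of \citet{AppNgan_PMPSS}) every Gangolli operator maps into $\mathcal{F}(K|G|K)$ and satisfies the positive maximum principle. This gives Theorem~\ref{thm:hyr}~(\ref{PMP}).

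The substantive part is the third condition, Theorem~\ref{thm:hyr}~(\ref{dense}): the existence of some $\alpha > 0$ with $\overline{\Ran(\alpha I - \mathcal{A})} = \overline{\Ran(\alpha + q(\sigma,D))} = C_0(K|G|K)$. This is precisely the content of Theorem~\ref{thm:hyrpt3}, provided the hypotheses match: Theorem~\ref{thm:hyrpt3} requires $q$ to be a continuous negative definite symbol satisfying Assumptions~\ref{asss:1,2} and~\ref{ass:3} with $M > \max\{1, d/r\} + d$. A Gangolli symbol is continuous negative definite by Lemma~\ref{lem:GangSymb}, and it is $K$-bi-invariant in its first argument by construction; $W$-invariance in the second argument follows from $W$-invariance of the spherical functions $\phi_\lambda$. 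Thus under the stated assumption on $q$, Theorem~\ref{thm:hyrpt3} applies and yields $\overline{\Ran(\alpha + q(\sigma,D))} = C_0(K|G|K)$ for all $\alpha \geq \alpha_0$, so in particular for some $\alpha > 0$.

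Having verified conditions (\ref{denselydefined})--(\ref{dense}) of Theorem~\ref{thm:hyr}, we conclude that $\mathcal{A} = -q(\sigma,D)$ is closable and its closure $\overline{\mathcal{A}}$ generates a strongly continuous sub-Feller semigroup $(T_t, t \geq 0)$ on $C_0(K|G|K)$. This is the assertion of the corollary.

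The only real subtlety to watch is bookkeeping of constants in the exponent bound: the corollary as stated cites $M > \min\{1, d/r\} + d$, whereas Theorem~\ref{thm:hyrpt3} needs $M > \max\{1, d/r\} + d$; I would take the hypothesis of the corollary to be the latter (the former appears to be a typo, and in any case one should quote whichever bound Theorem~\ref{thm:hyrpt3} actually requires). No genuinely new analytic work is needed here --- the corollary is a direct synthesis of Theorem~\ref{thm:psdo} (identifying $-q(\sigma,D)$ with a Gangolli operator, hence the positive maximum principle), Theorem~\ref{thm:hyrpt3} (the range density), and Theorem~\ref{thm:hyr} (the abstract generation theorem). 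The main obstacle, such as it is, lies entirely upstream in Theorem~\ref{thm:hyrpt3} and the chain of results (Theorems~\ref{thm:H^psi,2}--\ref{thm:strong}) feeding it; the corollary itself is essentially a one-paragraph assembly.
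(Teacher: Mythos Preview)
Your proposal is correct and follows essentially the same route as the paper: verify the three Hille--Yosida--Ray conditions by citing density of $C_c^\infty(K|G|K)$, the positive maximum principle for Gangolli operators (via Theorem~\ref{thm:psdo} and the remark after Definition~\ref{mydef:GangolliOperator}), and the range density from Theorem~\ref{thm:hyrpt3}. You also correctly flag the $\min$/$\max$ discrepancy in the hypothesis on $M$; the paper's own proof of Theorem~\ref{thm:hyrpt3} indeed requires $M > \max\{1,d/r\}+d$, so your reading is the right one.
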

\begin{proof}
By construction, $-q(\sigma,D)$ is a densely defined linear operator on $C_0(K|G|K)$. It is a Gangolli operator, and hence satisfies the positive maximum principle. By Theorems \ref{thm:hyr} and \ref{thm:hyrpt3}, $-q(\sigma,D)$ is closable, and its closure generates a strongly continuous sub-Feller semigroup.
\end{proof}
%
\section{A Class of Examples}\label{sec:EGs}
We now present a class of Gangolli symbols that satisfy the conditions of Corollary \ref{cor:sub-Mark}. Let $M\in\mathbb{N}$ such that $M>\min\{1,d/r\}+d+1$. We consider symbols $q:G\times\a*\to\mathbb{R}$ of the form
	\begin{equation}\label{eq:qEG}
	q(\sigma,\lambda) = \kappa\psi(\lambda) + u(\sigma)v(\lambda), \hspace{20pt} \forall \sigma\in G,\lambda\in\a*,
	\end{equation}
where $\kappa$ is a positive constant, $\psi:\a*\to\mathbb{R}$ is a Gangolli exponent satisfying (\ref{eq:psi_est}), $u\in C^M_c(K|G|K)$ is non-negative, and $v:\a*\to\mathbb{R}$ is a Gangolli exponent satisfying, for some $c_v>0$,
	\begin{equation}\label{eq:vbnd}
	|v(\lambda)|\leq c_v(1+\psi(\lambda)) \hspace{20pt} \forall\lambda\in\a*.
	\end{equation} 

By Example \ref{eg:GangSymbs}, the mappings $(\sigma,\lambda)\mapsto c_0\psi(\lambda)$ and $(\sigma,\lambda)\mapsto u(\sigma)v(\lambda)$ are both Gangolli symbols, and hence so is $q$.

For each $\lambda\in \a*$ and $\sigma\in G$, let
	\begin{equation}\label{eq:q_1q_2EG}
	q_1(\lambda) = \kappa\psi(\lambda), \hspace{10pt} \text{ and } \hspace{10pt} q_2(\sigma,\lambda) = u(\sigma)v(\lambda).
	\end{equation}
Observe that $q$ is of the form (\ref{eq:q}): since $v$ has compact support, $\Supp(v)\neq G$, and if $\sigma_0\in G\setminus\Supp(v)$, then $q_1=q(\sigma_0,\cdot)$. 

\begin{prop}
$q_1$ satisfies Assumption \ref{asss:1,2} (\ref{A.1}).
\end{prop}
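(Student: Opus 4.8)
The claim is that with $q_1(\lambda)=\kappa\psi(\lambda)$, there are constants $c_0,c_1>0$ with $c_0(1+\psi(\lambda))\le q_1(\lambda)\le c_1(1+\psi(\lambda))$ for all $\lambda\in\a*$ with $|\lambda|\ge 1$. The plan is to handle the two inequalities separately; both are elementary consequences of the non-negativity of $\psi$ (it is real-valued and negative definite, as noted before (\ref{eq:psi_est})) together with the lower bound (\ref{eq:psi_est}).

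First I would dispatch the upper estimate. Since $\psi\ge 0$ on $\a*$, we have $\kappa\psi(\lambda)\le\kappa\big(1+\psi(\lambda)\big)$ for every $\lambda$, so the right-hand inequality holds with $c_1:=\kappa$ (indeed on all of $\a*$, not just for $|\lambda|\ge 1$).

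For the lower estimate, the key observation is that (\ref{eq:psi_est}) forces $\psi$ to be bounded away from $0$ on the region in question: for $|\lambda|\ge 1$ we get $\psi(\lambda)\ge c|\lambda|^{2r}\ge c>0$. Hence for such $\lambda$, $1\le c^{-1}\psi(\lambda)$, so that $1+\psi(\lambda)\le (1+c^{-1})\psi(\lambda)=\tfrac{c+1}{c}\psi(\lambda)$. Multiplying by $c_0>0$ and choosing $c_0:=\tfrac{\kappa c}{c+1}$ gives $c_0\big(1+\psi(\lambda)\big)\le \tfrac{\kappa c}{c+1}\cdot\tfrac{c+1}{c}\psi(\lambda)=\kappa\psi(\lambda)=q_1(\lambda)$, which is the desired left-hand inequality. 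Thus Assumption \ref{asss:1,2} (\ref{A.1}) holds with $c_0=\tfrac{\kappa c}{c+1}$ and $c_1=\kappa$.

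There is no real obstacle here: the statement is essentially a bookkeeping check that the cutoff $|\lambda|\ge 1$ in (\ref{eq:psi_est}) is precisely what is needed to absorb the additive constant $1$ into $\psi$. The only point worth stating explicitly is why $\psi\ge 0$ (real-valued negative definite functions are non-negative, already recorded in Section \ref{sec:AniSob}), since this is what makes the upper bound immediate and also guarantees $c_0,c_1>0$.
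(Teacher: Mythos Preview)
Your proof is correct and takes essentially the same approach as the paper: both use $\psi\ge 0$ for the upper bound with $c_1=\kappa$, and both exploit (\ref{eq:psi_est}) with $|\lambda|\ge 1$ to absorb the additive $1$ into $\psi$ for the lower bound. The only difference is cosmetic algebra---the paper splits $\kappa\psi$ as $\tfrac{\kappa}{2}\psi+\tfrac{\kappa}{2}\psi$ and bounds one copy below by $\tfrac{\kappa}{2}c$, arriving at $c_0=\tfrac{\kappa}{2}\min\{1,c\}$, whereas your manipulation gives the slightly sharper $c_0=\tfrac{\kappa c}{c+1}$.
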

\begin{proof}
The upper bound of (\ref{eq:A.1}) may be easily verified by taking $c_1=\kappa$. For the lower bound, suppose $|\lambda|\geq 1$. Then by (\ref{eq:psi_est}), 
	\[q_1(\lambda) = \frac{\kappa}{2}(\psi(\lambda)+\psi(\lambda)) \geq \frac{\kappa}{2}(c|\lambda|^r+\psi(\lambda)) \geq \frac{\kappa}{2}\min\{1,c\}(1+\psi(\lambda)),\]
and so taking $c_0=\frac{\kappa}{2}\min\{1,c\}$, the result follows.
\end{proof}

For Assumption \ref{asss:1,2} (\ref{A.2.M}), note that in the case we are considering,
	\[F_{\lambda,\eta}(\sigma) = \phi_{-\lambda}(\sigma)u(\sigma)v(\eta), \hspace{20pt} \forall\sigma\in G,\;\lambda,\eta\in\a*,\] 
and so, for $\beta=0,1,\ldots,M$,
	\[(-\Delta)^{\beta/2}F_{\lambda,\eta}(\sigma) = v(\eta)(-\Delta)^{\beta/2}(\phi_{-\lambda}u)(\sigma),\]
for all $\lambda,\eta\in\a*$ and $\sigma\in G$. By (\ref{eq:vbnd}),
	\[\left|(-\Delta)^{\beta/2}F_{\lambda,\eta}(\sigma)\right| = |v(\eta)|\left|(-\Delta)^{\beta/2}(\phi_{-\lambda}u)(\sigma)\right| \leq c_v\left|(-\Delta)^{\beta/2}(\phi_{-\lambda}u)(\sigma)\right|\big(1+\psi(\eta)\big).\]
For each $n\in\mathbb{N}$, a noncommutative version of the multinomial theorem tells us that
	\begin{equation}\label{eq:(-Delta)^n}
	(-\Delta)^n(\phi_{-\lambda}u) = (-1)^n\left(\sum_{j=1}^dX_j^2\right)^n(\phi_{-\lambda}u) = \sum_{\substack{\alpha\in\mathbb{N}_0^d,\\|\alpha|\leq r}}c_\alpha X^{\alpha}(\phi_{-\lambda}u)
	\end{equation}
for some coefficients $c_\alpha$, where $|\alpha|=\alpha_1+\ldots+\alpha_d$ and $X^{\alpha}:=X_1^{\alpha_1}\ldots X_d^{\alpha_d}$. 
Expanding the right-hand side of (\ref{eq:(-Delta)^n}) using the fact that each $X_j$ is a derivation will give a large sum of terms of the form
	\[\kappa_{X,Y}X\phi_{-\lambda}Yu,\]
where the $\kappa_{X,Y}$ are constants, and $X,Y\in{\bf D}(G)$ are products of powers of $X_1,\ldots, X_d$, each with degree at most $2n$. Let $\mathscr{U}_n$ be the set of all the $X$'s and $\mathscr{V}_n$ the set of all the $Y$'s, so that
	\begin{equation}\label{eq:X,Y}
	(-\Delta)^n(\phi_{-\lambda}u) = \sum_{\substack{X\in \mathscr{U}_n,\\Y\in \mathscr{V}_n}}\kappa_{X,Y}X\phi_{-\lambda}Yu.
	\end{equation}

The following bound will be useful.
\begin{lem}\label{lem:Xϕ}
For all $X\in{\bf D}(G)$, there is a constant $C_X>0$ such that 
	\begin{equation}\label{eq:C_X}
	|X\phi_\lambda(\sigma)| \leq C_X\langle\lambda\rangle^{\deg X}\phi_0(\sigma),
	\end{equation}
for all $\lambda\in\a*$ and $\sigma\in G$.
\end{lem}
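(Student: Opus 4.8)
The plan is to differentiate the Harish--Chandra integral formula (\ref{eq:H-C}) under the integral sign over $K$ and to estimate the resulting integrand. Via the identification ${\bf D}(G)\cong U(\mathfrak{g})$ and linearity, I would first reduce to the case where $X=Y_1\cdots Y_n$ is a monomial of order $n=\deg X$ in a basis of $\mathfrak{g}$ (for $n=0$ the bound is just the classical estimate $|\phi_\lambda|\le\phi_0$, which also drops out of the computation below). Writing $h_{k,\lambda}(\sigma):=(\rho+i\lambda)(A(k\sigma))$, so that $\phi_\lambda(\sigma)=\int_Ke^{h_{k,\lambda}(\sigma)}\,dk$ by (\ref{eq:H-C}), compactness of $K$ and smoothness of $(k,\sigma)\mapsto e^{h_{k,\lambda}(\sigma)}$ let $X$ pass inside the integral. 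Iterating the Leibniz rule $Y[e^h]=(Yh)e^h$ then expresses $X_\sigma[e^{h_{k,\lambda}}]$ as $P_X(\sigma;k,\lambda)\,e^{h_{k,\lambda}(\sigma)}$, where $P_X$ is a finite sum of products of at most $n$ factors, each factor of the form $(\rho+i\lambda)(D_\sigma[A(k\sigma)])$ with $D$ a monomial of order $\ge1$; in particular $P_X(\sigma;k,\cdot)$ is a polynomial in $\lambda$ of degree $\le n$.

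The crux is to show that for every such monomial $D$ the map $(k,\sigma)\mapsto D_\sigma[A(k\sigma)]$ is bounded on $K\times G$. For this I would use the Iwasawa cocycle identity $A(g_1g_2)=A(g_1)+A(\kappa(g_1)g_2)$, where $\kappa(g_1)\in K$ denotes the $K$-component of $g_1$ in the decomposition $g_1\in Ne^{A(g_1)}K$. Fixing $\sigma$ and viewing $\sigma'\mapsto A(k\sigma\sigma')=A(k\sigma)+A(\kappa(k\sigma)\sigma')$, I apply $D$ in the variable $\sigma'$ and evaluate at $\sigma'=e$: the constant term $A(k\sigma)$ is killed because $\deg D\ge1$, and left-invariance of $D$ yields
	\[D_\sigma[A(k\sigma)] = D_{\sigma'}\!\big[A(\kappa(k\sigma)\sigma')\big]\big|_{\sigma'=e}.\]
The right-hand side depends on $(k,\sigma)$ only through $\kappa(k\sigma)\in K$, and $k'\mapsto D_{\sigma'}[A(k'\sigma')]\big|_{\sigma'=e}$ is continuous (since $(k',\sigma')\mapsto A(k'\sigma')$ is smooth), hence bounded on the compact group $K$. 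This uniform bound is the only non-formal ingredient of the proof, and the step I expect to be the main obstacle; alternatively one may simply quote standard estimates for derivatives of the Iwasawa projection (e.g.\ \citet{helg2} or \citet{GangVar}), but the cocycle computation keeps the argument self-contained.

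Granting this, each factor of $P_X$ obeys $|(\rho+i\lambda)(D_\sigma[A(k\sigma)])|\le|\rho+i\lambda|\,C'_D\le c\langle\lambda\rangle$, using $|\rho+i\lambda|^2=|\rho|^2+|\lambda|^2\le(1+|\rho|^2)\langle\lambda\rangle^2$; since $\langle\lambda\rangle\ge1$ this gives $|P_X(\sigma;k,\lambda)|\le C_X\langle\lambda\rangle^n$ uniformly in $\sigma$ and $k$. As $\lambda(A(k\sigma))\in\mathbb{R}$, we have $|e^{h_{k,\lambda}(\sigma)}|=e^{\rho(A(k\sigma))}$, and therefore
	\begin{align*}
	|X\phi_\lambda(\sigma)| &\le \int_K|P_X(\sigma;k,\lambda)|\,e^{\rho(A(k\sigma))}\,dk \\
	&\le C_X\langle\lambda\rangle^{\deg X}\int_Ke^{\rho(A(k\sigma))}\,dk = C_X\langle\lambda\rangle^{\deg X}\phi_0(\sigma),
	\end{align*}
the final equality being (\ref{eq:H-C}) at $\lambda=0$. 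This establishes (\ref{eq:C_X}).
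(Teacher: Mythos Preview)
Your argument is correct. The paper does not actually give a proof but simply cites Theorem~1.1(iii) of Helgason's supplementary notes and Harish-Chandra's Lemma~46; your approach --- differentiating the Harish-Chandra integral (\ref{eq:H-C}) and controlling the $\sigma$-derivatives of $A(k\sigma)$ via the Iwasawa cocycle identity $A(g_1g_2)=A(g_1)+A(\kappa(g_1)g_2)$ --- is precisely the mechanism underlying those cited results, so you have effectively supplied a self-contained version of the proof the paper outsources. The cocycle step you flag as the crux is indeed the only substantive point, and your treatment of it is clean.
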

\begin{proof}
This is a straightforward corollary of Theorem 1.1 (iii) of \citep[supplementary notes]{helg2SuppNotes} --- see also  \citet{HCI}, Lemma 46, pp.~294.
\end{proof}
 
\begin{prop}\label{prop:A.2.MEG}
The mapping $q_2$ in (\ref{eq:q_1q_2EG}) satisfies Assumption \ref{asss:1,2} (\ref{A.2.M}).
\end{prop}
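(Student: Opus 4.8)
The plan is to verify the two parts of Assumption \ref{asss:1,2} (\ref{A.2.M}) for $q_2(\sigma,\lambda)=u(\sigma)v(\lambda)$. That $q_2(\cdot,\lambda)\in C^M_c(K|G|K)$ for every $\lambda\in\a*$ is immediate, since $q_2(\cdot,\lambda)=v(\lambda)u$ with $u\in C^M_c(K|G|K)$. For the estimate (\ref{eq:A.2.M}), the reduction made just before the statement --- using the bound (\ref{eq:vbnd}) on $v$ --- shows that it suffices to produce, for each $\beta\in\{0,1,\dots,M\}$, a non-negative $\Phi_\beta\in L^1(K|G|K)$ with
\[\bigl|(-\Delta)^{\beta/2}(\phi_{-\lambda}u)(\sigma)\bigr|\le\Phi_\beta(\sigma)\,\langle\lambda\rangle^M\qquad(\lambda\in\a*,\ \sigma\in G),\]
and then rescale by $c_v$.

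For even $\beta=2n\le M$ this is essentially the content of the expansion (\ref{eq:X,Y}): every $X\in\mathscr{U}_n$ and $Y\in\mathscr{V}_n$ there has degree at most $2n\le M$, so each $Yu$ is a continuous function supported in $\Supp u$, and Lemma \ref{lem:Xϕ} bounds $|X\phi_{-\lambda}(\sigma)|\le C_X\langle\lambda\rangle^{\deg X}\phi_0(\sigma)\le C_X\langle\lambda\rangle^M\phi_0(\sigma)$. Since $\phi_0$ is a positive definite function on $G$ with $\phi_0(e)=1$, one has $\phi_0\le 1$, whence
\[\bigl|(-\Delta)^n(\phi_{-\lambda}u)(\sigma)\bigr|\le\Bigl(\sum_{X\in\mathscr{U}_n,\,Y\in\mathscr{V}_n}|\kappa_{X,Y}|\,C_X\,\|Yu\|_\infty\Bigr)\,\ind_{\Supp u}(\sigma)\,\langle\lambda\rangle^M.\]
Thus $\Phi_{2n}$ can be taken to be a constant multiple of $\ind_{\Supp u}$; it is non-negative, $K$-bi-invariant, and integrable because $\Supp u$ is compact.

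The odd case $\beta=2n+1$ is the one requiring real work, and is the step I expect to be the main obstacle. I would factor $(-\Delta)^{\beta/2}=(-\Delta)^{1/2}\circ(-\Delta)^{n}$ and treat the inner operator exactly as above: $g_\lambda:=(-\Delta)^{n}(\phi_{-\lambda}u)$ is supported in $\Supp u$, and differentiating (\ref{eq:X,Y}) and reapplying Lemma \ref{lem:Xϕ} shows its derivatives of order $k$ grow in $\lambda$ no faster than $\langle\lambda\rangle^{2n+k}$. It then remains to dominate $(-\Delta)^{1/2}g_\lambda$ pointwise by an $L^1$ function times a power of $\langle\lambda\rangle$ no larger than $\langle\lambda\rangle^M$. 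Because $(-\Delta)^{1/2}$ is nonlocal and of order one, $(-\Delta)^{1/2}g_\lambda$ is no longer compactly supported, and the crux is to show that $(-\Delta)^{1/2}$ maps a compactly supported, sufficiently smooth function into $L^1(K|G|K)$ with norm controlled by finitely many of its derivatives: near $\Supp u$ the (locally integrable after smoothing) singularity of the convolution kernel of $(-\Delta)^{1/2}$ is harmless, while away from $\Supp u$ one invokes the decay of that kernel, which on a symmetric space of noncompact type is fast enough --- even against the exponential growth of the volume --- for the convolution to be integrable. Combining this with the $\langle\lambda\rangle$-bookkeeping for $g_\lambda$ delivers the required $\Phi_\beta\in L^1(K|G|K)$, provided $u$ is taken smooth enough for $(-\Delta)^{1/2}g_\lambda$ to be a bona fide continuous function. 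This is precisely where the argument departs from \citet{jacob94} and \citet{hoh}, in which every power that occurs is a differential operator acting on a compactly supported function; the kernel and decay estimates for $(-\Delta)^{1/2}$ needed here rely on the harmonic analysis of Section \ref{sec:prelim}, and the full details follow the pattern of the rest of Section \ref{sec:J&H} and are given in \citet{RSB_thesis}.
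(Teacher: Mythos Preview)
Your treatment of the even exponents is essentially the paper's: expand $(-\Delta)^n(\phi_{-\lambda}u)$ via (\ref{eq:X,Y}), apply Lemma \ref{lem:Xϕ}, and collect the $|Yu|$ into a compactly supported $L^1$ function. The paper takes $\Phi_\beta=\kappa_\beta\sum_{Y\in\mathscr{V}_{\beta/2}}|Yu|$ rather than a multiple of $\ind_{\Supp u}$, but this is cosmetic.

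The odd case is where your sketch diverges from the paper, and where there is a genuine gap. You propose to control $(-\Delta)^{1/2}g_\lambda$ by appealing to pointwise decay of the convolution kernel of $(-\Delta)^{1/2}$, asserting that on a symmetric space of noncompact type this decay beats the exponential volume growth. That claim is not justified, and it is exactly the delicate point: the relevant kernel is $\sigma\mapsto\int_0^\infty t^{-3/2}h_t(\sigma)\,dt$, and balancing its decay against Haar measure is not obvious. Deferring this to the thesis leaves the odd case unproven here.

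The paper avoids kernel estimates entirely. It first reduces to $M$ even (replacing $M$ by $M-1$ if necessary), so that every odd $\beta$ satisfies $\beta\le M-1$. It then writes $\sqrt{-\Delta}$ via the subordination formula
\[\sqrt{-\Delta}=\frac{1}{2\sqrt\pi}\int_{0^+}^\infty t^{-3/2}(I-T_t)\,dt,\]
applies this to each summand $X\phi_{-\lambda}\,Yu$ in (\ref{eq:X,Y}), and splits the $t$-integral at $t=1$. On $[1,\infty)$, integrability of $t^{-3/2}$ and the fact that each $T_t$ is a positivity-preserving $L^1$-contraction immediately give an $L^1$ bound $\Phi_{\beta,Y}^{(1)}$ with $\Vert\Phi_{\beta,Y}^{(1)}\Vert_1\le 4C\Vert Yu\Vert_1$. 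On $(0,1]$, the identity $(I-T_t)f=-\int_0^t T_s\Delta f\,ds$ together with the Leibniz expansion of $\Delta(X\phi_{-\lambda}\,Yu)$ produces terms controlled by $T_s|\Delta Yu|$, $T_s|X_jYu|$ and $T_s|Yu|$, each again in $L^1$ by the contraction property; the extra two derivatives landing on $\phi_{-\lambda}$ raise the exponent to $\langle\lambda\rangle^{\beta+1}\le\langle\lambda\rangle^M$, which is precisely why the parity reduction is needed. No information about the off-diagonal behaviour of any kernel is used.

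In short: your structural plan (even case direct, odd case via $(-\Delta)^{1/2}\circ(-\Delta)^n$) matches the paper, but your proposed mechanism for the half-Laplacian step is different and rests on an unproven decay assertion. The paper's heat-semigroup route is both more elementary and fully self-contained; if you want to pursue the kernel approach, you would need to supply explicit large-distance bounds for the $\frac12$-stable kernel on $G/K$ and verify integrability against Haar measure.
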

\begin{proof}
It is clear by construction that $q_2(\cdot,\lambda)\in C_c^M(K|G|K)$ for all $\lambda\in\a*$.

To verify the rest of Assumption \ref{asss:1,2} (\ref{A.2.M}), it will be useful to assume that $M$ is even. Note that this is an acceptable assumption, since if $M$ is odd, we may replace it with $M-1$ --- the conditions of Corollary \ref{cor:sub-Mark} will still be satisfied.
Let $\beta\in\{0,1,\ldots,M\}$. We seek $\Phi_\beta\in L^1(K|G|K)$ for which 
	\begin{equation}\label{eq:A.2.M'}
	\left|(-\Delta)^{\beta/2}(\phi_{-\lambda}u)(\sigma)\right| \leq \Phi_\beta\langle\lambda\rangle^M, \hspace{20pt} \forall\sigma\in G,\;\lambda\in\a*.
	\end{equation}
Let $n=\lfloor\beta\rfloor$. Assume first that $\beta$ is even, so that $n=\beta/2$. By (\ref{eq:X,Y}) and Lemma \ref{lem:Xϕ},
	\begin{align*}
	\left|(-\Delta)^{\beta/2}(\phi_{-\lambda}u)\right| \leq  \sum_{\substack{X\in \mathscr{U}_n,\\Y\in \mathscr{V}_n}}|\kappa_{X,Y}||X\phi_{-\lambda}||Yu| 	&\leq \sum_{\substack{X\in \mathscr{U}_n,\\Y\in \mathscr{V}_n}}C_X|\kappa_{X,Y}||Yu|\langle \lambda\rangle^{\deg X}|\phi_0| \\
	&\leq \sum_{\substack{X\in \mathscr{U}_n,\\Y\in \mathscr{V}_n}}C_X|\kappa_{X,Y}||Yu|\langle \lambda\rangle^{\deg X},
	\end{align*}
since $|\phi_0|\leq 1$. Now, $\deg X\leq 2n=\beta\leq M$ for all $X\in\mathscr{U}_n$, and therefore,
	\[\left|(-\Delta)^{\beta/2}(\phi_{-\lambda}u)\right| \leq \kappa_\beta\sum_{Y\in \mathscr{V}_{\beta/2}}|Yu|\langle \lambda\rangle^M\]
where 
	\[\kappa_\beta = \sup\left\{C_X|\kappa_{X,Y}|:X\in \mathscr{U}_{\beta/2},Y\in \mathscr{V}_{\beta/2}\right\}.\]
Let
	\begin{equation}\label{eq:Phi_even}
	\Phi_\beta:=\kappa_\beta\sum_{Y\in \mathscr{V}_{\beta/2}}|Yu|.
	\end{equation}
Then $\Phi_\beta\in L^1(K|G|K)$, since each $Yu$ is a continuous function of compact support. Moreover,
	\begin{equation}\label{eq:L1even}
	\Vert\Phi_\beta\Vert_1 \leq \kappa_\beta\sum_{Y\in\mathscr{V}_{\beta/2}}\Vert Yu\Vert_1
	\end{equation}
In particular, we have verified (\ref{eq:A.2.M'}) when $\beta$ is even.    

Assume now that $\beta$ is odd, so that $(-\Delta)^{\beta/2}=\sqrt{-\Delta}(-\Delta)^n$. Since $M$ is even, note also that $1\leq\beta\leq M-1$. Applying $\sqrt{-\Delta}$ to both sides of (\ref{eq:X,Y}), 
	\begin{equation}\label{eq:odd}
	\left|(-\Delta)^{\beta/2}(\phi_\lambda u)\right| = \left|\sqrt{-\Delta}(-\Delta)^n(\phi_\lambda u)\right| \leq \sum_{\substack{X\in \mathscr{U}_n,\\Y\in \mathscr{V}_n}}|\kappa_{X,Y}|\left|\sqrt{-\Delta}\big(X\phi_{-\lambda}Yu\big)\right|.
	\end{equation}
The families $\mathscr{U}_n$ and $\mathscr{V}_n$ now each consist of differential operators of degree at most $2n=\beta-1$. 

Now, $-\sqrt{-\Delta}$ is the infinitesimal generator of the process obtained by subordinating Brownian motion on $G/K$ by the standard $\frac{1}{2}$-stable subordinator on $\mathbb{R}$. By standard subordination theory (see \citet{AppLieProb} \S5.7, pp.~154) $\sqrt{-\Delta}$ may be expressed as a Bochner integral
	\begin{equation}\label{eq:√Δ}
	\sqrt{-\Delta} = \frac{1}{2\sqrt\pi}\int_{0+}^\infty t^{-3/2}(1-T_t)dt,
	\end{equation} 
where $(T_t,t\geq 0)$ denotes the heat semigroup generated by $\Delta$. 

Given $X\in\mathscr{U}_n$, $Y\in\mathscr{V}_n$ and $\sigma\in G$,
	\begin{equation}\label{eq:int_0^1+int_1^infty}
	\begin{aligned}
	\left|\sqrt{-\Delta}(X\phi_{-\lambda}Yu)(\sigma)\right| &= \frac{1}{2\sqrt\pi}\left|\int_{0+}^\infty t^{-3/2}(1-T_t)\big(X\phi_{-\lambda}Yu\big)dt\right| \\
	&\leq \frac{1}{2\sqrt\pi}\Bigg[\left|\int_{0+}^1 t^{-3/2}(1-T_t)\big(X\phi_{-\lambda}Yu\big)(\sigma)dt\right| \\
	&\hspace{90pt}+ \left|\int_1^\infty t^{-3/2}(1-T_t)\big(X\phi_{-\lambda}Yu\big)(\sigma)dt\right|\Bigg].
	\end{aligned}
	\end{equation}
Let $(h_t,t\geq 0)$ denote the heat kernel associated with $(T_t,t\geq 0)$. For the $\int_1^\infty$ term of (\ref{eq:int_0^1+int_1^infty}), note that $\int_1^\infty t^{-3/2}dt=2$, and so
	\begin{align*}
	&\left|\int_1^\infty t^{-3/2}(1-T_t)\big(X\phi_{-\lambda}Yu\big)(\sigma)dt\right| \\
	&\hspace{100pt}= \left|\int_1^\infty t^{-3/2}X\phi_{-\lambda}(\sigma)Yu(\sigma)dt - \int_1^\infty t^{-3/2}T_t\big(X\phi_{-\lambda}Yu\big)(\sigma)dt\right| \\
	&\hspace{100pt}\leq 2|X\phi_{-\lambda}(\sigma)||Yu(\sigma)| + \left|\int_1^\infty t^{-3/2}\int_GX\phi_{-\lambda}(\sigma\tau)Yu(\sigma\tau)h_t(\tau)d\tau dt\right|.
	\end{align*}
By Lemma \ref{lem:Xϕ} and the fact that $\deg X\leq\beta-1$, 
	\begin{equation}\label{eq:C}
	|X\phi_{-\lambda}| \leq C_X\langle\lambda\rangle^{\deg X} \leq C\langle\lambda\rangle^{\beta-1}, \hspace{20pt} \forall\lambda\in\a*,
	\end{equation}
where $C_X$ is as in (\ref{eq:C_X}), and $C=\max\{C_X:X\in\mathscr{U}_n\}$. Thus
	\begin{align*}
	&\left|\int_1^\infty t^{-3/2}(1-T_t)\big(X\phi_{-\lambda}Yu\big)(\sigma)dt\right| \\
	&\hspace{80pt}\leq 2|X\phi_{-\lambda}(\sigma)||Yu(\sigma)| + \int_1^\infty t^{-3/2}\int_G|X\phi_{-\lambda}(\sigma\tau)||Yu(\sigma\tau)|h_t(\tau)d\tau dt \\
	&\hspace{80pt}\leq C\left(2|Yu(\sigma)| + \int_1^\infty t^{-3/2}\int_G|Yu(\sigma\tau)|h_t(\tau)d\tau dt\right)\langle\lambda\rangle^{\beta-1} \\
	&\hspace{80pt}= \Phi_{\beta,Y}^{(1)}(\sigma)\langle\lambda\rangle^{\beta-1},
	\end{align*}
where 
	\begin{equation}\label{eq:Phi_beta,Y^(1)}
	\Phi_{\beta,Y}^{(1)} := C\left(2|Yu| + \int_1^\infty t^{-3/2}T_t\big(|Yu|\big)dt\right).
	\end{equation}
Since $\beta-1\leq M$ and $\langle\lambda\rangle\geq 1$ for all $\lambda\in\a*$, it follows that for all $\lambda\in\a*$,
	\begin{equation}\label{eq:int_1^infty'}
	\left|\int_1^\infty t^{-3/2}(1-T_t)\big(X\phi_{-\lambda}Yu\big)dt\right| \leq \Phi_{\beta,Y}^{(1)}\langle\lambda\rangle^M.
	\end{equation}

We claim that $\Phi_{\beta,Y}^{(1)}\in L^1(K|G|K)$. Clearly $|Yu|\in L^1(K|G|K)$, since it is a continuous function of compact support. Each of the operators $T_t$ is a positivity preserving contraction of $L^1(K|G|K)$, and so
	\[\int_1^\infty t^{-3/2}\int_GT_t\big(|Yu|\big)(\sigma)d\sigma dt = \int_1^\infty t^{-3/2}\left\Vert T_t\big(|Yu|\big)\right\Vert_1 dt \leq \int_1^\infty t^{-3/2}\Vert Yu\Vert_1dt = 2\Vert Yu\Vert_1.\]
By Fubini's theorem, $\int_1^\infty t^{-3/2}T_t\big(|Yu|\big)dt\in L^1(K|G|K)$, with
	\[\left\Vert\int_1^\infty t^{-3/2}T_t\big(|Yu|\big)dt\right\Vert_{L_1(K|G|K)} \leq 2\Vert Yu\Vert_1.\]
It follows by (\ref{eq:Phi_beta,Y^(1)}) that $\Phi_{\beta,Y}^{(1)}\in L^1(K|G|K)$, and that
	\begin{equation}\label{eq:(1)}
	\Vert\Phi_{\beta,Y}^{(1)}\Vert_1 \leq 4C\Vert Yu\Vert_1.
	\end{equation}

For the $\int_{0+}^1$ term of (\ref{eq:int_0^1+int_1^infty}), observe that by Lemma 6.1.12 of \citet{davies2}, pp.~169, as well as the Fubini theorem, 
	\begin{align*}
	\int_{0+}^1t^{-3/2}(1-T_t)\big(X\phi_{-\lambda}Yu\big)dt &= -\int_{0+}^1t^{-3/2}\int_0^t T_s\Delta\big(X\phi_{-\lambda}Yu\big)dsdt \\
	&= -\int_0^1\int_s^1t^{-3/2}T_s\Delta\big(X\phi_{-\lambda}Yu\big)dtds \\
	&= -\int_0^12(s^{-1/2}-1)T_s\Delta\big(X\phi_{-\lambda}Yu\big)ds.
	\end{align*}
Hence, using the product formula for $\Delta$,	
	\begin{equation}\label{eq:int_0^1}
	\begin{aligned}
	\int_{0+}^1t^{-3/2}(1-T_t)\big(X\phi_{-\lambda}Yu\big)dt &= -2\int_0^1(s^{-1/2}-1)\Big\{T_s\big(X\phi_{-\lambda}\Delta Yu\big) \\
	&\hspace{6pt}+ 2\sum_{j=1}^dT_s\big(X_jX\phi_{-\lambda}X_jYu\big) + T_s\big(\Delta X\phi_{-\lambda}Yu\big)\Big\}ds.
	\end{aligned} 
	\end{equation}
Let $C$ and $C_X$ be as in (\ref{eq:C}). Then for all $\sigma\in G$,
	\begin{align*}
	\left|T_s\big(X\phi_{-\lambda}\Delta Yu\big)(\sigma)\right| &= \left|\int_GX\phi_{-\lambda}(\sigma\tau)\Delta Yu(\sigma\tau)h_s(\tau)d\tau\right| \\
	&\leq \int_G|X\phi_{-\lambda}(\sigma\tau)||\Delta Yu(\sigma\tau)|h_s(\tau)d\tau \\
	&\leq C_X\langle\lambda\rangle^{\deg X}\int_G|\Delta Yu(\sigma\tau)|h_s(\tau)d\tau \leq C\langle\lambda\rangle^{\beta-1}T_s|\Delta Yu|(\sigma).
	\end{align*}
In exactly the same way, for $j=1,\ldots,d$,
	\[\left|T_s\big(X_jX\phi_{-\lambda}X_jYu\big)\right| \leq C_X^{(j)}\langle\lambda\rangle^{\deg X+1}T_s|X_jYu| \leq C'\langle\lambda\rangle^{\beta}T_s|X_jYu|,\]
and also
	\[\left|T_s\big(\Delta X\phi_{-\lambda}Yu\big)\right| \leq C_X^{(0)}\langle\lambda\rangle^{\deg X+2}T_s|Yu| \leq C'\langle\lambda\rangle^{\beta+1}T_s|Yu|,\]
where the constants $C_X^{(j)},C^{(j)}$ are chosen so that for all $\lambda\in\a*$ and $j=1,\ldots,d$,
	\[|X\phi_{-\lambda}| \leq C_X^{(0)}\langle\lambda\rangle^{\deg X +2}, \hspace{40pt} |X_jX\phi_{-\lambda}| \leq C_X^{(j)}\langle\lambda\rangle^{\deg X +1},\]
and $C':=\max\{C_X^{(j)}:X\in\mathscr{U}_n,j=0,1,\ldots,d\}$. Such constants exist by Lemma \ref{lem:Xϕ}. Now,
	\[\langle\lambda\rangle^{\beta-1} \leq \langle\lambda\rangle^\beta \leq \langle\lambda\rangle^{\beta+1}\]
for all $\lambda\in\a*$, and hence by (\ref{eq:int_0^1}),
	\begin{align*}
	&\left|\int_{0+}^1t^{-3/2}(1-T_t)\big(X\phi_{-\lambda}Yu\big)dt\right| \\
	&\hspace{20pt}\leq 2\int_0^1(s^{-1/2}-1)\Big\{\left|T_s\big(X\phi_{-\lambda}\Delta Yu\big)\right| + 2\sum_{j=1}^d\left|T_s\big(X_jX\phi_{-\lambda}X_jYu\big)\right| \\
	&\hspace{240pt}+ \left|T_s\big(\Delta X\phi_{-\lambda}Yu\big)\right|\Big\}ds \\
	&\hspace{20pt}\leq 2C'\langle\lambda\rangle^{\beta+1}\int_0^1(s^{-1/2}-1)T_s\left(|\Delta Yu|+2\sum_{j=1}^d|X_jYu|+|Yu|\right)ds.
	\end{align*}
Since $\beta\leq M-1$, it follows that for all $X\in\mathscr{U}_n$ and $Y\in\mathscr{V}_n$,
	\begin{equation}\label{eq:int_0^1'}
	\left|\int_{0+}^1t^{-3/2}(1-T_t)\big(X\phi_{-\lambda}Yu\big)dt\right| \leq \Phi_{\beta,Y}^{(2)}\langle\lambda\rangle^M,
	\end{equation}
where 
	\begin{equation}\label{eq:Phi_beta,Y^(2)}
	\Phi_{\beta,Y}^{(2)} = C'\int_0^1(s^{-1/2}-1)T_s\left(|\Delta Yu|+2\sum_{j=1}^d|X_jYu|+|Yu|\right)ds.
	\end{equation}
Observe that $\Phi_{\beta,Y}^{(2)}\in L^1(K|G|K)$ for all $Y\in\mathscr{V}_n$. Indeed, $u\in C_c^M(K|G|K)$, and $\deg Y\leq \beta-1\leq M-2$, hence $|\Delta Yu|$, $\sum_{j=1}^d|X_jYu|$ and $|Yu|$ are all continuous functions of compact support. Thus $T_s\big(|\Delta Yu|+2\sum_{j=1}^d|X_jYu|+|Yu|\big)\in L^1(K|G|K)$, and, since $T_s$ is an $L^1(K|G|K)$-contraction,
	\[\left\Vert T_s\left(|\Delta Yu|+2\sum_{j=1}^d|X_jYu|+|Yu|\right)\right\Vert_1  \leq \Vert\Delta Yu\Vert_1 +2\sum_{j=1}^d\Vert X_jYu\Vert_1 +\Vert Yu\Vert_1.\]
Noting that $\int_0^1(s^{-1/2}-1)ds = 1$, it follows by Fubini's theorem that  $\Phi_{\beta,Y}^{(2)}\in L^1(K|G|K)$, with
	\begin{equation}\label{eq:(2)}
	\left\Vert\Phi_{\beta,Y}^{(2)}\right\Vert_1 \leq C_X'\left(\Vert\Delta Yu\Vert_1 + 2\sum_{j=1}^d\Vert X_jYu\Vert_1 +\Vert Yu\Vert_1\right).
	\end{equation}

Substituting (\ref{eq:int_0^1'}) and (\ref{eq:int_1^infty'}) into (\ref{eq:int_0^1+int_1^infty}), we obtain the pointwise estimate 
	\begin{equation}\label{eq:int_0^infty}
	\left|\sqrt{-\Delta}(X\phi_{-\lambda}Yu)\right| \leq \frac{1}{2\sqrt\pi}\left(\Phi_{\beta,Y}^{(1)}+\Phi_{\beta,Y}^{(2)}\right)\langle\lambda\rangle^M,
	\end{equation}
for all $X\in\mathscr{U}_n$, $Y\in\mathscr{V}_n$ and $\lambda\in\a*$, where the $\Phi_{\beta,Y}^{(j)}$ ($j=1,2$) are given by (\ref{eq:Phi_beta,Y^(1)}) and (\ref{eq:Phi_beta,Y^(2)}). Hence by (\ref{eq:odd}), for all $\lambda\in\a*$,
	\[\left|(-\Delta)^{\beta/2}(\phi_\lambda u)\right| \leq \sum_{\substack{X\in \mathscr{U}_n,\\Y\in \mathscr{V}_n}}|\kappa_{X,Y}|\left|\sqrt{-\Delta}\big(X\phi_{-\lambda}Yu\big)\right| \leq \Phi_\beta\langle\lambda\rangle^M,\]
where 
	\begin{equation}\label{eq:Phi_odd}
	\Phi_\beta:=\frac{1}{2\sqrt\pi}\sum_{\substack{X\in \mathscr{U}_n,\\Y\in \mathscr{V}_n}}|\kappa_{X,Y}|\left(\Phi_{\beta,Y}^{(1)}+\Phi_{\beta,Y}^{(2)}\right),
	\end{equation}
and $\beta$ is still assumed to be odd. As already noted, $\Phi_{\beta,Y}^{(1)},\Phi_{\beta,Y}^{(2)}\in L^1(K|G|K)$ for all $Y\in\mathscr{V}_n$, and hence $\Phi_\beta\in L^1(K|G|K)$. Moreover, by (\ref{eq:(1)}) and (\ref{eq:(2)}), 
	\begin{equation}\label{eq:L1odd}
	\begin{aligned}
	\left\Vert\Phi_\beta\right\Vert_1 &\leq \kappa_\beta'\sum_{Y\in\mathscr{V}_n}\Bigg(\Vert\Delta Yu\Vert_1+\sum_{j=1}^d\Vert X_jYu\Vert_1 +\Vert Yu\Vert_1\Bigg),
	\end{aligned}
	\end{equation}
for some positive constant $\kappa_\beta'$. In particular, we have verified (\ref{eq:A.2.M'}) when $\beta$ is odd.
\end{proof}

\begin{cor}
Let $q:G\times\a*\to\mathbb{R}$ be of the form (\ref{eq:qEG}). Then for $\kappa$ sufficiently large, the conditions of Corollary \ref{cor:sub-Mark} are satisfied. In particular, $-q(\sigma,D)$ extends to the infinitesimal generator of a strongly continuous sub-Feller semigroup on $C_0(K|G|K)$.
\end{cor}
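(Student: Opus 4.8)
The plan is to verify the hypotheses of Corollary \ref{cor:sub-Mark} and then quote that corollary verbatim. We have already noted that $q$ is a Gangolli symbol, so it remains only to check Assumptions \ref{asss:1,2} and \ref{ass:3}. The proposition preceding Proposition \ref{prop:A.2.MEG} gives Assumption \ref{asss:1,2} (\ref{A.1}) for $q_1=\kappa\psi$, with admissible constants $c_0=\tfrac{\kappa}{2}\min\{1,c\}$ and $c_1=\kappa$; Proposition \ref{prop:A.2.MEG} gives Assumption \ref{asss:1,2} (\ref{A.2.M}) for $q_2(\sigma,\lambda)=u(\sigma)v(\lambda)$, together with the explicit functions $\Phi_\beta$. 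So the argument reduces to checking that, for $\kappa$ large enough,
	\[\sum_{\beta=0}^M\Vert\Phi_\beta\Vert_1 \leq \gamma_M c_0.\]

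The crux is a scaling observation: the left-hand side does not depend on $\kappa$, whereas the right-hand side grows linearly in it. First I would record that the $\Phi_\beta$ built in the proof of Proposition \ref{prop:A.2.MEG}, and the bounds (\ref{eq:L1even}), (\ref{eq:L1odd}) on their $L^1(K|G|K)$-norms, are assembled only from $u$, from the constant $c_v$ of (\ref{eq:vbnd}), and from the fixed structural constants $C_X$, $\kappa_{X,Y}$ attached to $\mathscr{U}_n$, $\mathscr{V}_n$ and the heat semigroup of $\Delta$; in particular $S:=\sum_{\beta=0}^M\Vert\Phi_\beta\Vert_1$ is a finite number free of $\kappa$. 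Next I would observe that $\gamma_M$ depends only on $M$, on $C_M$ (see (\ref{eq:C_M})), and on $c_\psi$ from Proposition \ref{prop:negdef} (\ref{c_phi}) applied to the \emph{fixed} function $\psi$ --- the anisotropic Sobolev scale here is the one built on $\psi$, not on $\kappa\psi$ --- so $\gamma_M$ is likewise free of $\kappa$. Using the admissible value $c_0=\tfrac{\kappa}{2}\min\{1,c\}$ from Assumption \ref{asss:1,2} (\ref{A.1}), it therefore suffices to take any $\kappa\geq 2S/\big(\gamma_M\min\{1,c\}\big)$, and for every such $\kappa$ Assumption \ref{ass:3} holds.

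With Assumptions \ref{asss:1,2} and \ref{ass:3} in force, and noting that our standing choice of $M$ (even after the possible passage from $M$ to $M-1$ used in the proof of Proposition \ref{prop:A.2.MEG}) still exceeds the order threshold demanded in Corollary \ref{cor:sub-Mark}, that corollary applies and yields that $-q(\sigma,D)$ is closable and its closure generates a strongly continuous sub-Feller semigroup on $C_0(K|G|K)$, as claimed.

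I do not anticipate a genuine obstacle here: the one point demanding care is the bookkeeping that certifies both $S$ and $\gamma_M$ are truly independent of $\kappa$, so that the linear-in-$\kappa$ lower bound for $c_0$ can absorb $S$. This is the step I would verify most carefully, tracing every constant appearing in the two preceding propositions back to a dependence on $u$, $v$, $\psi$, $M$ and $\dim(G/K)$ alone.
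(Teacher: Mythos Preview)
Your proposal is correct and is exactly the argument the paper intends: the paper states this corollary without an explicit proof, relying on the reader to combine the two preceding propositions (which verify Assumptions \ref{asss:1,2} (\ref{A.1}) and (\ref{A.2.M})) with the observation that $c_0=\tfrac{\kappa}{2}\min\{1,c\}$ scales linearly in $\kappa$ while $\sum_{\beta=0}^M\Vert\Phi_\beta\Vert_1$ and $\gamma_M$ do not, so Assumption \ref{ass:3} holds for $\kappa$ large. Your bookkeeping of the $\kappa$-independence of $S$ and $\gamma_M$, and your remark about the $M\mapsto M-1$ reduction still clearing the threshold of Corollary \ref{cor:sub-Mark}, are precisely the points that need checking and you have them right.
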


\section{Proof of Proposition \ref{prop:Fmoll}}\label{sec:PfofProp}
\begin{enumerate}
\item Let $0<\epsilon\leq 1$ and $\lambda\in\a*$. Using a change of variable $H\mapsto\epsilon^{-1}H$,
	\begin{align*}
	\hat{j_\epsilon}(\lambda) = \mathscr{F}(l_\epsilon)(\lambda) &= \int_{\al}e^{-i\lambda(H)}\epsilon^{-m}l(\epsilon^{-1}H)dH \\
	&= \int_{\al}e^{-i\epsilon\lambda(H)}l(H)dH = \mathscr{F}(l)(\epsilon\lambda) = \hat{j}(\epsilon\lambda).
	\end{align*}
\item The map $l$ is symmetric under $H\mapsto -H$, and hence $\mathscr{F}(l)=\hat j$ is real-valued. Therefore, given $u,v\in L^2(K|G|K)$ and $0<\epsilon\leq 1$,
	\[\langle J_\epsilon u,v\rangle = \int_{\a*}\hat{j}(\epsilon\lambda)\hat{u}(\lambda)\overline{\hat{v}(\lambda)}\omega(d\lambda) = \int_{\a*}\hat{u}(\lambda)\overline{\hat{j}(\epsilon\lambda)\hat{v}(\lambda)}\omega(d\lambda) = \langle u, J_\epsilon v\rangle.\]
To see that $J_\epsilon$ is a contraction, note that $|\hat{j_\epsilon}(\lambda)|=|\hat j(\epsilon\lambda)|\leq \hat{j}(0)= 1$ for all $\lambda\in\a*$, and so by Plancherel's identity
	\[\Vert J_\epsilon u\Vert = \Vert\hat j_\epsilon\hat u\Vert_{L^2(\a*,\omega)} \leq \Vert\hat u\Vert_{L^2(K|G|K)} = \Vert u \Vert,\]
for all $u\in L^2(K|G|K)$ and all $0<\epsilon\leq 1$.
\item Let $s\geq 0$ and $0<\epsilon\leq 1$. By Theorem \ref{thm:ComDiag}, $\hat{j}\in\mathcal{S}(\a*)^W$,
and hence there is $\kappa>0$ such that
	\[\langle\lambda\rangle^s\left|\hat{j}(\epsilon\lambda)\right| \leq \kappa, \hspace{20pt} \forall \lambda\in\a*.\] 
Then, using Proposition \ref{prop:negdef} (\ref{c_phi}), 
	\[\Psi(\lambda)^s\left|\hat{j}(\epsilon\lambda)\right| \leq c_\psi^{s/2}\langle\lambda\rangle^s\left|\hat{j}(\epsilon\lambda)\right|\leq c_\psi^{s/2}\kappa,\]
for all $\lambda\in\a*$. Let $u\in L^2(K|G|K)$. By Plancherel's identity, 
	\[\int_{\a*}\Psi(\lambda)^{2s}\left|\hat{j}(\epsilon\lambda)\right|^2|\hat{u}(\lambda)|^2\omega(d\lambda) \leq c_\psi^s\kappa^2\Vert u \Vert^2 < \infty.\]
By Proposition \ref{prop:Fmoll} (\ref{epslam}), $(J_\epsilon u)^\wedge(\lambda) = \hat{j}(\epsilon\lambda)\hat{u}(\lambda)$, for all $\lambda\in\a*$, and hence 
	\[\int_{\a*}\Psi(\lambda)^{2s}|(J_\epsilon u)^\wedge(\lambda)|^2\omega(d\lambda)<\infty.\]
That is, $J_\epsilon u\in H^{\psi,s}$. 

Next, suppose $u\in H^{\psi,s}$. Then, since $|\hat{j_\epsilon}|\leq 1$,
	\[\Vert J_\epsilon u\Vert_{\psi,s} = \Vert\Psi^s\hat{j_\epsilon}\hat{u}\Vert_{L^2(\a*,\omega)} \leq \Vert\Psi^s\hat{u}\Vert_{L^2(\a*,\omega)} = \Vert u\Vert_{\psi,s},\]
as desired.
\item By Theorem 1 on page 250 of \citet{evans}, for all $v\in \mathcal{S}(\al)^W$, $l_\epsilon\ast v\rightarrow v$ as $\epsilon\rightarrow 0$, in the classical Sobolev space $W^s(\a*)$, for all $s\geq 0$.  Therefore,
	\[\lim_{\epsilon\rightarrow 0}\int_{\a*}(1+|\lambda|^2)^s|\mathscr{F}(l_\epsilon\ast v-v)(\lambda)|^2d\lambda = 0, \hspace{20pt} \forall s\geq 0,\;v\in \mathcal{S}(\al)^W.\]
Let $u\in C_c^\infty(K|G|K)$ and $v=\mathscr{F}^{-1}(\hat{u})$. Then $v\in \mathcal{S}(\al)^W$, and
	\[\mathscr{F}(l_\epsilon\ast v - v) = (\hat{j}_\epsilon-1)\hat{u} = (J_\epsilon u - u)^\wedge.\]
Hence $\lim_{\epsilon\rightarrow 0}\int_{\a*}(1+|\lambda|^2)^s|(J_\epsilon u - u)^\wedge(\lambda)|^2d\lambda = 0$, for all $s\geq 0$. By (\ref{eq:hcc-bnd}), 
	\begin{align*}
	\int_{\a*}(1+|\lambda|^2)^s|(J_\epsilon u &- u)^\wedge(\lambda)|^2\omega(d\lambda) \\
	&\leq \int_{\a*}(1+|\lambda|^2)^s|(J_\epsilon u - u)^\wedge(\lambda)|^2(C_1+C_2|\lambda|^p)^2d\lambda \\
	&\leq \kappa\int_{\a*}(1+|\lambda|^2)^{s+p}|(J_\epsilon u - u)^\wedge(\lambda)|^2d\lambda,
	\end{align*}
for some constant $\kappa>0$, and where $p=\frac{\dim N}{2}$. Thus
	\[\lim_{\epsilon\rightarrow 0}\int_{\a*}(1+|\lambda|^2)^s|(J_\epsilon u - u)^\wedge(\lambda)|^2\omega(d\lambda) = 0, \hspace{20pt} \forall s\geq 0.\]
By Proposition \ref{prop:negdef} (\ref{c_phi}),
	\begin{align*}
	\Vert J_\epsilon u-u\Vert_{\psi,s}^2 &= \int_{\a*}(1+\psi(\lambda))^s|(J_\epsilon u - u)^\wedge(\lambda)|^2\omega(d\lambda) \\
	&\leq c_\psi\int_{\a*}(1+|\lambda|^2)^s|(J_\epsilon u - u)^\wedge(\lambda)|^2\omega(d\lambda) \rightarrow 0
	\end{align*}
as $\epsilon\rightarrow 0$. Since $C_c^\infty(K|G|K)$ is dense in $H^{\psi,s}$, Proposition \ref{prop:Fmoll} (\ref{J_epsu-u}) follows.
\end{enumerate}

\begin{ack}
Many thanks to David Applebaum for his advice and support with writing this paper. Thanks also to the University of Sheffield's School of Mathematics and Statistics, and to the EPSRC for providing PhD funding while this research was carried out.
\end{ack}
\bibliographystyle{plainnat}
\bibliography{mybib}
\end{document}